\documentclass[12pt,a4paper,intlimits,sumlimits]{amsart}

\usepackage{amsmath, amsthm, mathtools}
\usepackage{amsfonts}
\usepackage[alphabetic]{amsrefs}
\usepackage{graphicx}
\usepackage{framed}
\usepackage{amssymb}
\usepackage{esvect}
\usepackage{xcolor}
\usepackage{eucal}
\usepackage{float}
%

\usepackage[normalem]{ulem}

\usepackage{hyperref}
\usepackage[english]{babel}

\def \N {\mathbb{N}}
\def \R {\mathbb{R}}
\def \Z {\mathbb{Z}}

\newcommand{\eps}{\varepsilon}
\newcommand{\Ap}[1][]{A_p\, #1}
\newcommand{\Bp}[1][]{B_p\, #1}

\newcommand{\dnorm}[2][]{\left\|#2\right\|_{#1}^*}
\newcommand{\dualp}[3][]{\left(#2,#3\right)_{#1}}

\newcommand{\norm}[2][]{\left\|#2\right\|_{#1}}
\renewcommand{\o}{\text{o}}
\newcommand{\PS}[1]{$(\text{PS})_{#1}$}
\newcommand{\pnorm}[2][]{\if #1'' \left|#2\right|_p \else \left|#2\right|_{#1} \fi}

\newcommand{\set}[1]{\left\{#1\right\}}

\newcommand\redsout{\bgroup\markoverwith{\textcolor{red}{\rule[0.5ex]{2pt}{0.4pt}}}\ULon}

\theoremstyle{definition}
\newtheorem{definition}{Definition}[section]

\theoremstyle{plain}
\newtheorem{theorem}[definition]{Theorem}
\newtheorem{proposition}[definition]{Proposition}
\newtheorem{lemma}[definition]{Lemma}
\newtheorem{corollary}[definition]{Corollary}

\numberwithin{equation}{section}

\usepackage{geometry}
\geometry{
 a4paper,
 total={170mm,257mm},
 left=20mm,
 top=20mm,
 }

%
%
%

\renewcommand{\leq}{\leqslant}
\renewcommand{\le}{\leqslant}
\renewcommand{\geq}{\geqslant}
\renewcommand{\ge}{\geqslant}

\allowdisplaybreaks

 \title[Nonlinear superposition operators of mixed fractional order]{An existence theory \\ for
nonlinear superposition operators \\ of mixed fractional order}
 
\author{Serena Dipierro}
\address{Serena Dipierro: Department of Mathematics and Statistics, The University of Western Australia, 35 Stirling Highway, Crawley, Perth, WA 6009, Australia}
\email{serena.dipierro@uwa.edu.au}

\author{Kanishka Perera}
\address{Kanishka Perera: Department of Mathematical Sciences, Florida Institute of Technology, 150 W University Blvd, Melbourne, FL 32901-6975, USA}
\email{kperera@fit.edu}

\author{Caterina Sportelli}
\address{Caterina Sportelli: Department of Mathematics and Statistics, The University of Western Australia, 35 Stirling Highway, Crawley, Perth, WA 6009, Australia}
\email{caterina.sportelli@uwa.edu.au}

\author{Enrico Valdinoci}
\address{Enrico Valdinoci: Department of Mathematics and Statistics, The University of Western Australia, 35 Stirling Highway, Crawley, Perth, WA 6009, Australia}
\email{enrico.valdinoci@uwa.edu.au}
 
 \date{}

\makeindex

\begin{document}
 \maketitle

\begin{abstract}
We establish the existence of multiple solutions for a nonlinear problem of critical type.

The problem considered is fractional in nature, since it is obtained by the superposition of~$(s,p)$-fractional Laplacians of different orders.

The results obtained are new even in the case of the sum of two different fractional~$p$-Laplacians, or the sum of a fractional~$p$-Laplacian and a classical~$p$-Laplacian, but our framework is general enough to address also the sum of finitely, or even infinitely many, operators.

In fact, we can also consider the superposition of a continuum of operators, modulated by a general signed measure on the fractional exponents. When this measure is not positive, the contributions of the individual operators to the whole superposition operator is allowed to change sign. In this situation, our structural assumption is that the positive measure on the higher fractional exponents dominates the rest of the signed measure.
\end{abstract}

\tableofcontents{}
 
\section{Introduction}

\subsection{Nonlinear nonlocal operators and existence theory}\label{AB1}

The goal of this article is to construct solutions for an operator obtained through the superposition
of possibly nonlinear, fractional operators. The results obtained will be very general, but they are also new in a number of specific interesting cases which will be obtained as a simple byproduct of our comprehensive approach.

We consider two finite (Borel) measures~$\mu^+$ and~$\mu^-$ on the interval of fractional exponents~$[0,1]$
satisfying, for some~$\overline s\in ({{ 0 }}, 1]$ and~$\gamma\ge0$, that
\begin{equation}\label{mu00}
{\mu^+}([\overline s, 1])>0,
\end{equation}
\begin{equation}\label{mu3}
{\mu^-}_{\big|_{[\overline s, 1]}}=0
\end{equation}
and
\begin{equation}\label{mu2}
\mu^-\big([{{ 0 }}, \overline s]\big)\le\gamma\,
\mu^+\big([\overline s, 1]\big).
\end{equation}
We consider the signed measure
$$ \mu:=\mu^+-\mu^-$$
and the nonlinear fractional operator
\[ A_{\mu,p}u:=\int_{[0, 1]} (-\Delta)_p^{s} u \, d\mu(s).\]
Notice that the above integration may occur with ``variable signs'', since~$\mu$ is a signed measure
with possibly negative components induced by the measure~$\mu^-$: however, conditions~\eqref{mu00} and~\eqref{mu3} say that these negative components do not affect the higher values of fractional exponents and, in fact, by condition~\eqref{mu2},
the measure of these higher fractional exponents suitably dominate the negative contribution.

This setting has been introduced in~\cite{CATERINA} for the case of linear fractional operators (corresponding to~$p=2$). Here, we can also allow a nonlinear nonlocal structure of the operator, hence we assume that~$p\in(1,+\infty)$.

In our setting, when~$s\in(0,1)$ the fractional~$p$-Laplacian~$(- \Delta)_p^s$ is the nonlinear nonlocal operator defined on smooth functions by
\[
(- \Delta)_p^s\, u(x) := 2c_{N,s,p} \,\lim_{\eps \searrow 0} \int_{\R^N \setminus B_\eps(x)} \frac{|u(x) - u(y)|^{p-2}\, (u(x) - u(y))}{|x - y|^{N+sp}}\, dy.
\]
See e.g.~\cite{MR3593528, MR3861716} and the references therein for more information on the fractional~$p$-Laplace operator.
The exact value of the positive normalizing constant~$c_{N,s,p}$ is not important\footnote{However, one can take
$$ c_{N,s,p}:=\frac{s\,2^{2s-1}\,\Gamma\left(\frac{ps+p+N-2}{2}\right)}{\pi^{N/2}\,\Gamma(1-s)},$$
see e.g.~\cite[page~130]{MR3473114} and the references therein.} for us, except for allowing
a consistent setting for the limit cases, namely that
\begin{eqnarray*}&&\lim_{s\searrow0}(- \Delta)_p^s\, u=(- \Delta)_p^0\, u:=u\\{\mbox{and}}\qquad&&
\lim_{s\nearrow1}(- \Delta)_p^s\, u=(- \Delta)_p^1\, u:=-\Delta_pu=-{\operatorname{div}}(|\nabla u|^{p-2}\nabla u).\end{eqnarray*}\medskip

To develop our analysis, we consider a bounded open set~$\Omega\subset\R^N$
and rely on the spectral theory for the operator~$A_{\mu,p}$. For this, one looks
at the values of~$\lambda$ allowing for nontrivial solutions of the equation
\begin{equation}\label{EUGE} \begin{cases}A_{\mu,p}u=\lambda |u|^{p-2}u &{\mbox{ in }}\Omega,\\u=0&{\mbox{ in }}\R^N\setminus\Omega.\end{cases}\end{equation}
The cohomological index theory by Fadell and Rabinowitz (put forth in~\cite{MR0478189}
and whose essential ingredients will also be recalled in the forthcoming Section~\ref{AB2})
addresses the Dirichlet eigenvalue theory related to equation~\eqref{EUGE},
provides the existence of a sequence~$\lambda_l$ of eigenvalues, with~$0<\lambda_1\le\lambda_2\le\dots$,
and will constitute an important building block for our existence theory.
\medskip

Given the superposition nature of the operator, there is another fractional exponent which may play a critical role.
To appreciate this, we observe that, by assumption~\eqref{mu00}, 
there exists~$s_\sharp\in [\overline s, 1]$ such that
\begin{equation}\label{scritico}
\mu^+ ([s_\sharp, 1])> 0.
\end{equation}
We will see below that the exponent~$s_\sharp$ plays also the role of a critical exponent
(and we remark that, while some arbitrariness is allowed in the choice of~$s_\sharp$ here above,
the results obtained will be stronger if one picks~$s_\sharp$ ``as large as possible''
but still fulfilling~\eqref{scritico}).

Specifically, we will look at the equation
\begin{equation} \label{mainp}
\left\{\begin{aligned}
A_{\mu,p}u& = \lambda\, |u|^{p-2}\, u + |u|^{p_{s_\sharp}^* - 2}\, u && \text{in } \Omega,\\
u & = 0 && \text{in } \R^N \setminus \Omega.
\end{aligned}\right.
\end{equation}
Notice that if a nontrivial solution~$u$ exists, then~$-u$ is a nontrivial solutions as well (hence, nontrivial solutions
go automatically in pairs).

The exponent~$p_{s_\sharp}^*$ is the fractional critical exponent related to 
the fractional parameter~$s_\sharp$ for which~\eqref{scritico} holds true, namely
$$ p_{s_\sharp}^*:=\frac{Np}{N-s_\sharp p}\,,$$
under the condition\footnote{{F}rom now on, this condition will always be assumed, to allow us to write critical exponents
in our setting; this condition can also be relaxed by taking~$N>s_\star p$, being~$s_\star$ the supremum of the support of~$\mu$.}
that~$N>p$.
\medskip

We also introduce the notion of Sobolev constant suitable for our framework. Namely,
we set
$$ [u]_{s,p}:=\begin{dcases}
\|u\|_{L^p(\R^N)} & {\mbox{ if }}s=0,\\
\displaystyle\left( c_{N,s,p}\iint_{\R^{2N}}\frac{|u(x)-u(y)|^p}{|x-y|^{N+sp}}\,dx\,dy\right)^{1/p}
& {\mbox{ if }}s\in(0,1),\\
\|\nabla u\|_{L^p(\R^N)} & {\mbox{ if }}s=1.
\end{dcases}$$
Thanks to the normalizing constant~$c_{N,s,p}$ we have that
$$ \lim_{s\searrow0}[u]_{s,p}=[u]_{0,p}\qquad{\mbox{and}}\qquad
\lim_{s\nearrow1}[u]_{s,p}=[u]_{1,p}.$$
The Sobolev constant that we consider is thus defined as
\begin{equation}\label{DESOCO} {\mathcal S(p)}:=\inf \int_{[0,1]}[u]_{s,p}^p \,d\mu^+(s),\end{equation}
where the infimum is taken over all the measurable functions with~$\|u\|_{L^{p^*_{s_\sharp}}(\R^N)}=1$.
\medskip

With this, our main result goes as follows:

\begin{theorem} \label{mainp1} 
Let~$\mu=\mu^+-\mu^-$ with~$\mu^+$ and~$\mu^-$ satisfying~\eqref{mu00}, \eqref{mu3} and~\eqref{mu2}. Let~$s_\sharp$ be the exponent given by~\eqref{scritico}.

Assume that
\begin{equation} \label{LLP1}
\lambda_l - \frac{\mathcal S(p)}{|\Omega|^{(s_\sharp p)/N}} < \lambda < \lambda_l = \dots = \lambda_{l+m-1}
\end{equation}
for some~$l, m \ge 1$.

Then, there exists~$\gamma_0>0$ depending only on~$N$, $\Omega$, $p$, $s_\sharp$, $\mu$, $\lambda$ and~$l$ such that
if~$\gamma\in[0,\gamma_0]$,
problem~\eqref{mainp} has~$m$ distinct pairs of nontrivial solutions~$\pm u^\lambda_1,\dots,\pm u^\lambda_m$.
\end{theorem}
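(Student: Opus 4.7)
The plan is to cast problem \eqref{mainp} variationally and apply an abstract critical point theorem based on the Fadell--Rabinowitz cohomological index. Working in the natural energy space~$X$ associated with~$\mu^+$ (functions vanishing outside~$\Omega$ with finite weighted Gagliardo seminorm), the associated functional is
\[
I_\lambda(u) = \frac{1}{p}\int_{[0,1]}[u]_{s,p}^p\,d\mu(s) - \frac{\lambda}{p}\int_\Omega |u|^p\,dx - \frac{1}{p^*_{s_\sharp}}\int_\Omega |u|^{p^*_{s_\sharp}}\,dx.
\]
Using \eqref{mu3} together with \eqref{mu2}, the quadratic-type part reduces, up to an error of order~$\gamma$, to the definite $\mu^+$-quantity. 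For~$\gamma$ sufficiently small this yields a well-defined, even $C^1$ functional on~$X$, whose critical points are precisely the weak solutions of \eqref{mainp}.

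The first substantive step is a local Palais--Smale condition. By an argument patterned on the classical Brezis--Nirenberg analysis for the fractional $p$-Laplacian, I would show that $I_\lambda$ satisfies $(\text{PS})_c$ for every
\[
c \;<\; c^* \;:=\; \frac{s_\sharp}{N}\,\mathcal{S}(p)^{N/(s_\sharp p)},
\]
the threshold being dictated by the bubble energy at the critical exponent~$p^*_{s_\sharp}$. A bounded $(\text{PS})_c$ sequence is obtained from the structural condition \eqref{mu2} (with~$\gamma$ small) by testing~$I_\lambda'(u_n)$ against~$u_n$; concentration--compactness for fractional~$p$-Laplacians then rules out energy loss below~$c^*$.

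The second step is the abstract min--max. The spectral theory recalled before the statement yields subspaces $W_l \supset W_{l-1}$ associated with $\lambda_1,\dots,\lambda_{l-1}$ and their cohomological-index complement $Z_l$. Because $\lambda_l = \dots = \lambda_{l+m-1}$, the index-theoretic jump has ``multiplicity'' at least~$m$. Applying the Fadell--Rabinowitz pseudo-index theorem of~\cite{MR0478189} to the symmetric functional~$I_\lambda$, one obtains $m$ critical values
\[
c_1\le c_2\le \dots \le c_m \;=\; \inf_{A\in \Gamma_m}\;\sup_{u\in A}\, I_\lambda(u),
\]
for a suitable family $\Gamma_j$ of symmetric sets of pseudo-index $\ge j$, provided each~$c_j$ lies in $(0,c^*)$.

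The decisive estimate, and the main obstacle, is showing $c_m < c^*$. For this I would take the standard truncated Sobolev minimizer~$U_\varepsilon$ centered at a point of~$\Omega$ and concentrate it at scale~$\varepsilon\to 0$, combining it with elements of the $l$-dimensional spectral subspace~$W_l$ to form the test class. The computation then splits into: (a) the Sobolev contribution at exponent~$s_\sharp$, which gives~$(s_\sharp/N)\mathcal{S}(p)^{N/(s_\sharp p)}$ plus lower order; (b) the $L^p$ term, which produces the crucial saving~$-\lambda\,|\Omega|^{-(s_\sharp p)/N}/p$ times~$\|U_\varepsilon\|_{L^p}^p$ and whose competition with the linear part on~$W_l$ is precisely governed by the Brezis--Nirenberg condition~\eqref{LLP1}; (c) the terms $\int_{[0,1]}[\,\cdot\,]_{s,p}^p\,d\mu(s)$ with $s\ne s_\sharp$, which must be shown to be of lower order under the concentration scaling. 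The contributions from the signed measure~$\mu^-$, localized in~$[0,\overline s]$ by \eqref{mu3}, are absorbed using the smallness of~$\gamma\le\gamma_0$ supplied by \eqref{mu2}; this is where~$\gamma_0$ is fixed in terms of~$N$, $\Omega$, $p$, $s_\sharp$, $\mu$, $\lambda$ and~$l$. Combining (a)--(c) yields, for $\varepsilon$ small, the strict inequality $\sup_{\Gamma_m} I_\lambda < c^*$, completing the proof.
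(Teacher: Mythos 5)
Your overall framework (variational formulation, local Palais--Smale, min--max via the Fadell--Rabinowitz pseudo-index) matches the paper's, but the decisive estimate $c_m<c^*$ in your step (c) contains a genuine gap. You assert that under the concentration scaling $U_\varepsilon(x)=\varepsilon^{-(N-s_\sharp p)/p}\,U((x-x_0)/\varepsilon)$ the contributions $\int_{[0,1]}[U_\varepsilon]_{s,p}^p\,d\mu^+(s)$ for $s\neq s_\sharp$ are lower order. That is true for $s<s_\sharp$, where $[U_\varepsilon]_{s,p}^p\sim\varepsilon^{(s_\sharp-s)p}\to 0$, but it is false for $s>s_\sharp$, where the same scaling gives $[U_\varepsilon]_{s,p}^p\sim\varepsilon^{(s_\sharp-s)p}\to\infty$. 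Condition~\eqref{scritico} only requires $\mu^+([s_\sharp,1])>0$; in many of the cases treated here (for example~$\mu^+=\delta_1+\delta_s$ with~$s<1$, or any~$\mu^+$ with a continuous density near the top of its support) there is mass strictly above~$s_\sharp$, and then your test class has unboundedly large energy as~$\varepsilon\to 0$, so the inequality $\sup I_\lambda<c^*$ cannot come out of the bubble expansion. A further, independent difficulty is that explicit Sobolev optimizers for the fractional $p$-Laplacian with~$p\neq 2$ are not known, so the ``standard truncated Sobolev minimizer'' is not available in closed form.

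The paper sidesteps concentration entirely. The min--max upper bound comes from the purely algebraic condition~$(\mathcal F_2)$, namely $F(u)\geq\frac{\beta}{q}\big(pJ_p(u)\big)^{q/p}$ with $q=p^*_{s_\sharp}$ and $\beta=|\Omega|^{-(p^*_{s_\sharp}-p)/p}$ obtained from the H\"older inequality on~$\Omega$; the estimate $\sup_X E<c^*$ then follows from a one-variable maximization along dilations $t\mapsto E(tu)$ in Theorem~\ref{Theorem 2.9}, with no explicit test function constructed. The Palais--Smale threshold is obtained via the Br\'ezis--Lieb identity (Lemma~\ref{Brezis-Lieb} and Proposition~\ref{PSP}), not concentration-compactness, and the correct value is $c^*=\frac{s_\sharp}{N}\big((1-\theta_0)\mathcal S(p)\big)^{N/(s_\sharp p)}$, with a factor~$(1-\theta_0)<1$ that you omit; that slack is exactly what absorbs the $\gamma$-dependent contribution from~$\mu^-$ and produces the threshold~$\gamma_0$. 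Finally, the uniform Sobolev embeddings of Section~\ref{SOBOLEV} are an essential ingredient (used to dominate the $\mu^-$ part in Proposition~\ref{absorb} and to obtain compactness of~$B_p$ and continuity of~$f$ and~$L_p$) that your outline does not supply.
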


As an immediate consequence of this result, taking~$m:=1$ here above, we obtain:

\begin{corollary} 
Let~$\mu=\mu^+-\mu^-$ with~$\mu^+$ and~$\mu^-$ satisfying~\eqref{mu00}, \eqref{mu3} and~\eqref{mu2}. Let~$s_\sharp$ be the exponent given by~\eqref{scritico}.

Assume that, for some~$l\ge 1$,
\begin{equation*}
\lambda_l - \frac{\mathcal S(p)}{|\Omega|^{(s_\sharp p)/N}} < \lambda < \lambda_l .
\end{equation*}

Then, there exists~$\gamma_0>0$ depending only on~$N$, $\Omega$, $p$, $s_\sharp$, $\mu$, $\lambda$ and~$l$ such that
if~$\gamma\in[0,\gamma_0]$,
problem~\eqref{mainp} has a nontrivial solution
(and therefore a pair of nontrivial solutions, one equal to the other up to a minus sign).
\end{corollary}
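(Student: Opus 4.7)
The corollary is the case $m=1$ of Theorem~\ref{mainp1}, in which the cluster equality $\lambda_l = \dots = \lambda_{l+m-1}$ becomes tautological. Hence my plan reduces to invoking the theorem with this choice, producing the single pair $\pm u_1^\lambda$ as required. The real substance therefore lies in Theorem~\ref{mainp1} itself, and I sketch the strategy I would adopt for that.

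The approach is variational. On a suitable Banach space $X$ of measurable functions vanishing outside $\Omega$, whose norm is equivalent to $\left(\int_{[0,1]}[u]_{s,p}^p\, d\mu^+(s)\right)^{1/p}$, I would introduce the even $C^1$ functional
\begin{equation*}
\Phi_\lambda(u) := \frac{1}{p}\int_{[0,1]}[u]_{s,p}^p\, d\mu(s) - \frac{\lambda}{p}\int_\Omega |u|^p\, dx - \frac{1}{p^*_{s_\sharp}}\int_\Omega |u|^{p^*_{s_\sharp}}\, dx,
\end{equation*}
whose critical points are the weak solutions of~\eqref{mainp}. For $\gamma$ small, condition~\eqref{mu2} absorbs the $\mu^-$-contribution into the $\mu^+$-part supported on $[\overline s,1]$, so that the quadratic form $\int_{[0,1]}[u]_{s,p}^p\, d\mu(s)$ is comparable to the $p$-th power of the $X$-norm and $\Phi_\lambda$ has the expected linking geometry inherited from the eigenvalue picture.

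Next, using the Fadell--Rabinowitz cohomological index $i$ recalled in Section~\ref{AB2}, I would define the minimax values
\begin{equation*}
c_j := \inf_{A\in \mathcal F_j} \sup_{u\in A} \Phi_\lambda(u),\qquad j=1,\dots,m,
\end{equation*}
where $\mathcal F_j$ is a class of closed symmetric subsets of the relevant sphere whose index is at least $l+j-1$, linked to a reference set of index at most $l-1$ modelled on the span of the eigenfunctions associated with $\lambda_1,\dots,\lambda_{l-1}$. Standard index-theoretic intersection arguments yield the ordering $c_1\le\dots\le c_m$, and the cluster equality $\lambda_l = \dots = \lambda_{l+m-1}$ prevents these values from collapsing, so each $c_j$ that is critical supplies a distinct pair $\pm u_j^\lambda$.

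The main obstacle is compactness. The functional $\Phi_\lambda$ will satisfy a Palais--Smale condition at a level $c$ only for $c<c^* := (s_\sharp/N)\,\mathcal S(p)^{N/(s_\sharp p)}$, the threshold below which concentration at the critical exponent $p^*_{s_\sharp}$ is ruled out; this has to be verified by a concentration-compactness analysis tailored to the superposition operator, with constants controlled uniformly in $\mu$. The decisive step is then the strict upper bound $c_m<c^*$. I would achieve this by plugging into the minimax family test maps built as sums of functions from the eigenspace of $\lambda_l$ with a rescaled and suitably truncated extremal for~\eqref{DESOCO} concentrating at a point of $\Omega$: the eigenspace contribution produces a gain of order $(\lambda_l-\lambda)\,\|u\|_p^p/p$, while the concentrating bubble contributes at most $c^*$ up to vanishing terms in the concentration parameter. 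The hypothesis $\lambda>\lambda_l-\mathcal S(p)/|\Omega|^{(s_\sharp p)/N}$ is calibrated precisely so that the former gain dominates the latter loss. Finally, choosing $\gamma_0$ small keeps all the constants in the preceding steps stable under the signed perturbation by $\mu^-$, so that both the strict inequalities and the Palais--Smale compactness persist and yield the $m$ pairs $\pm u^\lambda_1,\dots,\pm u^\lambda_m$. The hardest and most delicate step is the energy estimate proving $c_m<c^*$, since it must combine the bubble asymptotics with the eigenvalue gap in the nonlocal superposition framework, uniformly in $\gamma$.
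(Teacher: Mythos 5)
Your proof of the corollary itself is exactly the paper's: the statement follows by invoking Theorem~\ref{mainp1} with $m=1$, under which the cluster condition in~\eqref{LLP1} collapses to the single inequality $\lambda_l - \mathcal S(p)/|\Omega|^{(s_\sharp p)/N}<\lambda<\lambda_l$, and the theorem returns one pair $\pm u_1^\lambda$. That one-line reduction is the entire content of the paper's proof, so you have matched it.

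Where your proposal genuinely parts ways with the paper is in the sketch you offer for Theorem~\ref{mainp1} itself, specifically at the step $\sup_X E<c^*$. You propose a Brezis--Nirenberg-style concentration-compactness estimate: build test maps from eigenfunctions of $\lambda_l$ plus a rescaled, truncated extremal of~\eqref{DESOCO} concentrating at a point, and balance the gain $(\lambda_l-\lambda)\|u\|_p^p/p$ against the bubble energy. The paper instead treats this estimate purely algebraically through the abstract hypothesis $(\mathcal F_2)$: one checks via H\"older that $F(u)\ge \frac{\beta}{q}(pJ_p(u))^{q/p}$ with $q=p^*_{s_\sharp}$ and $\beta=|\Omega|^{-(p^*_{s_\sharp}-p)/p}$, deduces $E(tRu)\le \frac{s^p}{p}(\lambda_l-\lambda)-\frac{\beta s^q}{q}$ for $s=tR/\lambda_l^{1/p}$, maximizes this elementary one-variable function, and invokes~\eqref{2.11} to conclude $\sup_X E\le \left(\frac1p-\frac1q\right)\!\left(\frac{\lambda_l-\lambda}{\beta^{p/q}}\right)^{q/(q-p)}<c^*$. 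The Sobolev constant $\mathcal S(p)$ enters only through the Palais--Smale threshold in Proposition~\ref{PSP} (where, incidentally, the paper uses $c^*=\frac{s_\sharp}{N}((1-\theta_0)\mathcal S(p))^{N/(s_\sharp p)}$ rather than your stated $\frac{s_\sharp}{N}\mathcal S(p)^{N/(s_\sharp p)}$, the factor $(1-\theta_0)$ being what later absorbs the $\gamma$-dependence). The paper's route buys something real: it avoids any need to identify, truncate, or expand extremizers of the mixed Sobolev constant~\eqref{DESOCO}, whose existence and asymptotics for a general superposition $\int [u]_{s,p}^p\,d\mu^+(s)$ are far from clear, whereas your bubble argument would require precisely that analysis. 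Conversely, the bubble approach, when it can be carried out, typically yields sharper information near the threshold; but here the abstract algebraic argument is both simpler and broader.
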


Taking~$\mu$ as the Dirac measure at~$1$, Theorem~\ref{mainp1} recovers the
multiplicity result stated in~\cite[Theorem 1.1]{MR3469053}, see
the forthcoming Corollary~\ref{C1}.

Also, taking~$\mu$ as the Dirac measure at some~$s\in(0,1)$, 
Theorem~\ref{mainp1} recovers~\cite[Theorem 1.1]{PSY}
see Corollary~\ref{C2}.

Theorem~\ref{mainp1} is new and very general, in view of the structure of the superposition operator~$A_{\mu,p}$.
Indeed, to the best of our knowledge, several special cases of interest of Theorem~\ref{mainp1} happen to be new.
In particular:
\begin{itemize}
\item Let $s\in [0, 1)$ and set~$\mu:= \delta_1 +\delta_s$,
where~$\delta_1$ and~$\delta_s$ denote the Dirac measures centered at the points~$1$ and~$s$, respectively. With this choice, the operator~$A_{\mu,p}$
boils down to~$-\Delta_p  + (-\Delta)_p^s$. 

We will deal with this case in Corollary~\ref{C3}.
As far as we know, this result is new, even when~$p=2$ (for this case, we refer the interested reader to~\cite{arXiv.2209.07502} for some existence results).\medskip


\item Let~$s\in[0,1)$ and~$\mu:= \delta_1 -\alpha\delta_s$,
where~$\alpha$ is a small positive constant. 
This choice corresponds to the operator~$-\Delta_p-\alpha(-\Delta)^s_p$.
Notice that with this choice the measure~$\mu$ changes sign and the second term in the 
operator has the ``wrong'' sign. 

This case is new and will be treated in Corollary~\ref{C5}.
\medskip

\item Let~$1\ge s_0 > s_1> s_2 >\dots \ge 0$ and
$$\mu:= \sum_{k=0}^{+\infty} c_k \delta_{s_k},\qquad{\mbox{where }}\,
\sum_{k=0}^{+\infty} c_k\in(0,+\infty),$$
with 
\begin{itemize}
\item[(i)] either~$c_0>0$ and~$c_k\ge0$ for all~$k\ge 1$,
\item[(ii)] or
\begin{eqnarray*}& &
c_k>0\ \text{ for all } k\in\{0,\dots, \overline k\} \text{ and } \sum_{k=\overline k +1}^{+\infty} c_k \le \gamma \sum_{k=0}^{\overline k} c_k,\\
&&{\mbox{for some~$\overline k\in\N$ and~$\gamma\ge0$.}}\end{eqnarray*}
\end{itemize}
These two cases of convergent series of Dirac measures are new and will be deal with
in Corollaries~\ref{serie1} and~\ref{serie2}.
\medskip

\item Let~$f$ be a measurable and non identically zero function and consider
a continuous superposition of fractional~$p$-Laplacian of the form
$$ \int_0^1 f(s) (-\Delta)_p^s \, u \, ds.$$
This case is also new in the literature and will be treated in Corollary~\ref{function}.
\end{itemize}

We will discuss in detail all these cases in Section \ref{sec-app}.
\medskip

An interesting feature of the operators considered here is that not only we can deal with nonlinear operators
and infinitely many (possibly uncountably many) fractional operators at the same time, but also
that some of these operators may have the ``wrong sign'' (provided that there is a ``dominant part'' given by the operators with higher fractional order). 

Indeed, we think that the possibility of dealing with complicated operators having some components
with the ``wrong sign'' is particularly intriguing in view of applications in biology and population dynamics.
For example, in light of the L\'evy flight foraging hypothesis, it is often appropriate to model animal dispersal as
a superposition of (possibly fractional) operators of different order, corresponding to different individuals of a given population
adopting different foraging strategies (see e.g.~\cite{MR4249816}). In this sense, the possibility of including also
operators with the ``wrong sign'' provides a simple way to model individuals which, rather than diffusing to search for food,
tend to concentrate together (e.g., for social or mating reason), exhibiting patterns induced by a retrograde fractional
heat equation.

\subsection{An abstract formulation}\label{AB2}

It will now be convenient to cast the framework described in Section~\ref{AB1} into an ``abstract'' setting.
The advantage of this procedure is to develop all the methodology at the level of a suitable functional analysis,
relying on general topological methods, which will then provide the proof of the main result in Theorem~\ref{mainp1}
as a simple byproduct.

To this end, we consider 
\begin{equation}\label{UNICO}
{\mbox{a uniformly convex Banach space~$W$,}}\end{equation} with its norm~$ \norm{\, \cdot\,}$.
The dual space will be denoted by~$W^*$, endowed with the dual norm~$\dnorm{\, \cdot\,}$. The duality pairing will be denoted by~$\dualp{\cdot}{\cdot}$. 

In this setting, one says that~$f \in C(W,W^*)$ is a potential operator if there is a functional~$F \in C^1(W,\R)$, called a potential for~$f$, such that~$F' = f$. 

We consider 
\begin{equation}\label{UNICO2}
{\mbox{two potential operators~$\Ap, \Bp \in C(W,W^*)$}}\end{equation} satisfying the following assumptions:
\begin{enumerate}
\item[$(A_1)$] the operator~$\Ap$ is~$(p - 1)$-homogeneous and odd for some~$p \in (1,+\infty)$, i.e.,
for all~$u \in W$ and~$t \in\R$, we have that
$$\Ap[(tu)] = |t|^{p-2}\, t\, \Ap[u],$$
\item[$(A_2)$] for all~$u, v \in W$, we have that
$$\dualp{\Ap[u]}{v} \le \norm{u}^{p-1} \norm{v}$$
and equality holds if and only if~$\alpha u = \beta v$ for some~$\alpha, \beta \ge 0$, with~$\alpha^2+\beta^2\ne0$,
\item[$(B_1)$] the operator~$\Bp$ is~$(p - 1)$-homogeneous and odd, i.e., for all~$u \in W$ and~$t \in\R$,
$$\Bp[(tu)] = |t|^{p-2}\, t\, \Bp[u],$$ 
\item[$(B_2)$] for all~$u \in W \setminus \set{0}$ we have that
$$\dualp{\Bp[u]}{u} > 0,$$  and for all~$u, v \in W$ we have that
$$\dualp{\Bp[u]}{v} \le \dualp{\Bp[u]}{u}^{(p-1)/p} \dualp{\Bp[v]}{v}^{1/p},$$
\item[$(B_3)$] $\Bp$ is a compact operator.
\end{enumerate}

We will develop the theory in this abstract framework and then we will reduce it to the case of interest in
Section~\ref{AB1} by taking
\begin{equation}\label{2wePAL:a}A_pu:=\int_{[0, 1]} (-\Delta)_p^{s} u \, d\mu^+(s)\qquad{\mbox{ and }}\qquad B_p u:= |u|^{p- 2}\, u .\end{equation}

We define
\begin{equation}\label{SETT1}
I_p(u) := \frac{1}{p} \dualp{\Ap[u]}{u}\qquad{\mbox{and}} \qquad J_p(u) := \frac{1}{p} \dualp{\Bp[u]}{u}.
\end{equation}
The interest of these objects is that \begin{equation}\label{ARETRH} \begin{split}&{\mbox{$I_p$ and~$J_p$
are the potentials of~$\Ap$ and~$\Bp$,}}\\&{\mbox{satisfying~$I_p(0) = 0$ and~$J_p(0) = 0$, respectively,}}\end{split}\end{equation}
see~\cite[Proposition 3.1]{MR4293883}. 

We also observe that, as a consequence of assumption~$(A_2)$, for all~$u \in W$ we have that
$$\dualp{\Ap[u]}{u} = \norm{u}^p$$
and therefore
\begin{equation}\label{SETT2} I_p(u) = \frac{1}{p} \norm{u}^p.\end{equation}
\medskip

The reason for introducing hypotheses~$(A_1)$, $(A_2)$, $(B_1)$, $(B_2)$ and~$(B_3)$ is that, under the above assumptions, it is known that the nonlinear eigenvalue problem
\begin{equation}\label{2.888}
\Ap[u] = \lambda \Bp[u] \quad \text{in } W^*
\end{equation}
possesses an unbounded sequence of eigenvalues~$\lambda_1\le\lambda_2\le\dots$ and that~$\lambda_1>0$, see~\cite{MR1998432}, and also~\cite[Theorem 4.6]{MR2640827} and~\cite[Theorem 1.3]{MR4293883} for full details on this topic.\medskip

Now we consider 
\begin{equation}\label{2.888-0}
{\mbox{a potential operator~$f \in C(W,W^*)$.}}\end{equation} 
We assume that
\begin{equation}\label{2.888-01}
{\mbox{$f$ is odd (i.e. $f(-u)=-f(u)$)}}\end{equation} and 
that the potential~$F$ of~$f$ is normalized such that~$F(0) = 0$.
\medskip

We also consider 
\begin{equation}\label{lppotop}
{\mbox{a potential operator~$L_p\in C(W, W^*)$}}\end{equation}
such that
\begin{enumerate}
\item[$(L_1)$] the operator~$L_p$ is~$(p - 1)$-homogeneous and odd for some~$p \in (1,+\infty)$, i.e.,
for all~$u \in W$ and~$t \in\R$, we have that
$$L_p(tu) = |t|^{p-2}\, t\, L_p(u).$$
\end{enumerate}
Additionally, we define
$$ N_p(u):=\frac1p(L_pu,u).$$
In this setting, $N_p$ is the potential of~$L_p$.

\medskip

Given~$\lambda > 0$, our goal is now to study the equation
\begin{equation} \label{2.10}
\Ap[u] = \lambda \Bp[u] + L_p u + f(u) \quad \text{in } W^*.
\end{equation}

This abstract formulation will then be reduced to the concrete case showcased in Section~\ref{AB1} through the choice
$$L_pu:=\int_{[0, 1]} (-\Delta)_p^{s} u \, d\mu^-(s)$$ and~$f(u):= |u|^{p_{s_\sharp}^* - 2}\, u$ (together with the setting in~\eqref{2wePAL:a}).

To study~\eqref{2.10} in its more general formulation, it is convenient to define, for all~$ u \in W$,
 \begin{equation}\label{EDE}
    E(u) := I_p(u) - N_p(u) - \lambda J_p(u) - F(u)   . \end{equation}
The convenience of this definition is that, in our setting, $E$ will play the role of the variational functional
associated with equation~\eqref{2.10}.\medskip
    
We suppose that the following structural assumptions are satisfied:
\begin{enumerate}
\item[$(\mathcal F_1)$] $F(u) = \o(\norm{u}^p)$ as~$u \to 0$,
\item[$(\mathcal F_2)$] there exist~$\beta \in( 0,+\infty)$ and~$q \in( p,+\infty)$ such that, for all~$u \in W$,
$$F(u) \ge \dfrac{\beta}{q}\, \big(p\, J_p(u)\big)^{q/p},$$
\item[$(\mathcal F_3)$] there exists~$c^* \in( 0,+\infty)$ such that the functional~$E$ satisfies the Palais-Smale
condition~\PS{c} for all~$c \in (0,c^*)$,
\item[$(\mathcal N_1)$] there exists~$\eta\in (0, 1)$ such that
$$0\le N_p(u)\le \eta\, I_p(u).$$
\end{enumerate}\medskip

Our main result in this framework\footnote{As a technical observation, we stress that the eigenvalues that appear in the forthcoming Theorem~\ref{Theorem 2.9} are defined using the cohomological index. It is not clear that this theorem is true for the standard sequence of eigenvalues defined using the genus since the proof of Theorem~\ref{Theorem 2.6} is based on the piercing property of the cohomological index and the genus does not have this property.}
goes as follows:

\begin{theorem} \label{Theorem 2.9}
Suppose that
\begin{equation} \label{2.11}
\lambda_l - \beta^{p/q} \left(\frac{pqc^*}{q - p}\right)^{1 - p/q} < \lambda < \lambda_l = \dots = \lambda_{l+m-1}
\end{equation}
for some~$l, m \ge 1$.

Then, there exists~$\eta_0>0$, depending only on~$N$, $\Omega$, $p$, $\mu$,
$\lambda$ and~$l$, such that if~$\eta\in[0,\eta_0]$, then
equation~\eqref{2.10} has~$m$ distinct pairs of nontrivial solutions~$\pm u^\lambda_1,\dots,\pm u^\lambda_m$.
\end{theorem}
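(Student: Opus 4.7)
\textbf{Proof plan for Theorem~\ref{Theorem 2.9}.}

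The plan is to find $m$ distinct pairs of critical points of the functional $E$ defined in~\eqref{EDE} via an abstract linking argument based on the cohomological index of Fadell--Rabinowitz, in the spirit of the multiplicity results for concave/critical problems. First I would recall that, thanks to $(A_1)$, $(A_2)$, $(B_1)$, $(B_2)$, $(B_3)$, one has a variational characterization
\[
\lambda_l=\inf_{\substack{M\subset\mathcal{M}\\ M\text{ symmetric},\ i(M)\ge l}}\sup_{u\in M} p\,I_p(u),\qquad \mathcal{M}:=\{u\in W:\ pJ_p(u)=1\},
\]
and a dual characterization saying that there exists a symmetric set $B_\ell\subset\mathcal{M}$ with $i(\mathcal{M}\setminus B_\ell)\le l-1$ and $pI_p(u)\ge\lambda_l$ for all $u\in B_\ell$; moreover the span of the first $l+m-1$ eigenfunctions, or more precisely the cone $W_{l+m-1}$ generated by a symmetric set of index $l+m-1$ on which $pI_p\le\lambda_{l+m-1}=\lambda_l$, satisfies $I_p(u)\le\lambda_l J_p(u)$ throughout.

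Next I would set up, for every $k\in\{l,\dots,l+m-1\}$, the min-max level
\[
c_k:=\inf_{A\in\Gamma_k}\,\sup_{u\in A} E(u),
\]
where $\Gamma_k$ is a family of symmetric compact subsets of $W$ of cohomological index at least $k$, built so that each $A\in\Gamma_k$ intersects a small odd image of $B_\ell$ (this is where the piercing property is used: no odd continuous image of a set of index $\ge k\ge l$ can avoid a set of coindex $\le l-1$). The upper bound is obtained by testing with a radial dilation of a set of index $l+m-1$ contained in $W_{l+m-1}$: using $(\mathcal{N}_1)$ to discard $N_p\ge 0$, the homogeneity $I_p(u)\le\lambda_l J_p(u)$ on $W_{l+m-1}$, and the subcritical lower bound $(\mathcal{F}_2)$, one maximizes
\[
(\lambda_l-\lambda)\,J_p(u)-\tfrac{\beta}{q}\bigl(pJ_p(u)\bigr)^{q/p}
\]
explicitly in $J_p(u)$. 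The maximum equals $\frac{q-p}{pq}\,\beta^{-p/(q-p)}(\lambda_l-\lambda)^{q/(q-p)}$, and inequality~\eqref{2.11} is exactly the condition that translates this into $\sup_{A_0}E<c^*$, so each $c_k$ lies below the Palais--Smale threshold $c^*$.

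For the lower bound I would use $B_\ell$ as above together with the homogeneity of $I_p$, $N_p$, $J_p$. On $B_\ell$ one has $I_p(u)\ge\lambda_l/p$, $J_p(u)=1/p$, and by $(\mathcal{N}_1)$ with $\eta<1-\lambda/\lambda_l$ also
\[
I_p(u)-N_p(u)-\lambda J_p(u)\ge \tfrac{(1-\eta)\lambda_l-\lambda}{p}>0.
\]
Combined with $(\mathcal{F}_1)$, which gives $F(tu)=\o(t^p)$ uniformly for $u\in B_\ell$, this yields that on the small ``sphere'' $\{tu:u\in B_\ell\}$ with $t=r_0$ sufficiently small one has $E\ge c_0>0$. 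Choosing $\eta_0:=1-\lambda/\lambda_l-\delta$ for a small fixed $\delta$ controls exactly this step and produces the $\eta_0$ in the statement. Linking by the piercing property then forces $c_k\ge c_0>0$ for every $k$.

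Finally, having $0<c_l\le\dots\le c_{l+m-1}<c^*$, the Palais--Smale hypothesis $(\mathcal{F}_3)$ and a standard deformation argument (the monotonicity of the index under odd continuous maps, together with the $\mathbb{Z}_2$-equivariant deformation lemma) yield that each $c_k$ is a critical value of $E$; and whenever $c_k=c_{k+1}=\dots=c_{k+j}$ coincide, the corresponding critical set has cohomological index at least $j+1$ and therefore contains infinitely many antipodal pairs. In all cases this produces $m$ distinct pairs $\pm u^\lambda_1,\dots,\pm u^\lambda_m$ of nontrivial critical points of $E$, hence $m$ pairs of solutions of~\eqref{2.10}. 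The main difficulty I anticipate is the careful construction of the class $\Gamma_k$ and the verification of the linking-by-piercing property in a form that is compatible simultaneously with the non-homogeneous perturbation $F$ and with the operator $L_p$ having the ``wrong sign'': once $(\mathcal{N}_1)$ is exploited to absorb $N_p$ into $I_p$, the rest of the argument parallels the by-now classical cohomological index scheme.
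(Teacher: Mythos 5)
Your plan is essentially the paper's proof: the paper also works with the Fadell--Rabinowitz cohomological index, picks a compact symmetric set of index $l+m-1$ inside $\Psi^{\lambda_l}$ (from the eigenvalue characterization, here Theorem~\ref{Theorem 2.7}), uses $(\mathcal F_2)$ plus homogeneity to get the upper bound, maximizes the one-variable expression to obtain the same threshold $\left(\frac{1}{p}-\frac{1}{q}\right)\left(\frac{\lambda_l-\lambda}{\beta^{p/q}}\right)^{q/(q-p)}<c^*$, uses $(\mathcal F_1)$ and $(\mathcal N_1)$ with $\eta$ small for the positive lower bound on the small sphere, and then concludes. The one organizational difference is that instead of rebuilding the min-max classes $\Gamma_k$ and the piercing/deformation machinery from scratch, the paper invokes the already-packaged abstract linking theorem from \cite{MR3469053} (stated here as Theorem~\ref{Theorem 2.6}), which reduces everything to verifying the three inequalities in~\eqref{2.7} for the sets $A$, $B$, $X$; your sketch in effect re-proves that abstract theorem, which is more work but the same route.
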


An immediate consequence of this result occurs by taking~$m:=1$ here above. In this way, we obtain:

\begin{corollary}\label{poaskdjvpoelfP}
Suppose that, for some~$l \ge 1$,
\[
\lambda_l - \beta^{p/q} \left(\frac{pqc^*}{q - p}\right)^{1 - p/q} < \lambda < \lambda_l
.\]

Then, there exists~$\eta_0>0$, depending only on~$N$, $\Omega$, $p$, $\mu$,
$\lambda$ and~$l$, such that if~$\eta\in[0,\eta_0]$, then
equation~\eqref{2.10} has a nontrivial solution
(and therefore a pair of nontrivial solutions, one equal to the other up to a minus sign).
\end{corollary}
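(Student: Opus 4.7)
The plan is to obtain this corollary as the simplest specialization of Theorem~\ref{Theorem 2.9}, namely the case $m=1$. With this choice, the chain of equalities $\lambda_l=\dots=\lambda_{l+m-1}$ appearing in~\eqref{2.11} collapses to the trivial identity $\lambda_l=\lambda_l$, so the spectral gap condition becomes
\[
\lambda_l - \beta^{p/q} \left(\frac{pqc^*}{q - p}\right)^{1 - p/q} < \lambda < \lambda_l,
\]
which is precisely the hypothesis of the corollary. Likewise, the constant $\eta_0$ furnished by Theorem~\ref{Theorem 2.9} is stated to depend only on $N$, $\Omega$, $p$, $\mu$, $\lambda$ and $l$, with no dependence on $m$, so the very same $\eta_0$ can be taken in the corollary.

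Invoking Theorem~\ref{Theorem 2.9} with $m=1$ then produces exactly one pair $\pm u^\lambda_1$ of nontrivial solutions to~\eqref{2.10}, yielding in particular the existence of a single nontrivial solution $u^\lambda_1$. The parenthetical assertion that nontrivial solutions automatically come in pairs is a direct consequence of the $\Z_2$-equivariance of the problem: the operators $\Ap$, $\Bp$ and $L_p$ are odd by assumptions $(A_1)$, $(B_1)$ and $(L_1)$ respectively, while $f$ is odd by~\eqref{2.888-01}, so whenever $u$ solves~\eqref{2.10} so does $-u$.

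No substantive obstacle arises in carrying out this step: the structural hypotheses $(\mathcal F_1)$--$(\mathcal F_3)$ and $(\mathcal N_1)$ do not involve $m$ and are inherited unchanged from Theorem~\ref{Theorem 2.9}; the only point worth double-checking is that nothing in the proof of that theorem silently requires $m\ge 2$, which is indeed the case, since the multiplicity argument based on the cohomological index degenerates gracefully to the single-pair statement when $m=1$. Accordingly, the corollary follows immediately, and the proof consists in a one-line deduction from Theorem~\ref{Theorem 2.9}.
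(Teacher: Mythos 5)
Your proof is correct and matches the paper's own treatment: the corollary is obtained verbatim by specializing Theorem~\ref{Theorem 2.9} to the case $m=1$, and your remark that the pair $\pm u$ arises from the oddness of $\Ap$, $\Bp$, $L_p$ and $f$ correctly justifies the parenthetical claim.
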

\medskip

The rest of this paper is organized as follows. In Section~\ref{ANasqdf}
we deal with the abstract formulation and prove Theorem~\ref{Theorem 2.9}.

Then, we focus on the proof of Theorem~\ref{mainp1}. For this, 
in Section~\ref{SOBOLEV} we present some uniform inequalities of Sobolev type, of independent interest,
and in Section~\ref{BaUPS}
we introduce a suitable functional setting to deal with the problem under consideration.

The proof of Theorem~\ref{mainp1} is completed in Section~\ref{BaUPS-2}, by checking that
the hypotheses of the abstract result in Theorem~\ref{Theorem 2.9} are fulfilled. 

In Section~\ref{sec-app} we ``specialize" the superposition operator~$A_{\mu,p}$ and we employ Theorem~\ref{mainp1} to provide new multiplicity results for many cases of interest.

\section{Proof of Theorem~\ref{Theorem 2.9}}\label{ANasqdf}

Given~$r > 0$, we consider the sphere of radius~$r$ in~$W$, namely
$$S_r := \set{u \in W : \norm{u} = r}.$$
We also use the short notation~$S:=S_1$.
In view of~\eqref{SETT1} and~\eqref{SETT2}, we have that
$$ S = \set{u \in W \,:\, I_p(u) = \frac{1}{p}}.$$

For every~$u\in S$, we let
\begin{equation} \label{2.8}
\Psi(u) := \frac{1}{p\, J_p(u)} .
\end{equation}

It is known that critical points of~$\Psi$ in~$S$ are related to solutions of the eigenvalue problem in~\eqref{2.888},
see~\cite[Theorem 4.6]{MR2640827} and~\cite[Theorem 1.3]{MR4293883}, and, in particular, that
\begin{equation} \label{2.9}
\lambda_1 = \inf_{u \in S}\, \Psi(u) > 0.
\end{equation}

To prove Theorem~\ref{Theorem 2.9}, 
we will rely on an abstract critical point theory, detailed in~\cite{MR3469053}, based on the~$\Z_2$-cohomological index introduced by Fadell and Rabinowitz in~\cite{MR0478189} and denoted here by~$i(\cdot)$.



To this end, we say that a subset~$A$ of~$W$ is symmetric
if~$x\in A$ if and only if~$-x\in A$. 

When~$E$ is an even functional satisfying~$(\mathcal F_3)$, the following result holds true:

\begin{theorem}[{\cite[Theorem 2.2]{MR3469053}}] \label{Theorem 2.6}
Let~$A_0$ and~$B_0$ be symmetric subsets of the unit sphere~$S $ such that~$A_0$ is compact and~$B_0$ is closed.

Assume that, for some integers~$l, m \ge 1$, we have that
\[
i(A_0) \ge l + m - 1 \qquad{\mbox{and}}\qquad i(S \setminus B_0) \le l - 1.
\]

Let~$r\in(0,+\infty)$ and~$R\in(r,+\infty)$. We define
\begin{equation}\label{LAUa}\begin{split}
&A := \set{Ru : u \in A_0},\\
&B: = \set{ru : u \in B_0}\\
{\mbox{and }}\quad&X := \set{tu : u \in A,\, 0 \le t \le 1}.\end{split}\end{equation}

We assume that
\begin{equation} \label{2.7}
\sup_{u \in A}\, E(u) \le 0 < \inf_{u \in B}\, E(u) \qquad{\mbox{and}}\qquad \sup_{u \in X}\, E(u) < c^*.
\end{equation}



Then, $E$ has~$m$ distinct pairs of critical points.
\end{theorem}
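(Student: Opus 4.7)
The plan is to prove Theorem~\ref{Theorem 2.6} by a symmetric minimax argument based on the Fadell--Rabinowitz $\mathbb{Z}_2$-cohomological index~$i$. Since $A_p$, $B_p$, $L_p$ and~$f$ are all odd, the functional~$E$ defined in~\eqref{EDE} is even, and hypothesis~$(\mathcal F_3)$ provides the Palais--Smale condition on $(0,c^*)$, which allows the construction of odd pseudo-gradient deformations of sublevel sets of~$E$ in this range. The goal is to produce $m$ minimax critical values $c_l \le c_{l+1} \le \dots \le c_{l+m-1}$ all lying in~$(0,c^*)$; a standard index-subadditivity argument then promotes these to $m$ distinct pairs of nontrivial critical points.

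To set up the minimax, I would consider the class
\[
\Gamma := \bigl\{\gamma \in C(X,W) : \gamma \text{ is odd and } \gamma|_A = \mathrm{id}_A\bigr\}
\]
of admissible odd deformations of~$X$ fixing~$A$, and for each integer $k \ge 1$ define a Benci-type pseudo-index
\[
c_k := \inf\Bigl\{\,\sup_{u \in \gamma(X)} E(u) \,:\, \gamma \in \Gamma,\ i^*(\gamma) \ge k\,\Bigr\},
\]
where $i^*(\gamma)$ is the cohomological index of a symmetric ``trace'' of~$\gamma(X)$ against the obstacle~$B$, for instance $i^*(\gamma) := i(\gamma^{-1}(B))$ after a suitable symmetrization. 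Two index estimates are then the heart of the matter. For the \emph{upper bound}, the identity $\mathrm{id}_X$ lies in~$\Gamma$ and one checks $i^*(\mathrm{id}_X) \ge l+m-1$: since $A = R \cdot A_0$ is oddly homeomorphic to~$A_0$, one has $i(A) = i(A_0) \ge l+m-1$, and since $X$ is a symmetric cone over~$A$ with $B \subset S_r$ and $0 < r < R$, the radial structure transfers this index down to~$X \cap B$; this yields $c_{l+m-1} \le \sup_X E < c^*$. For the \emph{lower bound}, any $\gamma \in \Gamma$ with $i^*(\gamma) \ge l$ satisfies $\gamma(X) \cap B \ne \emptyset$ by the piercing property of~$i$ combined with the hypothesis $i(S \setminus B_0) \le l-1$; hence $\sup_{\gamma(X)} E \ge \inf_B E > 0$, giving $c_l > 0$.

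A standard symmetric deformation lemma applied to the even functional~$E$ on the interval $(0,c^*)$ then shows that each $c_k$ is a critical value of~$E$: were $c_k$ a regular value, an odd deformation would lower $\sup_{\gamma(X)} E$ below~$c_k$ without destroying the pseudo-index condition (as $\Gamma$ is stable under composition with such odd homotopies), contradicting the definition of the infimum. If the values $c_l \le \dots \le c_{l+m-1}$ are all distinct, we read off $m$ distinct pairs of nontrivial critical points at once. If some coincide, say $c_k = \dots = c_{k+j}$ for some $j \ge 1$, a contradiction argument using the deformation lemma and the monotonicity of~$i$ yields $i(K_{c_k}) \ge j+1$ for the critical set at that level, so $K_{c_k}$ contains infinitely many pairs and the total count remains at least~$m$. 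The main obstacle will be the verification of the piercing step $i^*(\mathrm{id}_X) \ge l+m-1$ together with the stability of the pseudo-index under the deformations used: this is precisely the point where the \emph{piercing property} of the cohomological index, rather than merely the subadditivity shared with the Krasnosel'skii genus, is indispensable, consistently with the footnote preceding Theorem~\ref{Theorem 2.9}.
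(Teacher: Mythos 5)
The paper does not actually prove this statement: Theorem~\ref{Theorem 2.6} is imported verbatim from \cite[Theorem~2.2]{MR3469053}, so there is no internal proof to compare against. Your pseudo-index minimax scheme is, in outline, exactly the argument of that reference (and of the framework in \cite{MR2640827}): the cone structure of $X$ over $A$ together with the piercing property yields the index bound $\ge l+m-1$ at the level of the obstacle sphere $S_r$; the hypothesis $i(S\setminus B_0)\le l-1$ (equivalently $i(S_r\setminus B)\le l-1$, by radial scaling) forces every admissible set of pseudo-index $\ge l$ to meet $B$, giving $c_l\ge\inf_B E>0$; and the \PS{c} condition on $(0,c^*)$ allows the odd deformation and multiplicity lemmas to produce $m$ distinct pairs of critical points with values in $(0,c^*)$, hence nontrivial. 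The one imprecision is your definition of the pseudo-index as $i^*(\gamma):=i(\gamma^{-1}(B))$: the standard Benci-type quantity is $i^*(M):=\min_{\gamma}i(\gamma(M)\cap S_r)$ taken over a group of odd homeomorphisms fixing the region where $E\le 0$ or $E\ge c^*$, and it is precisely for the deformed sets $\gamma(X)$ (not just $X$ itself, where radial projection suffices) that the piercing property is indispensable --- a point you correctly flag at the end and which is the content of the paper's footnote about genus versus cohomological index.
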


Given~$a \in \R$, we consider the sublevel and superlevel sets of~$\Psi$, as defined in~\eqref{2.8}, namely we set
$$\Psi^a := \set{u \in S : \Psi(u) \le a}\qquad{\mbox{and}}\qquad\Psi_a = \set{u \in S : \Psi(u) \ge a}.$$
With this notation, we have:

\begin{theorem}[{\cite[Theorem 4.6]{MR2640827}} and {\cite[Theorem 1.3]{MR4293883}}] \label{Theorem 2.7}

Assume that~$l\ge2$ and that~$\lambda_{l-1} < \lambda_l$. 

Then,
\[
i(\Psi^{\lambda_{l-1}}) = i(S \setminus \Psi_{\lambda_l}) = l - 1
\]
and~$\Psi^{\lambda_{l-1}}$ has a compact symmetric subset of index~$l - 1$.
\end{theorem}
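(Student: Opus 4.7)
The plan is to deduce the theorem from the variational characterization of the eigenvalues $\lambda_l$ in terms of the Fadell--Rabinowitz cohomological index (introduced in~\cite{MR0478189}), combined with the monotonicity, piercing, and deformation properties of $i(\cdot)$. In the present setting, the eigenvalues admit the minimax representation
$$\lambda_l \,=\, \inf_{A \in \mathcal{F}_l}\, \sup_{u \in A}\, \Psi(u),$$
where $\mathcal{F}_l$ denotes the family of symmetric compact subsets $A \subset S$ with $i(A) \ge l$, the functional $\Psi$ is as in~\eqref{2.8}, is even, and satisfies the Palais--Smale condition on $S$ --- the latter being a consequence of the compactness assumption~$(B_3)$ on $B_p$ together with~$(A_2)$.

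The first step is to produce the compact symmetric subset of $\Psi^{\lambda_{l-1}}$ of index $l-1$ claimed in the last line of the theorem; this will simultaneously deliver $i(\Psi^{\lambda_{l-1}}) \ge l-1$. For every $\varepsilon>0$, the minimax formula yields $A_\varepsilon \in \mathcal{F}_{l-1}$ with $\sup_{A_\varepsilon}\Psi < \lambda_{l-1}+\varepsilon$. A standard deformation argument, via the (projection onto $S$ of the) negative gradient flow of $\Psi$ together with the Palais--Smale condition, then allows one to push $A_\varepsilon$ through an odd continuous homotopy (hence index-preserving) into a symmetric compact subset of $\Psi^{\lambda_{l-1}}$ of index at least $l-1$; here the strict gap $\lambda_{l-1}<\lambda_l$ is crucial to rule out critical values of $\Psi$ accumulating at $\lambda_{l-1}$ from above.

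For the upper bound $i(\Psi^{\lambda_{l-1}}) \le l - 1$, I would argue by contradiction: if $i(\Psi^{\lambda_{l-1}}) \ge l$, the piercing/continuity property of the cohomological index for closed symmetric sets produces a symmetric compact subset $K \subset \Psi^{\lambda_{l-1}}$ with $i(K) \ge l$, so that $K \in \mathcal{F}_l$. This forces
$$\lambda_l \,\le\, \sup_{u \in K}\Psi(u) \,\le\, \lambda_{l-1},$$
contradicting the assumption $\lambda_{l-1} < \lambda_l$.

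Finally, the identity $i(S \setminus \Psi_{\lambda_l}) = l - 1$ follows by the analogous dichotomy. The inclusion $\Psi^{\lambda_{l-1}} \subset S \setminus \Psi_{\lambda_l}$ (valid precisely because $\lambda_{l-1} < \lambda_l$), combined with the monotonicity of $i$, gives $i(S\setminus\Psi_{\lambda_l}) \ge l-1$ at once from the previous step. For the reverse inequality, assume toward a contradiction that $i(S \setminus \Psi_{\lambda_l}) \ge l$; the piercing property applied to this open symmetric set then yields a symmetric compact $K$ of index $\ge l$ with $\Psi < \lambda_l$ pointwise on $K$, and by continuity of $\Psi$ and compactness of $K$ one obtains $\sup_K\Psi < \lambda_l$, again contradicting the infimum definition of $\lambda_l$. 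The main technical obstacle throughout is the passage from level sets to compact subsets of the same cohomological index --- that is, the piercing/deformation mechanism on the infinite-dimensional sphere $S$ --- whose validity ultimately rests on the Palais--Smale condition for $\Psi$ that is guaranteed by hypothesis~$(B_3)$.
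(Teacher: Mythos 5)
First, a structural remark: the paper does not prove this statement at all --- it is quoted from \cite{MR2640827}*{Theorem 4.6} and \cite{MR4293883}*{Theorem 1.3} --- so your proposal can only be measured against the proofs in those references. Your skeleton (the minimax characterization $\lambda_l=\inf\{\sup_A\Psi : i(A)\ge l\}$, the upper bound $i(\Psi^{\lambda_{l-1}})\le l-1$ obtained by testing the definition of $\lambda_l$, and the inclusion $\Psi^{\lambda_{l-1}}\subset S\setminus\Psi_{\lambda_l}$ to transfer the lower bound) is the standard one and matches the references in outline. The two steps that carry all the difficulty, however, are not closed by the tools you invoke.

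The first gap is in the deformation step. You claim that the strict gap $\lambda_{l-1}<\lambda_l$ ``rules out critical values of $\Psi$ accumulating at $\lambda_{l-1}$ from above'', and you need this to flow the nearly optimal sets $A_\varepsilon$ down into $\Psi^{\lambda_{l-1}}$. This is unjustified: $\lambda_{l-1}$ and $\lambda_l$ are consecutive \emph{variational} (minimax) eigenvalues, and it is not known --- already for the $p$-Laplacian this is a well-known open problem --- that these exhaust the critical values of $\Psi$. The interval $(\lambda_{l-1},\lambda_l)$ may therefore contain critical values, possibly accumulating at $\lambda_{l-1}$, and the negative gradient flow of $A_\varepsilon$ can then get stuck strictly above the level $\lambda_{l-1}$. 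The second gap is the ``piercing'' extraction: you invoke the piercing property twice to extract, from the non-compact sets $\Psi^{\lambda_{l-1}}$ and $S\setminus\Psi_{\lambda_l}$, compact symmetric subsets of the same index. The piercing property concerns an odd homotopy $\varphi(A\times[0,1])$ crossing the interface $A_0\cap A_1$ of two closed sets; it does not assert that the index of an open or closed non-compact set is approximated by indices of its compact symmetric subsets, and no standard axiom of the Fadell--Rabinowitz index provides this for the sets at hand. In fact, the existence of a compact symmetric subset of $\Psi^{\lambda_{l-1}}$ of index $l-1$ is precisely the delicate extra assertion of the theorem; in the cited references it rests on the specific structure of $\Psi$ (in particular the weak continuity of $J_p$ coming from the compactness hypothesis $(B_3)$), not on generic deformation or index axioms. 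As written, both hard inequalities, $i(\Psi^{\lambda_{l-1}})\ge l-1$ and $i(S\setminus\Psi_{\lambda_l})\le l-1$, remain open in your argument.
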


With these preliminary results, we can now complete the proof of Theorem~\ref{Theorem 2.9}, by arguing as follows:

\begin{proof}[Proof of Theorem~\ref{Theorem 2.9}]
Regarding condition~\eqref{2.11},
we can take~$l$ as small as possible and~$m$ as large as possible  satisfying~$\lambda_l = \dots = \lambda_{l+m-1}$.

Therefore, we may assume that 
\begin{equation}\label{REDU1}
\lambda_{l+m-1} < \lambda_{l+m}\end{equation}
and (unless~$l=1$) that
\begin{equation}\label{REDU2}
\lambda_{l-1} < \lambda_l.
\end{equation} 

In view of~\eqref{REDU1}, we can use
Theorem~\ref{Theorem 2.7} (with~$l$ replaced here by~$l+m$, which is greater than or equal to~$2$) and deduce that~$\Psi^{\lambda_{l+m-1}}$ possesses a compact symmetric subset~$C$ of index~$l + m - 1$. 

We now take~$A_0 := C$ and~$B_0 := \Psi_{\lambda_l}$ and we observe that, by construction,
$$ i(A_0) = l + m - 1.$$

We claim that
\begin{equation}\label{INPRO}
i(S \setminus B_0) = l - 1.
\end{equation}
To prove this, we distinguish two cases, either~$l=1$ or~$l\ge2$.

If~$l = 1$, then we can use~\eqref{2.9} and deduce that~$B_0 = S$, which gives that~$i(S \setminus B_0) = 0$, thus establishing~\eqref{INPRO} in this case.

If instead~$l \ge 2$, we can use~\eqref{REDU2}, which in turn allows us to Theorem~\ref{Theorem 2.7} and infer that~$
i(S \setminus \Psi_{\lambda_l}) = l - 1$, which proves~\eqref{INPRO} in this case as well.

Now we recall~\eqref{2.8} and we observe that,
for all~$u \in S$ and~$t \ge 0$,
\begin{equation} \label{2.12}
E(tu) = t^p\, \big(I_p(u) -N_p(u) - \lambda J_p(u)\big) - F(tu) = \frac{t^p}{p} \left(1 -pN_p(u) - \frac{\lambda}{\Psi(u)}\right) - F(tu).
\end{equation}

We also pick~$r\in(0,+\infty)$ and~$R\in(r,+\infty)$ and we define~$A$, $B$, and~$X$ 
as in~\eqref{LAUa}. Our goal is to find~$r$ and~$R$ such that condition~\eqref{2.7} is satisfied. To this end,
we recall~\eqref{2.8} and~$(\mathcal F_2)$, to see that, for all~$u \in A_0 \subset \Psi^{\lambda_{l+m-1}} = \Psi^{\lambda_l}$, 
\[
F(Ru) \ge \dfrac{\beta R^q}{q}\, \big(p\, J_p(u)\big)^{q/p} = \dfrac{\beta R^q}{q\, \Psi^{q/p}(u)} \ge \dfrac{\beta R^q}{q\, \lambda_l^{q/p}}.
\]
This, together with~\eqref{2.12}, gives that
\begin{equation} \label{2.13}
E(Ru) \le \frac{R^p}{p} \left(1 - \frac{\lambda}{\lambda_l}\right) - \dfrac{\beta R^q}{q\, \lambda_l^{q/p}}.
\end{equation}

Regarding~$(\mathcal F_2)$,
we stress that~$\beta > 0$ and~$q > p$. As a result, the first inequality in~\eqref{2.7} holds true 
as a consequence of~\eqref{2.13} as long as~$R$ is sufficiently large.

Now, to complete the check of the validity of~\eqref{2.7},
we recall that~$F(ru) = \o(r^p)$, thanks to~$(\mathcal F_1)$.

Hence, we exploit~\eqref{2.12},  $(\mathcal F_1)$
and~$(\mathcal N_1)$ to see that, for all~$u \in B_0 = \Psi_{\lambda_l}$, 
\[
E(ru) \ge \frac{r^p}{p} \left(1-\eta - \frac{\lambda}{\lambda_l} + \o(1)\right) \text{ as } r \to 0
.\]
Since~$\lambda < \lambda_l$, it follows from this that the second inequality in~\eqref{2.7} also holds if~$r$ and~$\eta$ are sufficiently small. 

Furthermore, we employ~\eqref{2.13} to see that,
for all~$u \in A$ and~$0 \le t \le 1$,
\begin{equation} \label{2.14}
E(tu) \le \frac{t^p R^p}{p} \left(1 - \frac{\lambda}{\lambda_l}\right) - \dfrac{\beta t^q R^q}{q\, \lambda_l^{q/p}} = \frac{s^p}{p}\, (\lambda_l - \lambda) - \dfrac{\beta s^q}{q},
\end{equation}
where we set~$s = tR/\lambda_l^{1/p}$.

Looking at the maximum attained by the last expression in~\eqref{2.14} over all~$s \ge 0$ and using~\eqref{2.11}, we obtain that
\[
\sup_{u \in X}\, E(u) \le \left(\frac{1}{p} - \frac{1}{q}\right)\!\left(\frac{\lambda_l - \lambda}{\beta^{p/q}}\right)^{q/(q - p)} < c^*.
\]
This gives that condition~\eqref{2.7} is fulfilled in this case.
Hence, we can use
Theorem~\ref{Theorem 2.6} and thus obtain~$m$ distinct pairs of nontrivial critical points of~$E$, as desired.
\end{proof}

\section{Uniform Sobolev embeddings}\label{SOBOLEV}

Here we discuss some uniform embedding of Sobolev type,
whose interest may possibly go even beyond the specific goals of this paper
(and, in relation to this, it is a pleasure to thank
Oscar Dom\'{\i}nguez and Petru Mironescu for sharing information about the state of the art on the fractional Sobolev embeddings).

\begin{proposition}\label{PRBMI}
Let~$\Omega$ be a bounded, open subset of~$\R^N$ and~$p\in [1,N)$.

Then, there exists~$C_0=C_0(N,\Omega,p)>0$ such that, for every~$s\in[0,1]$ and every measurable function~$u:\R^N\to\R$ with~$u=0$ a.e. in~$\R^N\setminus\Omega$, one has that
$$ \|u\|_{L^p(\R^N)}\le C_0\,[u]_{s,p}.$$
\end{proposition}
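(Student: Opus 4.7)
The plan is to treat the endpoints $s = 0$ and $s = 1$ separately and then interpolate uniformly across the interior $s \in (0, 1)$. At $s = 0$ the definition gives $[u]_{0, p} = \|u\|_{L^p(\R^N)}$, so the inequality holds with $C_0 = 1$. At $s = 1$ one has $[u]_{1, p} = \|\nabla u\|_{L^p(\R^N)}$, and since $u$ vanishes outside the bounded set $\Omega$, the classical Poincar\'e inequality yields the bound with $C_0$ equal to the Poincar\'e constant of $(\Omega, p)$. The task is therefore to produce a single constant valid across $(0, 1)$ that matches these endpoint values.

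For $s \in (0, 1)$ I would use the standard exterior-integration argument. Setting $D := \mathrm{diam}(\Omega)$, for any $x \in \Omega$ one has $\Omega \subset B_D(x)$, so $u(y) = 0$ whenever $|y - x| > D$. Thus
\[
\iint_{\R^{2N}} \frac{|u(x) - u(y)|^p}{|x - y|^{N + sp}}\, dx\, dy \,\ge\, \int_\Omega |u(x)|^p\, dx \int_{|y - x| > D} \frac{dy}{|x - y|^{N + sp}} \,=\, \frac{|\mathbb{S}^{N-1}|}{sp\, D^{sp}}\, \|u\|_{L^p}^p,
\]
and multiplying by $c_{N, s, p}$ gives $[u]_{s, p}^p \ge \bigl(|\mathbb{S}^{N-1}|\, c_{N, s, p}/(sp\, D^{sp})\bigr)\, \|u\|_{L^p}^p$. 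Using the explicit formula for $c_{N, s, p}$ recalled in the footnote, the ratio $c_{N, s, p}/s$ is continuous on $[0, 1)$ and has a strictly positive limit as $s \to 0^+$, while $D^{sp}$ is bounded on $[0, 1]$ by $\max(1, D^p)$. This yields a uniform constant on every interval $[0, 1 - \bar\delta]$.

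The main obstacle is that $c_{N, s, p}/s$ vanishes like $(1 - s)$ as $s \to 1^-$, since $\Gamma(1 - s)$ has a simple pole at $s = 1$, so the exterior estimate degenerates near $s = 1$. To close the gap I would argue by contradiction: if no uniform $C_0$ existed, one could extract $s_n \to 1$ and $u_n$ supported in $\Omega$ with $\|u_n\|_{L^p} = 1$ and $[u_n]_{s_n, p} \to 0$. The normalization of $c_{N, s, p}$ is chosen precisely so that $[v]_{s, p}$ is continuous in $s \in [0, 1]$ for nice $v$ (the Bourgain--Brezis--Mironescu / Maz'ya--Shaposhnikova scaling), and a compactness argument combined with the classical $s = 1$ Poincar\'e inequality should yield a contradiction. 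Taking $C_0$ to be the maximum of the constants from the three regimes completes the proof; the compactness/continuity argument near $s = 1$ is where I expect the technical difficulty to lie.
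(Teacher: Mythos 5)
Your overall architecture agrees with the paper's (which is a pure contradiction argument: extract $s_k$, $u_k$ with $\|u_k\|_{L^p}=1$ and $[u_k]_{s_k,p}^p\le 1/k$, use the exterior-integration bound to force $s_k\to 1$, and then derive a contradiction from the $s\to 1$ limit). Your treatment of $s=0$, $s=1$, and the exterior-integration estimate for $s$ bounded away from $1$ is correct and matches the paper's first half.

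The genuine gap is exactly the step you flag as ``where I expect the technical difficulty to lie'': the contradiction near $s=1$. A naive compactness argument does not go through. The difficulty is that, with the BBM-type normalization, $c_{N,s,p}\sim(1-s)$ as $s\nearrow 1$, so smallness of the \emph{normalized} seminorm $[u_n]_{s_n,p}$ with $s_n\to 1$ gives \emph{no} control on the unnormalized Gagliardo quantity $\iint|u_n(x)-u_n(y)|^p|x-y|^{-N-s_np}\,dx\,dy$, and hence no bound on any fixed-order $W^{s_0,p}$-seminorm. Without such a bound you cannot invoke a compact embedding, and the ``continuity in $s$'' heuristic is only valid for a fixed function $v$, not uniformly along a sequence $u_n$. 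What the paper actually uses here is a quantitative inequality of Bourgain--Brezis--Mironescu type (Theorem~1 of~\cite{MR1945278}), applied to $u_k$ on a cube $Q\supset\Omega$: it bounds $\displaystyle\frac{(N-s_kp)^{p-1}}{1-s_k}\,\|u_k-c_k\|^p_{L^{Np/(N-s_kp)}(Q)}$ by the raw Gagliardo seminorm, where $c_k$ is the average of $u_k$ over $Q$. The crucial extra prefactor $(1-s_k)^{-1}$ exactly cancels the vanishing normalizing constant, giving $\|u_k-c_k\|_{L^p(Q)}\to 0$. One must then still show that the limiting constant $c=\lim c_k$ vanishes (because $u_k$ might be drifting to a nonzero constant on $\Omega$); the paper does this via a characterization of homogeneous fractional Sobolev spaces from~\cite{MR4225499}, though one could alternatively exploit that $u_k\equiv 0$ on $Q\setminus\Omega$. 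In short, the crux of the proposition is the uniform BBM-type estimate, not ``compactness plus Poincar\'e,'' and that piece is missing from your proposal.
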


\begin{proof} Suppose not. Then, there exist sequences~$s_k\in[0,1]$ and~$u_k:\R^N\to\R$ with~$u_k=0$ a.e. in~$\R^N\setminus\Omega$ and such that
\begin{equation}\label{NAR} \|u_k\|_{L^p(\R^N)}^p\ge k\,[u_k]_{s_k,p}^p.\end{equation}
Since~$[u_k]_{0,p}=\|u_k\|_{L^p(\R^N)}$, we have that~$s_k\ne0$.
Also, from the classical Sobolev-Poincar\'e Inequality, we have that~$s_k\ne1$.

Up to replacing~$u_k$ by~$u_k/\|u_k\|_{L^p(\R^N)}$, we can assume that
\begin{equation}\label{Normaliz}
\|u_k\|_{L^p(\R^N)}=1.
\end{equation}
We also take a cube~$Q\subset\R^N$ sufficiently large such that~$\Omega\subset Q$. In this way, if
$$ c_k:=\frac1{|Q|}\int_Q u_k(x)\,dx,$$
we deduce from the H\"older Inequality and~\eqref{Normaliz} that
\begin{equation*}
|c_k|\le\frac1{|Q|}\int_Q |u_k(x)|\,dx\le\frac{\| u_k\|_{L^p(Q)}}{|Q|^{1/p}}\le\frac{1}{|Q|^{1/p}}
\end{equation*}
and therefore, up to a subsequence, $c_k$ converges to some~$c\in\R$ as~$k\to+\infty$.
Up to replacing~$u_k$ with~$-u_k$, we may also suppose that~$c\ge0$.

Additionally, we have that
\begin{equation}\label{AJSMr} [u_k]_{s_k,p}^p\ge c_0\,s_k(1-s_k)\iint_{\R^{2N}}\frac{|u_k(x)-u_k(y)|^p}{|x-y|^{N+s_kp}}\,dx\,dy\end{equation}
for some~$c_0=c_0(N,p)>0$.

Accordingly, if~$R>0$ is large enough such that~$Q\subset B_R$, using that~$u=0$ a.e. outside~$B_R$, recalling~\eqref{Normaliz} we find that, if~$s_k\in(0,1)$,
\begin{equation}\label{NARR2}\begin{split}& [u_k]_{s_k,p}^p\ge c_0\,s_k(1-s_k)\iint_{\Omega\times(\R^N\setminus B_R)}\frac{|u_k(x)|^p}{|x-y|^{N+s_kp}}\,dx\,dy\\&\qquad\qquad\qquad\ge
c_0\,s_k(1-s_k)\,\|u_k\|_{L^p(\Omega)}^p
\int_{\R^N\setminus B_{2R}}\frac{dz}{|z|^{N+s_kp}}=c_1\,(1-s_k),\end{split}\end{equation}
with~$c_1=c_1(N,\Omega,p)>0$.

By~\eqref{NAR}, \eqref{Normaliz} and~\eqref{NARR2}, it follows that
\begin{eqnarray*}
\frac1k\ge [u_k]_{s_k,p}^p\ge c_1\,(1-s_k),
\end{eqnarray*}
therefore~$s_k\to1$ as~$k\to+\infty$.

Now we recall\footnote{We think that there is a very small typo in~\cite[Theorem~1]{MR1945278}.
Namely, the condition~$sp<1$ there should read~$sp$ less than the dimension, thus allowing the use of this result in our context.
See also~\cite[formula~(1)]{MR1940355}.} Theorem~1 in~\cite{MR1945278}, according to which
$$ \iint_{Q\times Q}\frac{|u_k(x)-u_k(y)|^p}{|x-y|^{N+s_kp}}\,dx\,dy\ge c_2
\frac{(N-s_k p)^{p-1}}{1-s_k}\| u_k-c_k\|^p_{L^{Np/(N-s_kp)}(Q)},$$
for some~$c_2=c_2(N,\Omega)>0$.

{F}rom this, \eqref{NAR}, \eqref{Normaliz} and~\eqref{AJSMr}, we arrive at
\begin{eqnarray*}&&
\frac1k\ge [u_k]_{s_k,p}^p\ge
c_0\,s_k(1-s_k)\iint_{\R^{2N}}\frac{|u_k(x)-u_k(y)|^p}{|x-y|^{N+s_kp}}\,dx\,dy\\&&\qquad\quad\ge
c_0\,c_2\,(N-s_k p)^{p-1}\,s_k\| u_k-c_k\|^p_{L^{Np/(N-s_kp)}(Q)}.
\end{eqnarray*}
Hence, using the H\"older Inequality with exponents~$\frac{N}{N-s_kp}$ and~$\frac{N}{s_kp}$,
\begin{eqnarray*}&&
\frac1k\ge \frac{c_0\,c_2\,(N-s_k p)^{p-1}\,s_k}{|Q|^{s_kp/N}}\,\| u_k-c_k\|^p_{L^{p}(Q)}.
\end{eqnarray*}

As a consequence, taking the limit as~$k\to+\infty$, we obtain that
$$ (N-p)^{p-1}\,\lim_{k\to+\infty}\| u_k-c_k\|^p_{L^{p}(Q)}=0,$$
giving that~$u_k-c_k\to0$ in~$L^p(\Omega)$.

Thus, for all~$\rho>0$ and~$x_0\in\R^N$, we define
$$ m_{k,x_0,\rho}:=\frac{1}{|B_\rho|}\int_{B_\rho(x_0)} u_k(x)\,dx=\frac{1}{|B_\rho|}\int_{B_\rho(x_0)\cap \Omega} u_k(x)\,dx$$
and we have that
\begin{eqnarray*}&&
\lim_{k\to+\infty} \left|m_{k,x_0,\rho}-\frac{c\,|B_\rho(x_0)\cap\Omega|}{|B_\rho|}\right|=
\frac{1}{|B_\rho|}\lim_{k\to+\infty} \left|\,\int_{B_\rho(x_0)\cap\Omega}\big( u_k(x)-c\big)\,dx\right|\\&&\qquad\le
\frac{1}{|B_\rho|}\lim_{k\to+\infty} \left[\,\int_{B_\rho(x_0)\cap\Omega}\big| u_k(x)-c_k\big|\,dx+
\int_{B_\rho(x_0)\cap\Omega}\big| c_k-c\big|\,dx
\right]=0.
\end{eqnarray*}

Consequently,
\begin{eqnarray*}&&
\lim_{k\to+\infty} \|u_k-m_{k,x_0,\rho}\|_{L^p(B_\rho(x_0))}^p\\&&\qquad=
\lim_{k\to+\infty}
\left(\,\int_{B_\rho(x_0)\cap\Omega} \big|u_k(x)-m_{k,x_0,\rho}\big|^p\,dx+
\int_{B_\rho(x_0)\setminus\Omega} |m_{k,x_0,\rho}|^p\,dx\right)\\&&\qquad=
\int_{B_\rho(x_0)\cap\Omega} \left|c-\frac{c\,|B_\rho(x_0)\cap\Omega|}{|B_\rho|}\right|^p\,dx+
\int_{B_\rho(x_0)\setminus\Omega}
\frac{c^p\,|B_\rho(x_0)\cap\Omega|^p}{|B_\rho|^p}\,dx\\&&\qquad=c^p\,
\left(\frac{|B_\rho(x_0)\setminus\Omega|^p\, |B_\rho(x_0)\cap\Omega|}{|B_\rho|^p}+
\frac{|B_\rho(x_0)\cap\Omega|^p\, |B_\rho(x_0)\setminus\Omega|}{|B_\rho|^p}\right).
\end{eqnarray*}

Furthermore, by~\cite[Theorem~2.4]{MR4225499},
$$ \sup_{{x_0\in\R^N}\atop{\rho>0}}\rho^{-s_k}\|u_k-m_{k,x_0,\rho}\|_{L^p(B_\rho(x_0))}
\le \widehat{C}\,[u_k]_{s_k,p},$$
for some~$\widehat{C}=\widehat{C}(N,p)>0$.

{F}rom these observations, we obtain that, for every~$\rho>0$ and~$x_0\in\R^N$,
\begin{eqnarray*}&&
\frac{c}\rho\,
\left(\frac{|B_\rho(x_0)\setminus\Omega|^p\, |B_\rho(x_0)\cap\Omega|}{|B_\rho|^p}+
\frac{|B_\rho(x_0)\cap\Omega|^p\, |B_\rho(x_0)\setminus\Omega|}{|B_\rho|^p}\right)^{1/p}\\&&\qquad=
\lim_{k\to+\infty}\frac1\rho\,\|u_k-m_{k,x_0,\rho}\|_{L^p(B_\rho(x_0))}=
\lim_{k\to+\infty}
\rho^{-s_k}\|u_k-m_{k,x_0,\rho}\|_{L^p(B_\rho(x_0))}\\&&\qquad
\le \widehat{C}\,\lim_{k\to+\infty}[u_k]_{s_k,p}\le \widehat{C}\,\lim_{k\to+\infty}\frac1{k^{1/p}}=0.
\end{eqnarray*}
This gives that~$c=0$, and accordingly that~$u_k\to0$ in~$L^p(\Omega)$, but this is in contradiction with~\eqref{Normaliz}.
\end{proof}

\begin{theorem}\label{SOBOLEVEMBEDDING}
Let~$\Omega$ be a bounded, open subset of~$\R^N$ and~$p\in (1,N)$.

Then, there exists~$C=C(N,\Omega,p)>0$ such that, for every~$s$, $S\in[0,1]$ with~$s\le S$ and every measurable function~$u:\R^N\to\R$ with~$u=0$ a.e. in~$\R^N\setminus\Omega$, one has that
$$ [u]_{s,p}\le C\,[u]_{S,p}.$$
\end{theorem}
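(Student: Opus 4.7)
The plan is to proceed by contradiction, along the lines of the proof of Proposition~\ref{PRBMI}. Suppose no such constant $C$ exists; then there are sequences $s_k \le S_k$ in $[0,1]$ and $u_k$ vanishing a.e.\ outside $\Omega$ satisfying $[u_k]_{s_k,p}^p > k\,[u_k]_{S_k,p}^p$. Rescale so that $[u_k]_{s_k,p} = 1$, forcing $[u_k]_{S_k,p} \to 0$. Proposition~\ref{PRBMI} immediately delivers $\|u_k\|_{L^p(\R^N)} \le C_0\,[u_k]_{S_k,p} \to 0$. Passing to a subsequence, $s_k \to s_*$ and $S_k \to S_*$ with $s_* \le S_*$ in $[0,1]$.

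Next I decompose $[u_k]_{s_k,p}^p$ by splitting the Gagliardo integral at a threshold $r > 2\operatorname{diam}(\Omega)$. On $\{|x-y| \ge r\}$ the support condition forces one of $u_k(x), u_k(y)$ to vanish, so after radial integration the piece is bounded by $C\,(c_{N,s_k,p}/s_k)\,\|u_k\|_{L^p}^p$; the prefactor is uniformly bounded on $[0,1]$ by the explicit form of $c_{N,s,p}$, so this term vanishes. On $\{|x-y| \le 1\}$ the pointwise monotonicity $|x-y|^{-(N+s_k p)} \le |x-y|^{-(N+S_k p)}$ yields a bound by $(c_{N,s_k,p}/c_{N,S_k,p})\,[u_k]_{S_k,p}^p$; the intermediate annulus $1 < |x-y| < r$ again contributes only a further $\|u_k\|_{L^p}^p$-type term. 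If $S_* \in [0,1)$ the ratio $c_{N,s_k,p}/c_{N,S_k,p}$ stays bounded (for $S_* \in (0,1)$ because $c_{N,S_k,p}$ is bounded away from $0$, and for $S_* = 0$ via the asymptotic $c_{N,s,p} \asymp s$ near $s = 0$ combined with $s_k \le S_k$), hence $[u_k]_{s_k,p} \to 0$, contradicting the normalisation.

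The delicate case is $S_* = 1$, where $c_{N,S_k,p} \to 0$ and the ratio above can blow up. Here I would invoke the uniform translation estimate $[u]_{s,p} \le \widetilde{C}(N,\Omega,p)\,\|\nabla u\|_{L^p(\R^N)}$ valid for every $s \in [0,1]$ (obtained from $\int_{\R^N} |u(x+h)-u(x)|^p\,dx \le |h|^p \|\nabla u\|_{L^p}^p$ on the near region, the support argument on the far region, and the uniform boundedness of both $c_{N,s,p}/s$ and $c_{N,s,p}/(1-s)$), coupled with the Bourgain--Brezis--Mironescu lower bound $[u]_{S,p}^p \ge c(N,p)\,\|\nabla u\|_{L^p}^p$ for $S$ close to $1$, which is precisely calibrated by the choice of $c_{N,s,p}$. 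Together they force $\|\nabla u_k\|_{L^p} \to 0$, and hence $[u_k]_{s_k,p} \to 0$, a contradiction. This case is the main obstacle: the collapse of $c_{N,S_k,p}$ breaks the direct comparison of Gagliardo integrals, so transferring the smallness from $[u_k]_{S_k,p}$ to $[u_k]_{s_k,p}$ requires pairing the uniform translation inequality with the BBM-type uniform lower bound.
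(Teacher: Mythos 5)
Your contradiction/compactness strategy (modeled on Proposition~\ref{PRBMI}), the normalisation $[u_k]_{s_k,p}=1$, the deduction $\|u_k\|_{L^p}\to 0$, and the three-zone decomposition of the Gagliardo integral are all sound, and they do handle the case $S_*\in[0,1)$: the far and middle annuli contribute only multiples of $\|u_k\|_{L^p}^p$ with uniformly bounded prefactors, and the near term is controlled by the ratio $c_{N,s_k,p}/c_{N,S_k,p}$, which stays bounded whenever $S_*<1$ (including $S_*=0$, via $s_k\le S_k$ and $c_{N,s,p}\asymp s$ near $0$). However, the case $S_*=1$ with $S_k<1$ — which is exactly the regime where the ratio $c_{N,s_k,p}/c_{N,S_k,p}\asymp(1-s_k)/(1-S_k)$ blows up and where the theorem's content actually lives — rests on a claim that is not a theorem.

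The ``BBM-type uniform lower bound'' you invoke, namely
$[u]_{S,p}^p\ge c(N,p)\,\|\nabla u\|_{L^p}^p$ for all $S$ close to $1$,
is false. For a fixed $S<1$ and a smooth compactly supported $u$ oscillating at frequency $k$, one has (after the normalisation by $c_{N,S,p}$) $[u]_{S,p}^p\asymp k^{Sp}\|u\|_{L^p}^p$ while $\|\nabla u\|_{L^p}^p\asymp k^{p}\|u\|_{L^p}^p$, so the ratio $[u]_{S,p}^p/\|\nabla u\|_{L^p}^p\asymp k^{-(1-S)p}\to 0$ as $k\to\infty$. The genuine Bourgain--Brezis--Mironescu statements are either pointwise-in-$u$ limits ($[u]_{s,p}^p\to\|\nabla u\|_{L^p}^p$ as $s\nearrow 1$ for each fixed $u\in W^{1,p}$) or lower bounds in terms of critical Lebesgue norms of $u-\fint u$ (the form actually used in Proposition~\ref{PRBMI}); neither gives a uniform-in-$u$ lower bound against $\|\nabla u\|_{L^p}^p$. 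A further difficulty is that your $u_k$ is only normalised in $W^{s_k,p}$ with $s_k$ possibly bounded away from $1$, so $\|\nabla u_k\|_{L^p}$ may simply be $+\infty$, making the detour through the gradient norm vacuous. This is precisely the point at which the paper's proof stops being elementary: after a direct argument for $S=1$ based on the mean value theorem and Sobolev--Poincar\'e, the case $S\in(0,1)$ (uniformly, including $S$ near $1$) is handled by invoking the Dom\'inguez--Milman extrapolation theorem~\cite[Theorem~3.9]{MR4525724}, a genuinely nontrivial comparison of fractional Sobolev seminorms across orders. Your approach avoids a case split between $S=1$ and $S<1$ at the cost of hiding the real obstruction in a claimed lemma that does not hold; to close the gap you would need to replace it by the Dom\'inguez--Milman estimate (or an equivalent uniform interpolation bound), which is essentially what the paper does.
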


\begin{proof} Let us first suppose that~$S=1$. In this case, we can also assume that~$\|\nabla u\|_{L^p(\R^N)}<+\infty$, otherwise the desired result is obvious.

Also, if~$S=1$ and~$s=1$, the desired result is obvious, and
if~$S=1$ and~$s=0$, then the desired result follows from Proposition~\ref{PRBMI}.

If~$S=1$ and~$s\in(0,1)$, then we argue as follows. For all~$x$, $y\in\R^N$ with~$|x-y|<1$ we have that
\begin{eqnarray*}
|u(x)-u(y)|\le |x-y|\int_0^1 |\nabla u(tx+(1-t)y)|\,dt
\end{eqnarray*}
and thus, using the substitutions~$z:=x-y$ and~$w:=tx+(1-t)y$, and noticing that~$dz\,dw=dx\,dy$,
\begin{eqnarray*}
&&\iint_{\R^{2N}\cap\{|x-y|<1\}}\frac{|u(x)-u(y)|^p}{|x-y|^{N+sp}}\,dx\,dy
\le\int_{\R^{2N}\cap\{|z|<1\}} |z|^{p(1-s)-N}\left(\int_0^1 |\nabla u(w)|\,dt\right)^p\,dz\,dw
\\&&\qquad\qquad\le\frac{C_0}{1-s}\|\nabla u\|_{L^p(\R^N)}^p,
\end{eqnarray*}
for some~$C_0=C_0(N,p)>0$.

Consequently,
\begin{eqnarray*}&&
\iint_{\R^{2N}}\frac{|u(x)-u(y)|^p}{|x-y|^{N+sp}}\,dx\,dy\\
&&\qquad\le\frac{C_0}{1-s}\|\nabla u\|_{L^p(\R^N)}^p
+\iint_{\R^{2N}\cap\{|x-y|\ge1\}}\frac{(|u(x)|+|u(y)|)^p}{|x-y|^{N+sp}}\,dx\,dy\\
&&\qquad\le\frac{C_0}{1-s}\|\nabla u\|_{L^p(\R^N)}^p+\frac{C_1}{s}\| u\|_{L^p(\R^N)}^p,
\end{eqnarray*}
for some~$C_1=C_1(N,p)>0$.

This and the classical Sobolev-Poincar\'e Inequality yield that
$$ \iint_{\R^{2N}}\frac{|u(x)-u(y)|^p}{|x-y|^{N+sp}}\,dx\,dy\le C_2\left(
\frac{1}{1-s}+\frac1s\right)\|\nabla u\|_{L^p(\R^N)}^p$$
for some~$C_2=C_2(N,\Omega,p)>0$ and the desired result follows.

Now we suppose~$S\ne1$. If~$s=0$, the desired result follows from Proposition~\ref{PRBMI}.
If instead~$s\in(0,1)$, we use~\cite[Theorem~3.9]{MR4525724} with~$\alpha:=1$ and we find that
\begin{eqnarray*}&& \|u\|_{L^p(\R^N)}+\left(\min\{s,1-s\} \iint_{\R^{2N}}\frac{|u(x)-u(y)|^p}{|x-y|^{N+sp}}\,dx\,dy\right)^{1/p}
\\&&\qquad\le
C_3\,\left(\|u\|_{L^p(\R^N)}+\left(\min\{S,1-S\} \iint_{\R^{2N}}\frac{|u(x)-u(y)|^p}{|x-y|^{N+Sp}}\,dx\,dy
\right)^{1/p}\right),\end{eqnarray*}
for some~$C_3=C_3(N,p)>0$, and therefore
\begin{equation*}
[u]_{s,p}\le\|u\|_{L^p(\R^N)}+[u]_{s,p}\le C_4\,\Big(\|u\|_{L^p(\R^N)}+[u]_{S,p}\Big),
\end{equation*}
for some~$C_4=C_4(N,p)>0$.

This and Proposition~\ref{PRBMI} give the desired result.
\end{proof}

\section{Functional setting towards the proof of Theorem~\ref{mainp1}}\label{BaUPS}

Here we construct suitable functional spaces which are helpful to prove Theorem~\ref{mainp1}.

For this, given a measure~$\mu^+$ satisfying~\eqref{mu00}, we set
$$ \rho_p(u):=\left( \,\int_{[0,1]}[u]_{s,p}^p \,d\mu^+(s)\right)^{1/p}$$
and define~$\mathcal{X}_p(\Omega)$ as the set of measurable functions~$u:\R^N\to\R$ such that~$u=0$
in~$\R^N\setminus\Omega$ and~$\rho_p(u)<+\infty$.

In this setting, we can ``reabsorb'' the negative part of the signed measure~$\mu$:

\begin{proposition}\label{absorb} Let $p\in (1, N)$ and assume that~\eqref{mu3} and~\eqref{mu2} hold.

Then, there exists~$c_0=c_0(N,\Omega, p)>0$ such that,
for any~$u\in\mathcal{X}_p(\Omega)$, we have
\[
\int_{[{{ 0 }}, \overline s]} [u]_{s, p}^p \, d\mu^- (s) \le c_0\,\gamma \int_{[\overline s, 1]} [u]^p_{s, p} \, d\mu(s).
\]
\end{proposition}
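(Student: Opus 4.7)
The plan is to combine the uniform Sobolev inequality from Theorem~\ref{SOBOLEVEMBEDDING} with the structural assumption~\eqref{mu2} via a double integration argument. The key observation is that for any $s\in[0,\overline s]$ and any $S\in[\overline s,1]$ we have $s\le\overline s\le S$, so Theorem~\ref{SOBOLEVEMBEDDING} provides a constant $C=C(N,\Omega,p)$ with
\[
[u]_{s,p}^p\le C^p\,[u]_{S,p}^p,
\]
where, crucially, $C$ does not depend on either $s$ or $S$.

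Next I would integrate this pointwise bound in the $S$ variable with respect to $\mu^+$ over $[\overline s,1]$. Since the left-hand side does not depend on $S$, this produces
\[
\mu^+([\overline s,1])\,[u]_{s,p}^p\le C^p\int_{[\overline s,1]}[u]_{S,p}^p\,d\mu^+(S).
\]
I then integrate in $s$ with respect to $\mu^-$ over $[0,\overline s]$ (the right-hand side is now independent of $s$), obtaining
\[
\mu^+([\overline s,1])\int_{[0,\overline s]}[u]_{s,p}^p\,d\mu^-(s)\le C^p\,\mu^-([0,\overline s])\int_{[\overline s,1]}[u]_{S,p}^p\,d\mu^+(S).
\]

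At this point I would invoke~\eqref{mu2} on the right-hand side to replace $\mu^-([0,\overline s])$ by $\gamma\,\mu^+([\overline s,1])$, and then divide both sides by the positive quantity $\mu^+([\overline s,1])$ (which is strictly positive thanks to~\eqref{mu00}). This yields
\[
\int_{[0,\overline s]}[u]_{s,p}^p\,d\mu^-(s)\le C^p\gamma\int_{[\overline s,1]}[u]_{S,p}^p\,d\mu^+(S).
\]
Finally, by~\eqref{mu3} the measures $\mu^+$ and $\mu=\mu^+-\mu^-$ coincide on $[\overline s,1]$, so the integral against $d\mu^+(S)$ can be rewritten against $d\mu(S)$, and the proposition follows with $c_0:=C^p$.

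I expect no serious obstacle beyond some bookkeeping: one should quickly verify that the quantities involved are finite (which follows from $u\in\mathcal{X}_p(\Omega)$ together with Theorem~\ref{SOBOLEVEMBEDDING} for the $s\in[0,\overline s]$ integrand) and that the trivial cases $\gamma=0$ or $\mu^-([0,\overline s])=0$ are handled automatically by the inequality. The real content of the argument is packed into the uniform Sobolev embedding of Theorem~\ref{SOBOLEVEMBEDDING}; once that is in hand, the present proposition is essentially a Fubini-style manipulation combined with~\eqref{mu2}.
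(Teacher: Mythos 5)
Your proof is correct and follows essentially the same strategy as the paper's: apply the uniform Sobolev embedding of Theorem~\ref{SOBOLEVEMBEDDING}, then use~\eqref{mu2} to trade $\mu^-$-mass for $\mu^+$-mass and~\eqref{mu3} to rewrite $d\mu^+$ as $d\mu$ on~$[\overline s,1]$. The only structural difference is that you compare $[u]_{s,p}$ directly with $[u]_{S,p}$ for $s\le\overline s\le S$ and integrate in both variables, whereas the paper passes twice through the intermediate seminorm $[u]_{\overline s,p}$ (once for $s\le\overline s$, once for $\overline s\le S$); your double-integration is slightly cleaner and yields the better constant $c_0=C^p$ instead of the paper's $C^{2p}$.

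One small flaw to repair: in order to divide by $\mu^+([\overline s,1])$ you invoke~\eqref{mu00}, but \eqref{mu00} is \emph{not} among the hypotheses of Proposition~\ref{absorb}; only~\eqref{mu3} and~\eqref{mu2} are assumed there. You should therefore treat the case $\mu^+([\overline s,1])=0$ separately: by~\eqref{mu2} that equality forces $\mu^-([0,\overline s])=0$, so the left-hand side of the claimed inequality vanishes and the conclusion holds trivially. The paper in fact opens its proof with exactly this observation; once you add it, your argument is complete and self-contained.
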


\begin{proof}
We observe that if~$\mu^+([\overline s,1])=0$, condition~\eqref{mu2} implies that~$\mu^-([0,\overline s])=0$ and then the result is proved.

Thus, for the remaining part of this proof we assume that~$\mu^+([\overline s,1])>0$.

By using Theorem~\ref{SOBOLEVEMBEDDING} with $s$ and $S:=\overline s$ we get,
\[
[u]_{s, p}\le C(N,\Omega,p) [u]_{\overline s, p}.
\]
Furthermore,  employing Theorem~\ref{SOBOLEVEMBEDDING} with $s:=\overline s$ and $S:= s$, we have
\[
[u]_{\overline s, p}\le C(N,\Omega,p) [u]_{s, p}.
\]
Hence, by using the previous inequalities together with the assumptions~\eqref{mu3}
and~\eqref{mu2}, we obtain that
\begin{eqnarray*}
&& \int_{[{{{0}}}, \overline s]} [u]_{s, p}^p \, d\mu^-(s) \le C^p(N,\Omega,p)
\int_{[{{{0}}}, \overline s]} [u]_{\overline s, p}^p \, d\mu^-(s) 
= C^p(N,\Omega,p)\, [u]_{\overline s, p}^p \,\mu^-([{{{0}}}, \overline s]) \\
&&\qquad \le C^p(N,\Omega,p)\gamma
\, [u]_{\overline s, p}^p \,\mu^+([ \overline s,1]) =
C^p(N,\Omega,p)\gamma
\int_{[\overline s,1]} [u]_{\overline s, p}^p \, d\mu^+(s)\\
&&\qquad\le C^{2p}(N,\Omega,p)\gamma
\int_{[\overline s,1]} [u]_{ s, p}^p \, d\mu^+(s) = C^{2p}(N,\Omega,p)\gamma \int_{[\overline s,1]} [u]_{ s, p}^p \, d\mu(s),
\end{eqnarray*}
which gives the desired result with $c_0:= C^{2p}(N,\Omega,p)$.
\end{proof}

We notice that
\begin{equation}\label{complete00}
{\mbox{$\mathcal X_p(\Omega)$ is complete}}\end{equation}
(the case~$p=2$ being proved in~\cite{CATERINA},
the general case being similar).

Also, we have that:

\begin{lemma}\label{UNCONCVE} $\mathcal X_p(\Omega)$ is
a uniformly convex  space.\end{lemma}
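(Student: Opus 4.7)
The plan is to realize $\mathcal X_p(\Omega)$ isometrically as a closed subspace of a uniformly convex Banach space $Y$; since uniform convexity is a purely metric property, it descends to closed linear subspaces, and the lemma then follows. A natural choice for $Y$ is the $\ell^p$-direct sum
\[
Y:=L^p(\R^N,d\sigma_0)\,\oplus_p\, L^p\bigl((0,1)\times\R^{2N},d\nu_m\bigr)\,\oplus_p\, L^p(\R^N,d\sigma_1;\R^N),
\]
where $d\sigma_0:=\mu^+(\{0\})\,dx$ and $d\sigma_1:=\mu^+(\{1\})\,dx$ are (possibly zero) multiples of Lebesgue measure on $\R^N$, and $d\nu_m:=c_{N,s,p}\,|x-y|^{-(N+sp)}\,dx\,dy\,d\mu^+(s)$ on $(0,1)\times\R^{2N}$. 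The linear map
\[
T\colon\mathcal X_p(\Omega)\to Y,\qquad Tu:=\bigl(u,\,(s,x,y)\mapsto u(x)-u(y),\,\nabla u\bigr),
\]
is then, by inspection, an isometry, since splitting $\rho_p(u)^p=\int_{[0,1]}[u]_{s,p}^p\,d\mu^+(s)$ into the contributions of the atoms at $s=0,1$ and of the remaining mass on $(0,1)$ yields exactly $\|Tu\|_Y^p$.

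Once the isometric embedding is in place, the conclusion is automatic. Each summand of $Y$ is uniformly convex for $p\in(1,+\infty)$: the first two by the classical Clarkson inequalities, and the third because the $L^p$ Bochner space of a Hilbert target is uniformly convex; an $\ell^p$-direct sum of finitely many uniformly convex Banach spaces is itself uniformly convex, so $Y$ is uniformly convex. Since $\mathcal X_p(\Omega)$ is complete by~\eqref{complete00} and $T$ is an isometry, $T(\mathcal X_p(\Omega))$ is complete, hence closed in $Y$, and therefore uniformly convex; transporting back via $T$, the same holds for $\mathcal X_p(\Omega)$.

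The only subtlety worth flagging is the injectivity of $T$, which is equivalent to $\rho_p$ being a norm and not merely a seminorm. If $\rho_p(u)=0$, then $[u]_{s,p}=0$ for $\mu^+$-almost every $s$, and by~\eqref{mu00} there exists some $s_0\in[\overline s,1]$ with $[u]_{s_0,p}=0$; combined with the Dirichlet condition $u\equiv 0$ on $\R^N\setminus\Omega$, this forces $u\equiv 0$. I do not expect any deeper obstacle in the argument.
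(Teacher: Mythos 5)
The proposal is correct, and it takes a more structural route than the paper's proof, though the two arguments share the same underlying idea. The paper verifies the uniform convexity inequality directly in $\mathcal X_p(\Omega)$, splitting into $p\ge 2$ (a pointwise Clarkson inequality, integrated) and $p\in(1,2)$ (Meir's quantitative Clarkson-type inequality for $L^p$ functions on the product space $\R^{2N}\times[0,1]$), and in each case produces an explicit modulus of convexity $\delta(\eps)$. You instead make the implicit $L^p$-realisation explicit: you build an isometry $T$ of $\mathcal X_p(\Omega)$ onto a closed subspace of an $\ell^p$-direct sum of $L^p$ spaces and quote the standard facts that $L^p(\mu)$, $L^p(\mu;H)$ for $H$ Hilbert, and finite $\ell^p$-sums of uniformly convex spaces are uniformly convex, together with stability of uniform convexity under passing to closed subspaces. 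This is cleaner in that it avoids the case split on $p$ and handles the atomic contributions of $\mu^+$ at $s=0$ and $s=1$ by genuine summands rather than by the paper's tacit abuse of notation; the trade-off is that you lean on a wider collection of (entirely standard) black-box results and do not extract the explicit modulus $\delta(\eps)$ that the paper's computation yields. Your injectivity check is the right thing to flag and is handled correctly: $\rho_p(u)=0$ gives $[u]_{s_0,p}=0$ for some $s_0\ge\overline s>0$ by \eqref{mu00}, hence $u$ is constant, hence $u\equiv 0$ by the exterior Dirichlet condition.
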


\begin{proof} We need to prove that
for every~$\eps\in(0,2]$ there exists~$\delta>0$ 
such that if~$u$, $v\in{\mathcal{X}}_p(\Omega)$ are such that~$\rho_p(u)=\rho_p(v)=1$ and~$\rho_p(u-v)\geq\eps$,
then~$\rho_p(u+v)\le2-\delta$.

To this end, 
we modify some classical approaches
(see e.g.~\cite{MR748950, MR3859645} and the references therein). The details are not completely obvious and go as follows.

Let us first deal with the case~$p\in[2,+\infty)$. In this case, for all~$a$, $b\in\R$, we have that
\begin{equation}\label{PPlas}
|a+b|^p+|a-b|^p\le2^{p-1}\big( |a|^p+|b|^p\big),
\end{equation}
see~\cite[Theorem~2]{MR1501880}.


Hence, we use~\eqref{PPlas} with
\begin{equation*}
a:=u(x)-u(y)\qquad{\mbox{and}}\qquad b:=v(x)-v(y)\end{equation*} and we find that
\begin{eqnarray*}
[\rho_p(u+v)]^p &=&\int_{[0,1]}\left( c_{N,s,p}\iint_{\R^{2N}}\frac{|u(x)+v(x)-u(y)-v(y)|^p}{|x-y|^{N+sp}}\,dx\,dy\right)\,d\mu^+(s)
\\&\le&
2^{p-1}\Bigg[\;
\int_{[0,1]}\left( c_{N,s,p}\iint_{\R^{2N}}\frac{|u(x)-u(y)|^p}{|x-y|^{N+sp}}\,dx\,dy\right)\,d\mu^+(s)\\&&\qquad\qquad
+
\int_{[0,1]}\left( c_{N,s,p}\iint_{\R^{2N}}\frac{|v(x)-v(y)|^p}{|x-y|^{N+sp}}\,dx\,dy\right)\,d\mu^+(s)
\Bigg]\\&&\qquad
-\int_{[0,1]}\left( c_{N,s,p}\iint_{\R^{2N}}\frac{|u(x)-v(x)-u(y)+v(y)|^p}{|x-y|^{N+sp}}\,dx\,dy\right)\,d\mu^+(s)\\
&=& 2^{p-1}\Big[
[\rho_p(u)]^p+[\rho_p(v)]^p
\Big]-[\rho_p(u-v)]^p\\
&\le& 2^{p}-\eps^p\\&=&(2-\delta)^p,
\end{eqnarray*}
with
$$ \delta=\delta(\eps):=2-\big(2^p-\eps^p\big)^{1/p}.$$
We stress that~$\delta$ is strictly increasing in~$\eps\in(0,2]$, therefore~$\delta>\delta(0)=0$, and the proof
of the uniform convexity in this case is thereby complete.

If instead~$p\in(1,2)$, 
we use that, for every~$\Phi$, $\Psi\in L^p(\R^{2N})$,
\begin{equation}\label{ShbsnTA}\begin{split}&
\left\| \frac{|\Phi|+|\Psi|}2\right\|_{L^p(\R^{2N})}^{2-p}
\left( \frac{\|\Phi\|_{L^p(\R^{2N})}^p+\|\Psi\|_{L^p(\R^{2N})}^p}2-
\left\| \frac{\Phi+\Psi}2\right\|_{L^p(\R^{2N})}^p\right)\\&\qquad\qquad\ge\frac{p(p-1)}8 \|\Phi-\Psi\|_{L^p(\R^{2N})}^2,\end{split}
\end{equation}
see~\cite[Theorem~1]{MR748950}.

We choose
$$ \Phi(x,y,s):=c_{N, s, p}^{\frac1p}\,\frac{u(x)-u(y)}{|x-y|^{\frac{N+sp}p}}\qquad{\mbox{and}}\qquad
\Psi(x,y,s):=c_{N, s, p}^{\frac1p}\,\frac{v(x)-v(y)}{|x-y|^{\frac{N+sp}p}}.$$
In this way,
\begin{eqnarray*}
&&\|\Phi\|_{L^p(\R^{2N}\times [0, 1])}=\rho_p(u)=1,\\
&&\|\Psi\|_{L^p(\R^{2N}\times [0, 1])}=\rho_p(v)=1,\\
&& \|\Phi+\Psi\|_{L^p(\R^{2N}\times [0, 1])}=\rho_p(u+v),\\
&& \|\Phi-\Psi\|_{L^p(\R^{2N}\times [0, 1])}=\rho_p(u-v)\ge\eps\\{\mbox{and }}&&
\| |\Phi|+|\Psi|\|_{L^p(\R^{2N}\times [0, 1])}\le\| \Phi\|_{L^p(\R^{2N}\times [0, 1])}+\|\Psi\|_{L^p(\R^{2N}\times [0, 1])}=\rho_p(u)+\rho_p(v)=2.
\end{eqnarray*}
Therefore, in light of~\eqref{ShbsnTA},
$$ 1-\frac{[\rho_p(u+v)]^p}{2^p}\ge\frac{p(p-1)}8 \eps^2
$$
and, as a result,
\begin{eqnarray*}
\rho_p(u+v) \le 2\left(1-\frac{\eps^2p(p-1)}8 \right)^{1/p}=2-\delta,
\end{eqnarray*}
with
$$ \delta=\delta(\eps):=2\left[1-\left(1-\frac{\eps^2p(p-1)}8 \right)^{1/p}\right].$$
We stress that~$\eps^2p(p-1)<8$ in this case, whence~$\delta$ is strictly increasing in~$\eps\in(0,2]$. This entails that~$\delta>\delta(0)=0$, thus completing the proof
of the uniform convexity property.
\end{proof}

The setting of solutions that we consider here is the one induced by this functional framework, namely:

\begin{definition}\label{wsol}
A weak solution of problem~\eqref{mainp} is a function~$u \in \mathcal{X}_p(\Omega)$ such that\footnote{We stress that
expressions such as
$$ \int_{[{{{0}}}, 1]}\left(\;\iint_{\R^{2N}} \frac{c_{N,s,p}\,|u(x) - u(y)|^{p-2}\, (u(x) - u(y))\, (v(x) - v(y))}{|x - y|^{N+sp}}\, dx dy\right) d\mu(s)$$
are a slight abuse of notation. To be precise, one should write instead
\begin{eqnarray*}&&
\int_{({{{0}}}, 1)}\left(\;\iint_{\R^{2N}} \frac{c_{N,s,p}\,|u(x) - u(y)|^{p-2}\, (u(x) - u(y))\, (v(x) - v(y))}{|x - y|^{N+sp}}\, dx dy\right) d\mu
(s)\\
&&\qquad\qquad+
\mu(0)\int_{\R^N} u(x)\,v(x)\,dx+\mu(1)\int_{\R^N} \nabla u(x)\cdot\nabla v(x)\,dx.\end{eqnarray*}
For the sake of shortness, however, we will accept the above abuse of notation whenever typographically convenient.}
\[
\begin{split}
&\int_{[{{{0}}}, 1]}\left(\;\iint_{\R^{2N}} \frac{c_{N,s,p}\,|u(x) - u(y)|^{p-2}\, (u(x) - u(y))\, (v(x) - v(y))}{|x - y|^{N+sp}}\, dx dy\right) d\mu^+(s)\\[5pt]
&-\int_{[{{{0}}}, \overline s]}\left(\;\iint_{\R^{2N}} \frac{c_{N,s,p}\,|u(x) - u(y)|^{p-2}\, (u(x) - u(y))\, (v(x) - v(y))}{|x - y|^{N+sp}}\, dx dy\right) d\mu^-(s)\\[5pt]
&= \lambda \int_\Omega |u|^{p-2}\, uv\, dx + \int_\Omega |u|^{p_{s_\sharp}^* - 2}\, uv\, dx,
\end{split}
\]
for all~$v \in \mathcal{X}_p(\Omega)$.
\end{definition}

Solutions of problem~\eqref{mainp} coincide with the critical points of the functional~$E: \mathcal X_p(\Omega)\to\R$ given by
\begin{equation}\label{funp}
E_p(u) =\frac1p [\rho_p(u)]^p -\frac1p\int_{[{{{0}}}, \overline s]} [u]_{s, p}^p \, d\mu^-(s) - \frac{\lambda}{p} \int_\Omega |u|^p \, dx - \frac{1}{p_{s_\sharp}^*} \int_\Omega |u|^{p_{s_\sharp}^*}\, dx.
\end{equation}

\section{Proof of Theorem~\ref{mainp1}}\label{BaUPS-2}

We now prove Theorem~\ref{mainp1}. The strategy is to reduce ourselves to the abstract setting introduced in Section~\ref{AB2}
and exploit Theorem~\ref{Theorem 2.9}.

To this end, as already anticipated in Section~\ref{AB2}, we choose 
\begin{equation}\label{SETTING}\begin{split}
&W:={\mathcal{X}}_p(\Omega),\qquad A_pu:=\int_{[0, 1]} (-\Delta)_p^{s} u \, d\mu^+(s),\qquad B_pu:=|u|^{p-2}u,\\&
L_pu:=\int_{[0, 1]} (-\Delta)_p^{s} u \, d\mu^-(s)
\qquad{\mbox{and}}\qquad f(u):= |u|^{p_{s_\sharp}^* - 2}\, u .
\end{split}\end{equation}
Our goal is now to systematically check that the abstract hypotheses requested in Section~\ref{AB2} are fulfilled.
Specifically, we need to check~\eqref{UNICO}, \eqref{UNICO2}, \eqref{2.888-0}, \eqref{2.888-01}, \eqref{lppotop}, as well as the structural conditions~$(A_1)$, $(A_2)$, $(B_1)$, $(B_2)$, $(B_3)$, $(L_1)$, $(\mathcal F_1)$, $(\mathcal F_2)$, $(\mathcal F_3)$ and~$(\mathcal N_1)$.\medskip

Let us proceed in order. 
First of all, by~\eqref{complete00} and Lemma~\ref{UNCONCVE}, we have that~$\mathcal X_p(\Omega)$ is
a uniformly convex Banach space, which is the claim in~\eqref{UNICO}.

Regarding the claim in~\eqref{UNICO2}, we first need to interpret~$A_p$ and~$B_p$
as operators from~$W$ to its dual, namely we rephrase~\eqref{SETTING}, given~$u$, $v\in{\mathcal{X}}_p(\Omega)$, as
\begin{eqnarray*}&&
( A_pu,v)=\int_{\R^N}\left(\;\int_{[0, 1]} (-\Delta)_p^{s} u(x)\,v(x) \, d\mu^+(s)\right)\,dx\\{\mbox{and }}&&(B_pu,v)=\int_{\R^N} |u(x)|^{p-2}u(x)\,v(x)\,dx.
\end{eqnarray*}
We observe that:

\begin{lemma}\label{APC}
$A_p$ is continuous from~$W$ to~$W^*$.\end{lemma}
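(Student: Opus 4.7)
The plan is to recast the pairing $(A_pu,v)$ as an integral in a single $L^p$-type space and thereby reduce the continuity of $A_p$ to that of a classical Nemytskii operator. Concretely, I would introduce the measure $d\nu(x,y,s):=c_{N,s,p}\,dx\,dy\,d\mu^+(s)$ on $\R^{2N}\times(0,1)$ and the linear map
$$Tu(x,y,s):=\frac{u(x)-u(y)}{|x-y|^{(N+sp)/p}},$$
with the natural endpoint interpretations $[u]_{0,p}=\|u\|_{L^p(\R^N)}$ and $[u]_{1,p}=\|\nabla u\|_{L^p(\R^N)}$ in case $\mu^+$ has atoms at $s=0$ or $s=1$. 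By the very definition of $\rho_p$, the map $T$ is a linear isometry from $\mathcal X_p(\Omega)$ into $L^p(d\nu)$, and the algebraic identity
$$\frac{|u(x)-u(y)|^{p-2}(u(x)-u(y))(v(x)-v(y))}{|x-y|^{N+sp}}=|Tu|^{p-2}\,Tu\cdot Tv$$
yields
$$(A_pu,v)=\int |Tu|^{p-2}\,Tu\cdot Tv\,d\nu$$
for all $u,v\in\mathcal X_p(\Omega)$.

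First I would apply H\"older's inequality with conjugate exponents $p/(p-1)$ and $p$ to get
$$|(A_pu,v)|\le\|Tu\|_{L^p(d\nu)}^{p-1}\|Tv\|_{L^p(d\nu)}=\rho_p(u)^{p-1}\,\rho_p(v),$$
so that $A_pu\in W^*$ with $\|A_pu\|_{W^*}\le\rho_p(u)^{p-1}$. This gives the boundedness of $A_p$ on bounded sets; it then remains to upgrade this to continuity.

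For continuity, suppose $u_n\to u$ in $\mathcal X_p(\Omega)$. Since $T$ is a linear isometry, $Tu_n\to Tu$ in $L^p(d\nu)$. The key input is the continuity of the Nemytskii operator $\mathcal J\colon L^p(d\nu)\to L^{p/(p-1)}(d\nu)$, $\mathcal J(w):=|w|^{p-2}w$. Granted this, a final H\"older estimate
$$|(A_pu_n-A_pu,v)|\le\bigl\|\mathcal J(Tu_n)-\mathcal J(Tu)\bigr\|_{L^{p/(p-1)}(d\nu)}\,\rho_p(v)$$
yields $\|A_pu_n-A_pu\|_{W^*}\to 0$, as desired. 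The main (and really the only nontrivial) obstacle is therefore the Nemytskii continuity, which one checks by splitting into the regimes $p\ge 2$ and $1<p<2$ and invoking the elementary pointwise inequalities $\bigl||a|^{p-2}a-|b|^{p-2}b\bigr|\le C_p(|a|+|b|)^{p-2}|a-b|$ and $\bigl||a|^{p-2}a-|b|^{p-2}b\bigr|\le C_p|a-b|^{p-1}$ respectively, followed by one more application of H\"older's inequality (or, equivalently, by invoking Krasnoselskii's superposition theorem after extracting an a.e.-convergent subsequence). The possible atoms of $\mu^+$ at the endpoints $s\in\{0,1\}$ contribute separate classical $L^p$ and gradient pieces and are dealt with by the same argument.
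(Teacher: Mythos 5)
Your proposal is correct and takes essentially the same starting point as the paper — namely, recasting the duality pairing via the fractional difference quotients and applying H\"older with exponents $p/(p-1)$ and $p$ to obtain $|(A_pu,v)|\le\rho_p(u)^{p-1}\rho_p(v)$ — but it then goes genuinely further than what the paper writes. The paper's proof stops at this estimate and declares that it "yields the desired result," which literally only establishes boundedness of $A_p$ (i.e.\ $\|A_pu\|_{W^*}\le\rho_p(u)^{p-1}$); since $A_p$ is nonlinear for $p\ne 2$, this bound by itself does not give continuity. You correctly identify the missing step and supply it: the isometric embedding $T\colon \mathcal X_p(\Omega)\hookrightarrow L^p(d\nu)$ reduces the continuity of $A_p$ to the continuity of the Nemytskii map $w\mapsto |w|^{p-2}w$ from $L^p(d\nu)$ to $L^{p/(p-1)}(d\nu)$, and the elementary inequalities
\[
\bigl||a|^{p-2}a-|b|^{p-2}b\bigr|\le C_p(|a|+|b|)^{p-2}|a-b|\quad (p\ge 2),
\qquad
\bigl||a|^{p-2}a-|b|^{p-2}b\bigr|\le C_p|a-b|^{p-1}\quad (1<p<2),
\]
combined with one more H\"older application, finish the job. (The second of these inequalities is in fact proved in the paper, but later, inside the compactness lemma for $B_p$.) Your remark about atoms of $\mu^+$ at $s\in\{0,1\}$ is also the right caveat, since the difference-quotient formula for $T$ degenerates there; those pieces are handled identically by the standard $L^p$ and gradient Nemytskii arguments. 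In short: your approach is the same in its main computation, but more complete as a proof of the stated continuity claim.
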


\begin{proof} We notice that
\begin{equation}\label{hdsu3y259676tgrhueh}\begin{split}
&\int_{\R^N} (-\Delta)_p^{s} u(x)\,v(x)\,dx\\
=\;&
2c_{N,s,p} \,\lim_{\eps \searrow 0} \int_{\R^N}\left[\;
\int_{\R^N \setminus B_\eps(x)} \frac{|u(x) - u(y)|^{p-2}\, (u(x) - u(y)) v(x)}{|x - y|^{N+sp}}\, dy\right]\,dx
\\=\;& {c_{N,s,p}} \,\iint_{\R^{2N}} \frac{|u(x) - u(y)|^{p-2}\, (u(x) - u(y))( v(x)-v(y))}{|x - y|^{N+sp}}\, dx\,dy.
\end{split}\end{equation}
The H\"older Inequality with exponents~$\frac{p}{p-1}$ and~$p$ 
applied to the functions
\begin{equation}\label{2Aalsdx029oiefkvHO-bn}
\frac{c_{N,s,p}^{(p-1)/p}\,|u(x) - u(y)|^{p-2}\, (u(x) - u(y))}{|x - y|^{(N+sp)(p-1)/p}}
\qquad{\mbox{ and }}\qquad
\frac{c_{N,s,p}^{1/p}\,( v(x)-v(y))}{|x - y|^{(N+sp)/p}}\end{equation}
returns that
\begin{equation}\label{2Aalsdx029oiefkvHO}\begin{split}
|(A_pu,v)|&=
\left|\;\;\iint_{[0,1]\times\R^N} (-\Delta)_p^{s} u(x)\,v(x)\,d\mu^+(s)\,dx\right|\\
&\le
\left[\;\; \iiint_{[0,1]\times\R^{2N}} c_{N,s,p}\frac{|u(x) - u(y)|^p}{|x - y|^{N+sp}}\,d\mu^+(s)\, dx\,dy\right]^{(p-1)/p}\\&\qquad\qquad\times
\left[\;\;\iiint_{[0,1]\times\R^{2N}} c_{N,s,p}\frac{| v(x)-v(y)|^p}{|x - y|^{N+sp}}\,d\mu^+(s)\, dx\,dy\right]^{1/p}
\\&=\left(\;\int_{[0,1]} [u]_{s,p}^p\,d\mu^+(s)\right)^{(p-1)/p}\left(\;\int_{[0,1]} [v]_{s,p}^p\,d\mu^+(s)\right)^{1/p}
\\&=[\rho_p(u)]^{p-1}\,\rho_p(v).
\end{split}\end{equation}
This gives that
\begin{equation}\label{Aalsdx029oiefkv}
|(A_pu,v)|\le[\rho_p(u)]^{p-1}\,\rho_p(v),\end{equation}
which yields the desired result.\end{proof}

Moreover, we have that:

\begin{lemma}\label{BPC}
$B_p$ is continuous from~$W$ to~$W^*$.\end{lemma}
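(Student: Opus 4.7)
The plan is to proceed in parallel with the proof of Lemma~\ref{APC}, first establishing boundedness of~$B_p$ as an operator from~$W$ to~$W^*$ via H\"older's inequality and the uniform Sobolev embedding, and then upgrading boundedness to continuity through the standard continuity of the Nemitsky map~$t\mapsto|t|^{p-2}t$.

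First I would note that Proposition~\ref{PRBMI} together with condition~\eqref{mu00} gives a constant~$C_1=C_1(N,\Omega,p,\mu^+)>0$ such that
$$\|u\|_{L^p(\R^N)}^p\,\mu^+([\overline s,1])\le\int_{[\overline s,1]}\|u\|_{L^p(\R^N)}^p\,d\mu^+(s)\le C_0^p\int_{[\overline s,1]}[u]_{s,p}^p\,d\mu^+(s)\le C_0^p\,[\rho_p(u)]^p,$$
so that~$\|u\|_{L^p(\Omega)}\le C_1\,\rho_p(u)$ for every~$u\in\mathcal X_p(\Omega)$. Applying H\"older's inequality with conjugate exponents~$p/(p-1)$ and~$p$ then yields
$$|(B_pu,v)|\le\int_\Omega|u(x)|^{p-1}|v(x)|\,dx\le\|u\|_{L^p(\Omega)}^{p-1}\,\|v\|_{L^p(\Omega)}\le C_1^{p}\,[\rho_p(u)]^{p-1}\,\rho_p(v),$$
which shows that~$B_p(u)\in W^*$ and that~$B_p$ maps bounded sets to bounded sets.

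Next, to establish continuity, I would take a sequence~$u_k\to u$ in~$W$, meaning~$\rho_p(u_k-u)\to 0$. By the embedding above, $u_k\to u$ in~$L^p(\Omega)$. The dual-norm estimate becomes
$$\dnorm{B_pu_k-B_pu}=\sup_{\rho_p(v)\le 1}|(B_pu_k-B_pu,v)|\le C_1\,\bigl\||u_k|^{p-2}u_k-|u|^{p-2}u\bigr\|_{L^{p/(p-1)}(\Omega)},$$
again by H\"older and the uniform embedding applied to the test function~$v$. Hence the lemma reduces to the continuity of the Nemitsky operator~$N(u):=|u|^{p-2}u$ as a map from~$L^p(\Omega)$ to~$L^{p/(p-1)}(\Omega)$: since~$|N(u)|^{p/(p-1)}=|u|^p$, this is a classical Krasnoselski\u{\i}-type result. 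One can verify it directly by passing to a subsequence converging almost everywhere and with a common~$L^p$ dominant, and applying the pointwise bound~$\bigl||a|^{p-2}a-|b|^{p-2}b\bigr|\le C_p(|a|+|b|)^{p-2}|a-b|$ when~$p\ge 2$, or~$\bigl||a|^{p-2}a-|b|^{p-2}b\bigr|\le C_p|a-b|^{p-1}$ when~$1<p<2$, followed by the Vitali convergence theorem.

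The main conceptual step is the reduction of the problem on the highly nonstandard space~$\mathcal X_p(\Omega)$ to a question about the standard Lebesgue space~$L^p(\Omega)$; once the uniform Sobolev embedding of Proposition~\ref{PRBMI} is invoked, the rest is classical. I expect no real obstacle beyond carefully selecting the subsequence for the Nemitsky step when~$1<p<2$, since there the map fails to be locally Lipschitz and one must rely on H\"older-type pointwise bounds together with the fact that~$|u_k|^p$ is equi-integrable by virtue of its~$L^1$-convergence.
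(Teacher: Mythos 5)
Your argument is correct, and in fact it is more complete than the paper's own proof of this lemma. For the boundedness estimate, the paper proceeds slightly differently from you: rather than first deriving the embedding $\|u\|_{L^p(\R^N)}\le C_1\,\rho_p(u)$ from Proposition~\ref{PRBMI} by integrating over $[\overline s,1]$ as you do, the paper applies H\"older to get $|(B_pu,v)|\le\|u\|_{L^p}^{p-1}\|v\|_{L^p}\le\widetilde C\,[u]_{s,p}^{p-1}[v]_{s,p}$ for every $s\in[0,1]$, then integrates this inequality in $s$ with respect to $\mu^+$ and applies H\"older once more to reach $|(B_pu,v)|\le\widehat C\,[\rho_p(u)]^{p-1}\rho_p(v)$. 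Both routes give the same dual-norm bound; yours factors through the embedding $\mathcal X_p(\Omega)\hookrightarrow L^p(\Omega)$ as a standalone fact, which is arguably cleaner for reuse.

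Where you go genuinely further is the continuity step. The paper's proof of Lemma~\ref{BPC} ends at the operator-norm bound and declares ``which establishes the desired result,'' leaving the passage from boundedness of a nonlinear operator to its continuity implicit. Your reduction to the continuity of the Nemytskii map $u\mapsto|u|^{p-2}u$ from $L^p(\Omega)$ to $L^{p/(p-1)}(\Omega)$, together with the pointwise estimates $\big||a|^{p-2}a-|b|^{p-2}b\big|\le(p-1)(|a|^{p-2}+|b|^{p-2})|a-b|$ for $p\ge 2$ and $\big||a|^{p-2}a-|b|^{p-2}b\big|\le C_p|a-b|^{p-1}$ for $1<p<2$, is exactly the missing ingredient. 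Notably, these are precisely the inequalities the paper does use later, in the proof of the compactness hypothesis $(B_3)$, where it shows that $u_n\to u$ in $L^p(\Omega)$ implies $B_pu_n\to B_pu$ in $W^*$; that argument subsumes sequential continuity, so the paper's account is ultimately sound, but your version supplies it at the logically appropriate point. In short: same estimate, plus a continuity argument that the paper defers to the compactness lemma.
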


\begin{proof} Using the H\"older Inequality with exponents~$\frac{p}{p-1}$ and~$p$, we have that
\begin{eqnarray*}
|( B_pu,v)|\le\int_{\Omega} |u(x)|^{p-1}\,|v(x)|\,dx\le \|u\|_{L^p(\R^N)}^{p-1}\,\|v\|_{L^p(\R^N)}.
\end{eqnarray*}
Consequently, by Proposition~\ref{PRBMI}, for all~$s\in[0,1]$,
$$ |( B_pu,v)|\le\widetilde C\,[u]_{s,p}^{p-1}\,[v]_{s,p},$$
for some~$\widetilde C=\widetilde C(N,\Omega,p)>0$.

Integrating with respect to~$\mu^+$
and using again the H\"older Inequality with exponents~$\frac{p}{p-1}$ and~$p$ we deduce that
$$ |(B_pu,v)|\le\widehat C\,[\rho_p(u)]^{p-1}\,\rho_p(v),$$
for some~$\widehat C=\widehat C(N,\Omega,p,\mu)>0$, which establishes the desired result.
\end{proof}

By Lemmata~\ref{APC} and~\ref{BPC}, combined with the general result in~\eqref{ARETRH}, it follows that~$A_p$ and~$B_p$ are potential operators, and we have thus checked condition~\eqref{UNICO2}.\medskip

As for~\eqref{2.888-0}, again we have to interpret the definition of~$f$ in~\eqref{SETTING} as an operator from~$W$ to its dual,
namely, for all~$u$, $v\in\mathcal{X}_p(\Omega)$,
$$ (f(u),v)=\int_{\R^N} |u(x)|^{p_{s_\sharp}^* - 2}\, u(x)\,v(x)\,dx.$$
In this setting, we have that

\begin{lemma}\label{fcot} $f$ is continuous from~$W$ to~$W^*$.
\end{lemma}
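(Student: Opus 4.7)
The plan is to set $q := p_{s_\sharp}^*$ for convenience and to prove the lemma in three conceptual steps: a critical Sobolev embedding, boundedness of $f$, and continuity via the Nemitskii structure of $f$.

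\textbf{Step 1 (Critical embedding $\mathcal{X}_p(\Omega) \hookrightarrow L^q(\Omega)$).} I would first establish the existence of a constant $C>0$, depending on $N$, $\Omega$, $p$, $\mu^+$ and $s_\sharp$, such that $\|u\|_{L^q(\R^N)} \le C\, \rho_p(u)$ for every $u \in \mathcal{X}_p(\Omega)$. The classical fractional Sobolev inequality at the level $s_\sharp$ yields $\|u\|_{L^q(\R^N)} \le C_1 [u]_{s_\sharp, p}$, and Theorem~\ref{SOBOLEVEMBEDDING} applied with $s := s_\sharp$ and arbitrary $S \in [s_\sharp, 1]$ gives $[u]_{s_\sharp, p} \le C_2 [u]_{S, p}$. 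Raising to the $p$-th power, integrating in $S$ against $\mu^+$ on $[s_\sharp, 1]$, and dividing by $\mu^+([s_\sharp, 1])$, which is strictly positive by \eqref{scritico}, produces $[u]_{s_\sharp, p}^p \le \frac{C_2^p}{\mu^+([s_\sharp, 1])}\, [\rho_p(u)]^p$, hence the claimed embedding. Equivalently, this step is tantamount to verifying that the Sobolev constant $\mathcal S(p)$ introduced in \eqref{DESOCO} is strictly positive.

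\textbf{Step 2 (Boundedness of $f$).} For $u, v \in \mathcal{X}_p(\Omega)$, H\"older's inequality with the conjugate exponents $q/(q-1)$ and $q$ gives $|(f(u), v)| \le \|u\|_{L^q(\Omega)}^{q-1} \|v\|_{L^q(\Omega)}$, and applying Step 1 twice yields $|(f(u), v)| \le C^q [\rho_p(u)]^{q-1}\, \rho_p(v)$. Consequently $f(u) \in W^*$ and $f$ sends bounded sets of $W$ into bounded sets of $W^*$.

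\textbf{Step 3 (Continuity).} Suppose $u_n \to u$ in $W$. By Step 1, $u_n \to u$ in $L^q(\Omega)$. The Nemitskii operator $w \mapsto |w|^{q-2} w$ is continuous from $L^q(\Omega)$ to $L^{q/(q-1)}(\Omega)$: indeed, extracting an a.e.\ convergent subsequence dominated by an $L^q$ majorant and using the pointwise estimate $\big||a|^{q-2}a - |b|^{q-2}b\big| \le C_q (|a|^{q-2} + |b|^{q-2})|a-b|$ (with the analogous H\"older-type bound handling the case $1 < q < 2$), Vitali's convergence theorem yields convergence in $L^{q/(q-1)}$; a standard subsequence-of-subsequence argument upgrades this to convergence of the full sequence. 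Combining this with H\"older's inequality and Step~1, for every $v \in W$,
\[
|(f(u_n) - f(u), v)| \le \big\||u_n|^{q-2}u_n - |u|^{q-2}u\big\|_{L^{q/(q-1)}(\Omega)}\, \|v\|_{L^q(\Omega)} \le C\, \epsilon_n\, \rho_p(v),
\]
with $\epsilon_n \to 0$. Taking the supremum over $v$ with $\rho_p(v) \le 1$ yields $\dnorm{f(u_n) - f(u)} \to 0$.

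The main obstacle is Step~1: one must extract the critical Sobolev embedding $\mathcal{X}_p(\Omega) \hookrightarrow L^{p_{s_\sharp}^*}(\Omega)$ uniformly with respect to the superposition structure encoded by $\mu^+$. Once that is secured via \eqref{scritico} and the uniform inequality in Theorem~\ref{SOBOLEVEMBEDDING}, Steps~2 and 3 are straightforward adaptations of the classical arguments for a single fractional $p$-Laplacian.
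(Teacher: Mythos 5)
Your proposal is correct, and Steps~1 and~2 follow essentially the same skeleton as the paper: use the critical fractional Sobolev inequality at level $s_\sharp$, upgrade it to the full range $s\in[s_\sharp,1]$ via Theorem~\ref{SOBOLEVEMBEDDING}, integrate against $\mu^+$ on $[s_\sharp,1]$ (using $\mu^+([s_\sharp,1])>0$), and deduce the bound $|(f(u),v)|\le C\,[\rho_p(u)]^{p_{s_\sharp}^*-1}\rho_p(v)$. (As a small aside, the paper records this with the exponent $p-1$ rather than $p_{s_\sharp}^*-1$; your exponent is the correct one and the discrepancy, which is harmless for the eventual applications, appears to be a slip in the paper.)

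The genuine difference is your Step~3. The paper's proof ends at the boundedness estimate and simply declares the result, leaving the actual continuity of $u\mapsto f(u)$ implicit — presumably appealing to the standard continuity of Nemitskii operators between Lebesgue spaces, as the same pattern recurs in Lemmata~\ref{APC}, \ref{BPC} and~\ref{Lpcont}. You make this explicit: convergence $u_n\to u$ in $\mathcal{X}_p(\Omega)$ yields $u_n\to u$ in $L^{p_{s_\sharp}^*}$ by the embedding, and then the pointwise elementary inequality for $|a|^{q-2}a$ (with the separate treatment of $q<2$, paralleling the paper's estimates~\eqref{dkoejtiohnytkvfxdls} and~\eqref{pso23}) combined with Vitali's theorem and a subsequence argument gives convergence of $|u_n|^{q-2}u_n$ in $L^{q/(q-1)}$, hence $\dnorm{f(u_n)-f(u)}\to 0$. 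This makes the proof self-contained at the cost of repeating the elementary inequalities that the paper proves later in the compactness argument for $B_p$; the paper's version is shorter because it defers these details. Both are acceptable, but yours is the more complete rendition.
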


\begin{proof} By the H\"older Inequality with exponents~${p_{s_\sharp}^* }/({p_{s_\sharp}^* -1})$ and~${p_{s_\sharp}^*}$,
we have that
\begin{eqnarray*}
\big|( f(u),v)\big|\le\int_{\R^N} |u(x)|^{p_{s_\sharp}^* -1}\, |v(x)|\,dx\le
\|u\|^{p_{s_\sharp}^*-1}_{L^{p_{s_\sharp}^*}(\R^N)}\,\|v\|_{L^{p_{s_\sharp}^*}(\R^N)}.\end{eqnarray*}
This and the fractional Sobolev embedding (see e.g.~\cite{MR2944369}) gives that
$$ \big|( f(u),v)\big|\le C_\star\,[u]^{p-1}_{s_\sharp,p} \,[v]_{s_\sharp,p},$$
for some~$C_\star=C_\star(N,p,s_\sharp)>0$.

Hence, by Theorem~\ref{SOBOLEVEMBEDDING}, for all~$s\in[s_\sharp,1]$,
$$ \big|( f(u),v)\big|\le C_\sharp\,[u]^{p-1}_{ s,p} \,[v]_{s,p},$$
for some~$C_\sharp=C_\sharp(N,\Omega, p,s_\sharp)>0$.

This and the H\"older inequality with exponents~$\frac{p}{p-1}$ and~$p$ give that
\begin{eqnarray*}&& \mu^+\big([s_\sharp,1]\big)
\big|( f(u),v)\big|\le C_\sharp\,\int_{[s_\sharp,1]}[u]^{p-1}_{ s,p} \,[v]_{s,p}\,d\mu^+(s)\\&&\qquad\le C_\sharp\,\left(\;
\int_{[s_\sharp,1]}[u]^{p}_{ s,p} \,d\mu^+(s)\right)^{(p-1)/p}\left(\;\int_{[s_\sharp,1]} [v]_{s,p}^p\,d\mu^+(s)
\right)^{1/p}\le C_\sharp\,[\rho_p(u)]^{p-1}\,\rho_p(v),\end{eqnarray*}
which gives the desired result.
\end{proof}

\begin{corollary}\label{2.888-0C}
$f$ is a potential operator.
\end{corollary}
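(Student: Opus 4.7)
The plan is to construct the potential of $f$ explicitly and verify the required regularity. The natural candidate is
$$F(u) := \frac{1}{p^*_{s_\sharp}} \int_{\R^N} |u(x)|^{p^*_{s_\sharp}}\, dx,$$
and I would show that $F \in C^1(W,\R)$ with $F' = f$.

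First, I would check that $F$ is well-defined and continuous on $W = \mathcal{X}_p(\Omega)$. For $u \in W$, the fractional Sobolev embedding (as used in the proof of Lemma~\ref{fcot}) gives $\|u\|_{L^{p^*_{s_\sharp}}(\R^N)} \le C_\star\,[u]_{s_\sharp, p}$; Theorem~\ref{SOBOLEVEMBEDDING} then controls $[u]_{s_\sharp,p}$ by $[u]_{s,p}$ for every $s \in [s_\sharp,1]$; and integrating against $\mu^+$ on $[s_\sharp,1]$, which has positive mass by~\eqref{scritico}, bounds $\mu^+([s_\sharp,1])\,[u]_{s_\sharp,p}^p$ above by a constant multiple of $\int_{[s_\sharp,1]} [u]_{s,p}^p\, d\mu^+(s) \le [\rho_p(u)]^p$. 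Hence $F(u)$ is finite, with continuous dependence on $u \in W$.

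Next, I would compute the G\^ateaux derivative of $F$. For fixed $u, v \in W$ and $t\to 0$, I would consider
$$\frac{F(u+tv) - F(u)}{t} = \frac{1}{t\, p^*_{s_\sharp}} \int_{\R^N} \bigl(|u+tv|^{p^*_{s_\sharp}} - |u|^{p^*_{s_\sharp}}\bigr)\, dx,$$
observe that the integrand converges pointwise to $|u|^{p^*_{s_\sharp}-2}\, u\, v$ almost everywhere, and use the mean value bound
$$\left|\frac{|u+tv|^{p^*_{s_\sharp}} - |u|^{p^*_{s_\sharp}}}{t\, p^*_{s_\sharp}}\right| \le \bigl(|u|+|v|\bigr)^{p^*_{s_\sharp}-1}\, |v|$$
valid for $|t|\le 1$. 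H\"older's inequality with exponents $p^*_{s_\sharp}/(p^*_{s_\sharp}-1)$ and $p^*_{s_\sharp}$, combined with the Sobolev embedding just used, makes this bound integrable on $\R^N$. Dominated convergence then gives $(F'(u), v) = (f(u), v)$ for every $v \in W$.

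Finally, I would invoke Lemma~\ref{fcot}, which provides continuity of $f: W \to W^*$. The standard criterion that a G\^ateaux-differentiable functional with continuous G\^ateaux derivative is in fact Fr\'echet differentiable of class $C^1$ then closes the argument: $F \in C^1(W,\R)$ with $F' = f$, so $f$ is a potential operator in the sense of Section~\ref{AB2}. The main delicate point I anticipate is producing the dominating function uniformly in $t$; but since $u, v \in W$ implies $u+v \in W$, the H\"older/embedding chain of Lemma~\ref{fcot} applies to integrands built from $u$ and $v$, so this reduces to a routine estimate.
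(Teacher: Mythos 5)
Your proposal is correct and takes the same approach as the paper: both exhibit the same candidate potential $F(u) = \tfrac{1}{p^*_{s_\sharp}}\int_{\R^N}|u|^{p^*_{s_\sharp}}\,dx$ and lean on Lemma~\ref{fcot} for the continuity of $f$. The paper simply asserts that this $F$ is a potential of $f$ without further comment, whereas you carry out the verification in full (dominated convergence for the G\^ateaux derivative, then the continuous-G\^ateaux-derivative-implies-$C^1$ criterion), which the paper evidently regards as routine.
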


\begin{proof} We know by Lemma~\ref{fcot} that~$f\in C(W,W^*)$. A potential of~$f$ is given by \begin{equation}\label{POTEF}F(u):=
\frac1{p_{s_\sharp}^*}
\int_{\R^N} |u(x)|^{p_{s_\sharp}^* }\,dx.\qedhere\end{equation}
\end{proof}

In light of Corollary~\eqref{2.888-0C}, we have checked condition~\eqref{2.888-0}, as desired.\medskip

The fact that~$f$ is odd, as requested in~\eqref{2.888-01} is obvious in our case.
\medskip

As for the assumption on~$L_p$ in~\eqref{lppotop},  we have to interpret the definition of~$L_p$ 
in~\eqref{SETTING} as an operator from~$W$ to its dual,
namely, for all~$u$, $v\in\mathcal{X}_p(\Omega)$,
$$ (L_p(u),v)=\int_{\R^N}\left(\;\int_{[0, 1]} (-\Delta)_p^{s} u(x)\,v(x) \, d\mu^-(s)\right)\,dx.$$
In this case, we have that

\begin{lemma}\label{Lpcont} $L_p$ is continuous from~$W$ to~$W^*$.
\end{lemma}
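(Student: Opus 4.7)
The plan is to mimic the argument of Lemma~\ref{APC}, exploiting two additional ingredients: hypothesis~\eqref{mu3}, which reduces the integration against~$\mu^-$ to the interval~$[0,\overline s]$, and Proposition~\ref{absorb}, which allows me to reabsorb that~$\mu^-$-integral into a multiple of~$[\rho_p(u)]^p$.

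First, using~\eqref{mu3} to restrict the support of~$\mu^-$ and rewriting~$(L_pu,v)$ exactly as in~\eqref{hdsu3y259676tgrhueh}, I would express
\[
(L_pu,v)=\iiint_{[0,\overline s]\times\R^{2N}}\frac{c_{N,s,p}\,|u(x)-u(y)|^{p-2}(u(x)-u(y))(v(x)-v(y))}{|x-y|^{N+sp}}\,dx\,dy\,d\mu^-(s).
\]
Then, applying H\"older's inequality with conjugate exponents~$p/(p-1)$ and~$p$ to the pair of functions displayed in~\eqref{2Aalsdx029oiefkvHO-bn}, now with respect to the product measure~$d\mu^-(s)\,dx\,dy$, I would obtain, in perfect analogy with~\eqref{2Aalsdx029oiefkvHO},
\[
|(L_pu,v)|\le\left(\,\int_{[0,\overline s]}[u]_{s,p}^p\,d\mu^-(s)\right)^{(p-1)/p}\left(\,\int_{[0,\overline s]}[v]_{s,p}^p\,d\mu^-(s)\right)^{1/p}.
\]

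Second, I would invoke Proposition~\ref{absorb} on both factors. Since~\eqref{mu3} says that~$\mu^-$ vanishes on~$[\overline s,1]$, one has
\[
\int_{[\overline s,1]}[u]_{s,p}^p\,d\mu(s)=\int_{[\overline s,1]}[u]_{s,p}^p\,d\mu^+(s)\le[\rho_p(u)]^p,
\]
so Proposition~\ref{absorb} yields~$\int_{[0,\overline s]}[u]_{s,p}^p\,d\mu^-(s)\le c_0\gamma\,[\rho_p(u)]^p$, and analogously for~$v$. Multiplying the two estimates, I obtain the quantitative bound
\[
|(L_pu,v)|\le c_0\gamma\,[\rho_p(u)]^{p-1}\rho_p(v),
\]
which is the desired boundedness of~$L_p$ as a map from~$W$ into~$W^*$.

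Continuity, rather than mere boundedness, then follows from the same Nemytsky-type reasoning implicit in Lemmata~\ref{APC} and~\ref{BPC}: combining the above uniform estimate with the pointwise continuity of the integrand in the~$u$ variable, I would apply the dominated convergence theorem along a subsequence of any sequence~$u_n\to u$ in~$\mathcal X_p(\Omega)$. The only genuine technical point is the correct bookkeeping of the factor~$\gamma$ through Proposition~\ref{absorb}, which confirms that the ``wrong sign'' portion~$\mu^-$ is indeed dominated by the~$\mu^+$-mass concentrated on~$[\overline s,1]$; once this is in place, the remainder of the argument is a verbatim adaptation of the proof of Lemma~\ref{APC}.
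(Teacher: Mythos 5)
Your proof is correct and follows essentially the same route as the paper's: rewrite $(L_pu,v)$ as in~\eqref{hdsu3y259676tgrhueh}, apply H\"older with respect to~$d\mu^-(s)\,dx\,dy$ as in~\eqref{2Aalsdx029oiefkvHO}, and then control the resulting~$\mu^-$-seminorms. The paper's own proof is more terse and states the final bound as~$|(L_pu,v)|\le[\rho_p(u)]^{p-1}\rho_p(v)$ without spelling out the reabsorption step, which literally requires Proposition~\ref{absorb} to pass from~$\int_{[0,\overline s]}[u]_{s,p}^p\,d\mu^-(s)$ to~$[\rho_p(u)]^p$ (and hence actually produces a constant~$c_0\gamma$, as you correctly compute); your write-up makes this explicit, which is the right way to close that small gap.
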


\begin{proof} We use~\eqref{hdsu3y259676tgrhueh} and argue as in~\eqref{2Aalsdx029oiefkvHO}
with~$A_p$ replaced by~$L_p$ and~$\mu^+$ replaced by~$\mu^-$ to conclude that
$$|(L_pu,v)|\le[\rho_p(u)]^{p-1}\,\rho_p(v),$$
which yields the desired result.
\end{proof}

\begin{corollary}\label{Lppotope000}
$L_p$ is a potential operator.
\end{corollary}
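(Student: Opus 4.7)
The plan is to mirror exactly the argument already used to conclude that $A_p$ and $B_p$ are potential operators, combining the continuity of $L_p$ from Lemma~\ref{Lpcont} with the abstract result invoked in~\eqref{ARETRH} (namely~\cite[Proposition~3.1]{MR4293883}), which turns continuity together with $(p-1)$-homogeneity into the existence of a $C^1$ potential vanishing at the origin. The candidate potential is the one already named in Section~\ref{AB2}, namely
\[
N_p(u)\,=\,\frac{1}{p}(L_p u,u)\,=\,\frac{1}{p}\int_{[0,1]}[u]_{s,p}^p\,d\mu^-(s),
\]
where the second equality is obtained by choosing $v=u$ in~\eqref{hdsu3y259676tgrhueh} and integrating against~$d\mu^-(s)$.

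To carry this out, I would first verify that $L_p$ is $(p-1)$-homogeneous and odd. Both properties are inherited termwise from each $(-\Delta)_p^s$ and survive the integration against~$\mu^-$ by linearity. Next, I would check that $N_p$ is finite on all of $W=\mathcal{X}_p(\Omega)$: by~\eqref{mu3} the measure $\mu^-$ is supported in $[0,\overline s]$, so Proposition~\ref{absorb} bounds $\int_{[0,1]}[u]_{s,p}^p\,d\mu^-(s)$ by a constant multiple of $[\rho_p(u)]^p$, which is finite by the very definition of $\mathcal{X}_p(\Omega)$.

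With continuity, $(p-1)$-homogeneity and this finiteness in hand, \cite[Proposition~3.1]{MR4293883} directly yields $N_p\in C^1(W,\R)$ with $N_p'=L_p$ and $N_p(0)=0$, completing the proof. The only step that might require more than a line, should one prefer a self-contained argument in place of the citation, is the Fréchet differentiability of~$N_p$: this would amount to interchanging differentiation with both the double spatial integral and the integral against~$d\mu^-(s)$, which I would justify by dominated convergence, using the uniform estimate in Theorem~\ref{SOBOLEVEMBEDDING} to manufacture a dominating function valid uniformly in $s\in[0,1]$. I expect this to be the only mildly technical point, all other ingredients being purely formal consequences of what has already been proved.
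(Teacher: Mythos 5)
Your proof is correct and follows essentially the same route as the paper: the paper's own proof simply invokes Lemma~\ref{Lpcont} for continuity and declares $N_p(u)=\frac1p(L_pu,u)$ to be a potential of $L_p$, implicitly relying on the same \cite[Proposition~3.1]{MR4293883} that underlies~\eqref{ARETRH}. You merely spell out the intermediate verifications (homogeneity, oddness, finiteness of $N_p$ via Proposition~\ref{absorb}) that the paper leaves tacit, so the substance is identical.
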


\begin{proof}
Lemma~\ref{Lpcont} informs us that~$L_p\in C(W,W^*)$. Moreover, a potential of~$L_p$ is given by
\begin{equation}\label{Npippoepluto} N_p(u):=\frac1p(L_pu,u)=\frac1p \int_{\R^N}\left(\;\int_{[0, 1]} (-\Delta)_p^{s} u(x)\,u(x) \, d\mu^-(s)\right)\,dx.\qedhere
\end{equation}
\end{proof}

Lemmata~\ref{Lpcont} and~\ref{Lppotope000} establish that the assumption in~\eqref{lppotop} is satisfied.
\medskip

The homogeneity required in~$(A_1)$ is obvious.
In relation to~$(A_2)$, for all~$u, v \in {\mathcal{X}}_p(\Omega)$, the bound on~$\dualp{\Ap[u]}{v}$
is a consequence of~\eqref{Aalsdx029oiefkv}.
In addition, equality holds true in~\eqref{Aalsdx029oiefkv}
if and only if it holds in~\eqref{2Aalsdx029oiefkvHO},
and so if and only if the~$\frac{p}{p-1}$ power of the absolute value of the first function in~\eqref{2Aalsdx029oiefkvHO-bn}
is proportional to the~$p$ power of the absolute value of the second
function in~\eqref{2Aalsdx029oiefkvHO-bn} (and with a consistent sign if
we want~$\dualp{\Ap[u]}{v}\ge0$), i.e. if and only if there exist~$\alpha$, $\beta\ge0$, with~$\alpha^2+\beta^2\ne0$, such that
\begin{equation*}
\alpha\, (u(x) - u(y))= \beta\, (v(x)-v(y)).\end{equation*}
Choosing~$y\in\R^n\setminus\Omega$ (in which case~$u(y)=v(y)=0$), we see that this
condition is equivalent to~$\alpha u = \beta v$.

These observations give that~$(A_2)$ holds true.\medskip

The homogeneity and the odd property in~$(B_1)$ is obvious, and~$(B_2)$ follows from
\begin{equation}\label{NASDL}
(B_pu,u)=\int_{\R^N} |u(x)|^{p}\,dx=\|u\|_{L^p(\R^N)}^p>0,
\end{equation}
unless~$u$ vanishes identically, and
\begin{eqnarray*}
(B_pu,v)\le\int_{\R^N} |u(x)|^{p-1}\,|v(x)|\,dx\le \|u\|_{L^p(\R^N)}^{p-1}\,\|v\|_{L^p(\R^N)}=(B_pu,u)^{(p-1)/p}\,(B_pv,v)^{1/p}.
\end{eqnarray*}

\medskip

With respect to the compactness property requested in~$(B_3)$, it is a consequence of the following result:

\begin{lemma}
Let~$u_n$ be a bounded sequence in~${\mathcal{X}}_p(\Omega)$.
Then, the sequence~$U_n:=B_pu_n$ is precompact
in the dual of~${\mathcal{X}}_p(\Omega)$.
\end{lemma}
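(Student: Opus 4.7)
The plan is to combine reflexivity of $\mathcal{X}_p(\Omega)$ with a compact embedding into $L^p(\Omega)$ and the classical continuity of the Nemytskii map $t\mapsto |t|^{p-2}t$ from $L^p$ into $L^{p/(p-1)}$. First, since $\mathcal{X}_p(\Omega)$ is uniformly convex by Lemma~\ref{UNCONCVE}, it is reflexive, so up to a subsequence $u_n\rightharpoonup u$ weakly in $\mathcal{X}_p(\Omega)$ for some limit $u$. Next I would upgrade this to strong convergence in $L^p(\Omega)$. Applying Theorem~\ref{SOBOLEVEMBEDDING} with $s:=\overline s$ and $S:=s$ for each $s\in[\overline s,1]$ produces a constant $C=C(N,\Omega,p)>0$ such that $[u_n]_{\overline s,p}\le C\,[u_n]_{s,p}$ for all such $s$. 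Integrating this against $\mu^+$ on $[\overline s,1]$ and using~\eqref{mu00} yields
\[
[u_n]_{\overline s,p}^p \;\le\; \frac{C^p}{\mu^+([\overline s,1])}\int_{[\overline s,1]}[u_n]_{s,p}^p\,d\mu^+(s) \;\le\; \frac{C^p}{\mu^+([\overline s,1])}\,[\rho_p(u_n)]^p,
\]
so $u_n$ is bounded in $W^{\overline s,p}(\R^N)$ (or in $W^{1,p}(\R^N)$ if $\overline s=1$) with support in the fixed bounded set $\overline\Omega$. Since $\overline s>0$ by hypothesis, the Rellich--Kondrachov theorem (classical when $\overline s=1$, fractional when $\overline s\in(0,1)$, see e.g.~\cite{MR2944369}) supplies a further subsequence with $u_n\to u$ strongly in $L^p(\Omega)$.

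Then I would extract one more subsequence, still denoted $u_n$, so that $u_n\to u$ pointwise a.e.\ and $|u_n|\le g$ for some $g\in L^p(\Omega)$. Consequently $|u_n|^{p-2}u_n\to |u|^{p-2}u$ almost everywhere and is dominated by $g^{p-1}\in L^{p/(p-1)}(\Omega)$, so by dominated convergence $|u_n|^{p-2}u_n\to|u|^{p-2}u$ in $L^{p/(p-1)}(\Omega)$; a standard Urysohn-subsequence argument promotes this to convergence along the whole previously extracted sequence. For any $v\in\mathcal{X}_p(\Omega)$, H\"older's inequality with exponents $p/(p-1)$ and $p$ combined with Proposition~\ref{PRBMI} gives
\[
|(B_pu_n-B_pu,v)| \;\le\; \||u_n|^{p-2}u_n-|u|^{p-2}u\|_{L^{p/(p-1)}(\Omega)}\,\|v\|_{L^p(\R^N)} \;\le\; C'\,\||u_n|^{p-2}u_n-|u|^{p-2}u\|_{L^{p/(p-1)}(\Omega)}\,\rho_p(v),
\]
where the constant $C'=C'(N,\Omega,p,\mu^+)$ comes from combining Proposition~\ref{PRBMI} with H\"older against $\mu^+$ (exactly as in the proof of Lemma~\ref{BPC}). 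Taking the supremum over $v$ with $\rho_p(v)\le 1$ and letting $n\to\infty$ shows that $B_pu_n\to B_pu$ in $\mathcal{X}_p(\Omega)^*$, which establishes precompactness.

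The only genuinely subtle ingredient is the compact embedding $\mathcal{X}_p(\Omega)\hookrightarrow L^p(\Omega)$: it has to be extracted from the definition of $\rho_p$, which a priori averages seminorms of rather different nature, from $L^p$ when $s=0$ up to $W^{1,p}$ when $s=1$. The crucial observation is that condition~\eqref{mu00} concentrates a uniformly positive mass of $\mu^+$ above the threshold $\overline s>0$, and Theorem~\ref{SOBOLEVEMBEDDING} lets me trade any $[u_n]_{s,p}$ for $s\ge\overline s$ against $[u_n]_{\overline s,p}$. Once a uniform bound on $[u_n]_{\overline s,p}$ is in hand, the remaining analysis reduces to the standard Rellich--Kondrachov plus Nemytskii argument.
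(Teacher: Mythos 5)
Your argument is correct and reaches the same conclusion, but the second half proceeds by a genuinely different route. Up through the uniform bound on $[u_n]_{\overline s,p}$ and the compact embedding into $L^p(\Omega)$, you mirror the paper (although your preliminary invocation of reflexivity and weak convergence in $\mathcal X_p(\Omega)$ is not actually used afterwards and can be dropped). The divergence is in how one passes from $u_n\to u$ in $L^p(\Omega)$ to $|u_n|^{p-2}u_n\to|u|^{p-2}u$ in $L^{p/(p-1)}(\Omega)$. The paper does this by hand, splitting into the cases $p=2$, $p>2$, $p\in(1,2)$ and invoking the pointwise elementary inequalities
\[
\bigl|\,|a|^{p-2}a-|b|^{p-2}b\,\bigr|\le (p-1)\bigl(|a|^{p-2}+|b|^{p-2}\bigr)|a-b| \quad(p\ge2), \qquad \bigl|\,|a|^{p-2}a-|b|^{p-2}b\,\bigr|\le C_\star|a-b|^{p-1}\quad(p<2),
\]
followed by appropriate H\"older manipulations. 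You instead extract a further subsequence converging a.e.\ and dominated in $L^p$, and then invoke the dominated convergence theorem (i.e., the standard Nemytskii/Krasnoselskii continuity of the map $L^p\to L^{p/(p-1)}$, $w\mapsto|w|^{p-2}w$), with a subsequence/Urysohn upgrade. Your route is shorter, avoids the case split, and is arguably more conceptual; the paper's route is self-contained and more quantitative, yielding an explicit modulus of continuity $\|B_pu_n-B_pu\|^* \lesssim \|u_n-u\|_{L^p}^{\min(1,p-1)}$ rather than a soft convergence statement. Both are valid; one minor remark is that for precompactness you do not actually need the Urysohn step, since it suffices to produce a convergent subsubsequence from any given subsequence, which your first chain of extractions already does.
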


\begin{proof} 
We exploit Theorem~\ref{SOBOLEVEMBEDDING} to see that
\begin{eqnarray*}
\int_{[\overline s,1]}[u_n]_{s,p}^p \,d\mu^+(s)
\ge \frac1{C(N,\Omega,p)}\int_{[\overline s,1]}[u_n]_{\overline s,p}^p \,d\mu^+(s)=\frac{\mu^+([\overline s,1])}{C(N,\Omega,p)}[u_n]_{\overline s,p}^p .
\end{eqnarray*}
In light of the assumption on~$\mu$ in~\eqref{mu00}, this implies that~$[u_n]_{\overline s,p}$ is uniformly bounded in~$n$, and therefore 
by the fractional compact embedding (see e.g. 
\cite[Corollary~7.2]{MR2944369}) we obtain that, up to a subsequence,
\begin{equation}\label{fhwerutgvbhsd879999999999962-945}
{\mbox{$u_n$ converges to some~$u\in L^p(\Omega)$.}}\end{equation}

Thus, we set~$U:=B_pu$ and we 
have that, for all~$v\in{\mathcal{X}}_p(\Omega)$,
$$ ( U_n,v)-(U,v)=\int_{\R^N} \Big(|u_n(x)|^{p-2}u_n(x)-|u(x)|^{p-2}u(x)\Big)\,v(x)\,dx.$$
By the H\"older Inequality with exponents~$\frac{p}{p-1}$ and~$p$
and Theorem~\ref{SOBOLEVEMBEDDING}, we obtain that
\begin{equation}\label{sweo0p7i8u6t453rkjtrg}\begin{split}
|( U_n,v)-(U,v)|\le\;&
\left(\;\int_{\R^N} \Big|\,|u_n(x)|^{p-2}u_n(x)-|u(x)|^{p-2}u(x)\Big|^{\frac{p}{p-1}}\,dx\right)^{\frac{p-1}{p}} \|v\|_{L^p(\R^N)}\\
\le\;& C \left(\;\int_{\R^N} \Big|\,|u_n(x)|^{p-2}u_n(x)-|u(x)|^{p-2}u(x)\Big|^{\frac{p}{p-1}}\,dx\right)^{\frac{p-1}{p}} \rho_p(v),
\end{split}\end{equation}
where~$C=C(N, \Omega, p,\mu^+)>0$.

We point out that if~$p=2$ the desired result follows from this and~\eqref{fhwerutgvbhsd879999999999962-945}.
If instead~$p\neq2$ some more algebraic manipulations are needed to establish the desired convergence result. Hence, from now on, we suppose that~$p\neq2$.

When~$p>2$, we utilize the following inequality
\begin{equation}\label{dkoejtiohnytkvfxdls}
\big|\,|a|^{p-2}a-|b|^{p-2}b\,\big|\le  (p-1) \big( |a|^{p-2} +|b|^{p-2}\big) |a-b|,
\end{equation}
which holds true for all~$a$, $b\in\R$. 
Indeed, we suppose that~$a<b$ and
we have that
\begin{eqnarray*}&&
\big|\,|a|^{p-2}a-|b|^{p-2}b\,\big|=\left|\int_a^b (p-1) |t|^{p-2}\,dt
\right|\le (p-1) \big( |a|^{p-2} +|b|^{p-2}\big) |a-b|,
\end{eqnarray*}
which is~\eqref{dkoejtiohnytkvfxdls}.

Hence, using formula~\eqref{dkoejtiohnytkvfxdls}
with~$a:=u_n(x)$ and~$b:=u(x)$, we conclude that
\begin{equation}\label{djweioguoe9786543e}\begin{split}
&\int_{\R^N} \Big|\,|u_n(x)|^{p-2}u_n(x)-|u(x)|^{p-2}u(x)\Big|^{\frac{p}{p-1}}\,dx
\\ \le\;& C\int_{\R^N} 
\big( |u_n(x)|^{p-2} +|u(x)|^{p-2}\big)^{\frac{p}{p-1}}
| u_n(x) -u(x)|^{\frac{p}{p-1}}\,dx
\end{split}\end{equation}
for some~$C=C(p)>0$.

In this case, we have that~$p-1>1$ and therefore we can use
the H\"older Inequality with exponents~$\frac{p-1}{p-2}$ and~$p-1$
and obtain from~\eqref{djweioguoe9786543e} that
\begin{equation*}\begin{split}
&\int_{\R^N} \Big|\,|u_n(x)|^{p-2}u_n(x)-|u(x)|^{p-2}u(x)\Big|^{\frac{p}{p-1}}\,dx
\\ \le\;& C\left(\;\int_{\R^N} 
\big( |u_n(x)|^{p-2} +|u(x)|^{p-2}\big)^{\frac{p}{p-2}}\,dx\right)^{\frac{p-2}{p-1}}
\left(\;\int_{\R^N}
| u_n(x) -u(x)|^{p}\,dx\right)^{\frac{1}{p-1}}
\\ \le\;& C\left(\;\int_{\R^N} 
\big( |u_n(x)|^{p} +|u(x)|^{p}\big)\,dx\right)^{\frac{p-2}{p-1}}
\left(\;\int_{\R^N}
| u_n(x) -u(x)|^{p}\,dx\right)^{\frac{1}{p-1}},
\end{split}\end{equation*}
up to renaming~$C$.

The desired result in the case~$p>2$ then follows from this, \eqref{fhwerutgvbhsd879999999999962-945}
and~\eqref{sweo0p7i8u6t453rkjtrg}.

If instead~$p\in(1,2)$, we first observe that, for all~$t\in\R\setminus\{0\}$,
\begin{equation}\label{pso2}
\frac{\big| |1+t|^{p-2}(1+t)-1\big|}{|t|^{p-1}}\le C_\star,
\end{equation}
for some~$C_\star>0$, depending only on~$p$.

To check this, one can define~$\phi(t)$ as the left hand side of~\eqref{pso2} and notice that
$$ \lim_{t\to\pm\infty}\phi(t)=1.$$
Moreover, by L'{H}\^{o}pital's Rule, since~$p<2$,
$$ \lim_{t\to0}\phi(t)=\lim_{t\to0} \frac{\big| (1+t)^{p-1}-1\big|}{|t|^{p-1}}=\lim_{t\to0} 
\big( {\rm sign}(t)\big)^p\;
\frac{ (1+t)^{p-1}-1}{t^{p-1}}
=\pm\lim_{t\to0} \frac{ (1+t)^{p-2}}{t^{p-2}}=0.
$$
These observations establish~\eqref{pso2}.

Moreover, in this case, for all~$a$, $b\in\R$,
\begin{equation}\label{pso23}
\big|\,|a|^{p-2}a-|b|^{p-2}b\,\big|\le  C_\star \,|a-b|^{p-1}.
\end{equation}
Indeed, if~$a=b$, or~$a=0$, or~$b=0$ this is obvious, hence we can suppose that~$a\ne b$, that~$a\ne0$ and that~$b\ne0$.

Also, we can assume that~$a$ and~$b$ have the same sign, because if, say, $a>0>b$, then
$$ \big|\,|a|^{p-2}a-|b|^{p-2}b\,\big|=
a^{p-1}+|b|^{p-1}\le (a+|b|)^{p-1}+(a+|b|)^{p-1}
=2(a-b)^{p-1}=2|a-b|^{p-1}$$
and we are done.

Hence, up to swapping the signs of~$a$ and~$b$, which would not change the desired claim, we can assume, without loss of generality, that~$a$, $b>0$.

Accordingly, we can define
$$ t:=\frac{a}{b}-1.$$
Notice that~$t\ne0$ and then, in view of~\eqref{pso2},
\begin{eqnarray*}
C_\star\ge\frac{\big| |1+t|^{p-2}(1+t)-1\big|}{|t|^{p-1}}
= \frac{\big| (a/b)^{p-1}-1\big|}{|(a/b)-1|^{p-1}}= \frac{\big| a^{p-1}-b^{p-1}\big|}{|a-b|^{p-1}},
\end{eqnarray*}
which establishes~\eqref{pso23}.

As a consequence, owing to~\eqref{sweo0p7i8u6t453rkjtrg} and~\eqref{pso23},
\begin{eqnarray*}
|( U_n,v)-(U,v)|&\le&
C\,C_\star\,\left(\;\int_{\R^N} |u_n(x)-u(x)|^p\,dx\right)^{\frac{p-1}{p}} \rho_p(v).
\end{eqnarray*}
This and the convergence of~$u_n$ in~$L^p(\Omega)$ prove the desired result.
\end{proof}

\medskip

The homogeneity assumption in~$(L_1)$ is also obvious.

\medskip

With respect to the validity of~$(\mathcal F_1)$, we note that the potential~$F$ is given by~\eqref{POTEF} and satisfies
\begin{equation}\label{FDCP}
|F(u)|=\frac1{p_{s_\sharp}^*}\|u\|^{p_{s_\sharp}^* }_{L^{p_{s_\sharp}^* }(\R^N)}.
\end{equation}

Besides, from the fractional Sobolev embedding (see e.g. 
\cite[Theorem~6.5]{MR2944369}) and Theorem~\ref{SOBOLEVEMBEDDING}, for all~$s\in[s_\sharp,1]$,
$$ \|u\|_{L^{p_{s_\sharp}^* }(\R^N)}\le C_1\,[u]_{s_\sharp,p}\le C_2\,[u]_{s,p},$$
for suitable~$C_1=C_1(N,s_\sharp,\Omega,p)>0$ and~$C_2=C_2(N,s_\sharp,\Omega,p)>0$.

Integrating over~$s$ we find that
$$ \mu^+\big([s_\sharp,1]\big)\|u\|_{L^{p_{s_\sharp}^* }(\R^N)}^p\le C_2^p\int_{[s_\sharp,1]}[u]_{s,p}^p\,d\mu^+(s)\le
\big[ C_2\,\rho_p(u)\big]^p.$$

This and~\eqref{FDCP} give that
$$ |F(u)|\le C_3\,[\rho(u)]^{p_{s_\sharp}^*/p},$$
with~$C_3=C_3(N,s_\sharp,\Omega,p,\mu)>0$ and , since~$p_{s_\sharp}^*>p$,
we see that~$(\mathcal F_1)$ is satisfied in our setting.\medskip

In regard to~$(\mathcal F_2)$, one uses~\eqref{SETT1}, \eqref{POTEF} and~\eqref{NASDL} and sees that
\begin{eqnarray*}
F(u)-\frac\beta{q}\big(pJ_p(u)\big)^{q/p}&=&\frac1{p_{s_\sharp}^*}\;
\int_{\R^N} |u(x)|^{p_{s_\sharp}^* }\,dx-\frac{\beta}q\big( \dualp{\Bp[u]}{u}\big)^{q/p}\\
&=&\frac1{p_{s_\sharp}^*}\;\int_{\Omega} |u(x)|^{p_{s_\sharp}^* }\,dx- \frac{\beta}q\left(\int_{\Omega} |u(x)|^{p}\,dx\right)^{q/p}.\end{eqnarray*}
Hence, choosing
\begin{equation}\label{plutoepaper}
q:=p_{s_\sharp}^* >p \qquad {\mbox{and}}\qquad \beta:=\frac1{|\Omega|^{(p_{s_\sharp}^* -p)/p }},\end{equation} the H\"older Inequality with exponents~$p_{s_\sharp}^* /p$
and~$p_{s_\sharp}^* /(p_{s_\sharp}^* -p)$ gives that
$$ F(u)-\frac\beta{q}\big(pJ_p(u)\big)^{q/p}\ge\frac1{p_{s_\sharp}^*}\;
\int_{\Omega} |u(x)|^{p_{s_\sharp}^* }\,dx- \frac{\beta\,|\Omega|^{(p_{s_\sharp}^* -p)/p 
}}{p_{s_\sharp}^* } \int_{\Omega} |u(x)|^{p_{s_\sharp}^*}\,dx=0
,$$
establishing~$(\mathcal F_2)$.
\medskip

Referring to~$(\mathcal F_3)$, we first point out the following weak convergence result:

\begin{lemma}\label{Vitali}
Let~$u_n$ be a bounded sequence in~$\mathcal X_p(\Omega)$. 

Then, there exists~$u\in\mathcal X_p(\Omega)$ such that,
for any~$v\in\mathcal X_p(\Omega)$, we have
\begin{equation}\label{Vconv}
\begin{split}
&\lim_{n\to+\infty}\int_{[{{ 0 }}, 1]} \left(\;\iint_{\R^{2N}} \frac{|u_n(x)-u_n(y)|^{p-2}(u_n(x)-u_n(y)) (v(x)-v(y))}{|x-y|^{N+sp}} \, dxdy\right)\, d\mu^\pm(s)\\
&\qquad=\int_{[{{ 0 }}, 1]} \left(\;\iint_{\R^{2N}} \frac{|u(x) - u(y)|^{p-2}(u(x)-u(y)) (v(x)-v(y))}{|x-y|^{N+sp}} \, dxdy\right)\, d\mu^\pm(s) .
\end{split}
\end{equation}
\end{lemma}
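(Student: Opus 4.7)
The plan is to view the problem as a weak convergence statement for $|u_n(x)-u_n(y)|^{p-2}(u_n(x)-u_n(y))$ in a suitable Lebesgue space on the enlarged product domain $\R^{2N}\times[0,1]$. First, I would extract a candidate limit: since $\mathcal{X}_p(\Omega)$ is complete (by~\eqref{complete00}) and uniformly convex (by Lemma~\ref{UNCONCVE}), it is reflexive, and so, up to a subsequence, $u_n\rightharpoonup u$ in $\mathcal{X}_p(\Omega)$ for some $u\in\mathcal{X}_p(\Omega)$. To promote this to pointwise a.e.\ convergence on $\R^N$, I would combine Theorem~\ref{SOBOLEVEMBEDDING} (which gives $[u_n]_{\overline s,p}\le C\,[u_n]_{s,p}$ for every $s\in[\overline s,1]$) with~\eqref{mu00} to obtain
\[
\mu^+([\overline s,1])\,[u_n]_{\overline s,p}^p\le C^p\int_{[\overline s,1]}[u_n]_{s,p}^p\,d\mu^+(s)\le C^p\,\rho_p(u_n)^p,
\]
so that $[u_n]_{\overline s,p}$ is uniformly bounded. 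The fractional Rellich--Kondrachov compactness (see e.g.~\cite{MR2944369}) then yields, up to a further subsequence, $u_n\to u$ strongly in $L^p(\Omega)$ and pointwise a.e.\ in $\R^N$.

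Next, I would recast the convergence on an enlarged domain. Introduce the measures $d\nu^\pm(x,y,s):=c_{N,s,p}\,|x-y|^{-(N+sp)}\,dx\,dy\,d\mu^\pm(s)$ on $\R^{2N}\times[0,1]$, interpreting the endpoints $s=0$ and $s=1$ via the convention in the footnote of Definition~\ref{wsol}, and the difference map $Du(x,y,s):=u(x)-u(y)$. Then $\|Du\|_{L^p(d\nu^+)}=\rho_p(u)$, and by Proposition~\ref{absorb} any $u\in\mathcal{X}_p(\Omega)$ also satisfies $Du\in L^p(d\nu^-)$. Hence $Du_n$ is uniformly bounded in both $L^p(d\nu^+)$ and $L^p(d\nu^-)$, and therefore $|Du_n|^{p-2}Du_n$ is uniformly bounded in $L^{p/(p-1)}(d\nu^\pm)$. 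Since $u_n\to u$ pointwise a.e.\ on $\R^N$, it follows that $Du_n\to Du$ pointwise $d\nu^\pm$-almost everywhere on $\R^{2N}\times[0,1]$ (as $d\nu^\pm$ is absolutely continuous with respect to $dx\,dy\otimes d\mu^\pm$ off the diagonal, which has $d\nu^\pm$-measure zero). The classical fact that $L^q$-boundedness with $q>1$ combined with a.e.\ convergence implies weak $L^q$ convergence (a standard consequence of reflexivity and uniqueness of a.e.\ limits) then yields
\[
|Du_n|^{p-2}Du_n\rightharpoonup |Du|^{p-2}Du \qquad \text{weakly in } L^{p/(p-1)}(d\nu^\pm).
\]

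To conclude, for any $v\in\mathcal{X}_p(\Omega)$ we have $Dv\in L^p(d\nu^+)$ and, again by Proposition~\ref{absorb}, also $Dv\in L^p(d\nu^-)$; testing the above weak convergence against the fixed function $Dv$ produces precisely~\eqref{Vconv} for both signs. In my view, the main obstacle is the rigorous setup of the product measure $d\nu^\pm$ (in particular handling the possible Dirac contributions of $\mu^\pm$ at $s=0$ or $s=1$) and the careful application of the weak-convergence principle in $L^{p/(p-1)}(d\nu^\pm)$; once those two ingredients are in place, the remaining bookkeeping reduces to routine uses of Proposition~\ref{absorb} and Theorem~\ref{SOBOLEVEMBEDDING}.
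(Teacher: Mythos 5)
Your approach is genuinely different from, and substantially more detailed than, the paper's. The paper merely invokes the Milman--Pettis and Kakutani theorems to get weak compactness of bounded sets in $\mathcal{X}_p(\Omega)$, then asserts this ``yields the desired result.'' But weak convergence $u_n\rightharpoonup u$ in $\mathcal{X}_p(\Omega)$ does not by itself imply~\eqref{Vconv}, since the integrand is nonlinear in $u_n$ (it involves $|u_n(x)-u_n(y)|^{p-2}$); the operator analogous to $A_p$ is not weakly sequentially continuous for $p\ne2$. You correctly identify this difficulty and supply the missing step: use the weak limit together with compact embedding into $L^p(\Omega)$ to obtain a.e.\ convergence, view $|Du_n|^{p-2}Du_n$ as a bounded sequence in $L^{p/(p-1)}(d\nu^\pm)$ on the product space, and invoke the standard fact that a.e.\ convergence plus an $L^q$-bound ($q>1$) gives weak $L^q$-convergence. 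This is precisely the Vitali-type argument the lemma's label hints at, and it is more careful than the paper's two-line proof.

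There is, however, a genuine gap at the endpoint $s=1$, which you flag but do not resolve. If $\mu^\pm(\{1\})>0$ (as in Corollaries~\ref{C1}, \ref{C3}, \ref{C5}), the $s=1$ contribution in~\eqref{Vconv} is, per the footnote to Definition~\ref{wsol}, $\int_\Omega|\nabla u_n|^{p-2}\nabla u_n\cdot\nabla v\,dx$, so the relevant pointwise quantity is $\nabla u_n(x)$ rather than a difference $u_n(x)-u_n(y)$. Compact embedding gives $u_n\to u$ a.e.\ in $\R^N$, but this does not yield $\nabla u_n\to\nabla u$ a.e., and weak convergence $\nabla u_n\rightharpoonup\nabla u$ in $L^p$ alone does not imply $|\nabla u_n|^{p-2}\nabla u_n\rightharpoonup|\nabla u|^{p-2}\nabla u$ in $L^{p/(p-1)}$ when $p\ne2$ (standard oscillatory counterexamples such as $u_n=u+n^{-1}\zeta\sin(nx_1)$ show this). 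So the a.e.-convergence route covers $s\in[0,1)$ but not a Dirac mass at $1$; closing that case typically needs the extra structure of a Palais--Smale sequence (Boccardo--Murat / $(S_+)$-type arguments), which neither your proposal nor, notably, the paper's own proof supplies.
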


\begin{proof} On the one hand, by~\eqref{complete00} and Lemma~\ref{UNCONCVE}, we have that~$\mathcal X_p(\Omega)$ is complete and uniformly convex.
On the other hand,
the Milman-Pettis Theorem (see e.g.~\cite{MR100215})
states that every uniformly convex Banach space is reflexive. Therefore, $\mathcal X_p(\Omega)$ is reflexive (and the same holds if one replaces~$\mu^+$ by~$\mu^-$).

Hence, by Kakutani's Theorem we know that
the closed balls of~${\mathcal{X}}_p(\Omega)$ are compact in the weak topology, which yields the desired result.
\end{proof}

Moreover, to establish the threshold $c^*$ for the condition~$(\mathcal F_3)$ to hold, we will also need the following Br\'{e}zis--Lieb type result (see~\cite{MR699419}
for the original statement):

\begin{lemma}\label{Brezis-Lieb}
Let~$u_n$ be a bounded sequence in~$\mathcal X_p(\Omega)$.
Suppose that~$u_n$ converges to some~$u$ a.e. in~$\R^N$ as~$n\to +\infty$.

Then,
\begin{equation}\label{djieow34LLLtyb5o4yo3tu3493pp}
\int_{[{{ 0 }}, 1]} [u]^p_{s, p}\, d\mu^{\pm}(s) = \lim_{n\to +\infty}\left(\;\int_{[{{ 0 }}, 1]} [u_n]^p_{s, p}\, d\mu^{\pm}(s) - \int_{[{{ 0 }}, 1]} [u_n -u]^p_{s, p}\, d\mu^{\pm}(s) \right).
\end{equation}
\end{lemma}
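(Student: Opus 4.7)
The plan is to recast the statement as a single application of the classical Br\'ezis--Lieb lemma on a suitable product measure space. For $s\in(0,1)$, I define
$$\Phi_n(x,y,s):=c_{N,s,p}^{1/p}\,\frac{u_n(x)-u_n(y)}{|x-y|^{(N+sp)/p}},\qquad \Phi(x,y,s):=c_{N,s,p}^{1/p}\,\frac{u(x)-u(y)}{|x-y|^{(N+sp)/p}},$$
regarded as measurable functions on $\R^{2N}\times(0,1)$ equipped with the product measure $dx\,dy\otimes d\mu^{\pm}(s)$. By construction,
$$\int_{(0,1)}[u_n]_{s,p}^p\,d\mu^{\pm}(s)=\|\Phi_n\|_{L^p(\R^{2N}\times(0,1))}^p,$$
and analogously for $\Phi$ and $\Phi_n-\Phi$, so the desired identity on $(0,1)$ becomes exactly the classical Br\'ezis--Lieb identity for the sequence $\Phi_n$ in this product $L^p$-space.

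To invoke Br\'ezis--Lieb I need two ingredients: $(a)$ that $\Phi_n\to\Phi$ a.e.\ on the product space, and $(b)$ that $\|\Phi_n\|_{L^p}$ is uniformly bounded. For $(a)$: by hypothesis $u_n\to u$ off a Lebesgue-null set $E\subset\R^N$, so $u_n(x)-u_n(y)\to u(x)-u(y)$ for every $(x,y)\in(\R^N\setminus E)^2$, whose complement has Lebesgue measure zero in $\R^{2N}$ and hence zero measure under $dx\,dy\otimes d\mu^{\pm}(s)$; thus $\Phi_n\to\Phi$ a.e.\ on the product space. For $(b)$: in the $\mu^+$ case uniform boundedness of $\|\Phi_n\|_{L^p}^p=\int_{(0,1)}[u_n]_{s,p}^p\,d\mu^+(s)$ is immediate from the assumed boundedness of $\rho_p(u_n)$. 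In the $\mu^-$ case, since $\mu^-|_{[\overline s,1]}=0$ by~\eqref{mu3}, Proposition~\ref{absorb} bounds $\int_{[0,1]}[u_n]_{s,p}^p\,d\mu^-$ by a constant times $\rho_p(u_n)^p$.

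With $(a)$ and $(b)$ in hand, the classical Br\'ezis--Lieb lemma on $(\R^{2N}\times(0,1),\,dx\,dy\otimes d\mu^{\pm})$ yields
$$\lim_n\bigl(\|\Phi_n\|_{L^p}^p-\|\Phi_n-\Phi\|_{L^p}^p\bigr)=\|\Phi\|_{L^p}^p,$$
which unwinds to~\eqref{djieow34LLLtyb5o4yo3tu3493pp} restricted to $s\in(0,1)$. A contribution from an atom of $\mu^{\pm}$ at $s=0$ reduces to the classical Br\'ezis--Lieb lemma in $L^p(\R^N)$, applied directly to $u_n\to u$ a.e. The main subtlety I anticipate concerns a possible atom at $s=1$: there the corresponding identity involves $\|\nabla u_n\|_{L^p}^p$ and would call for a.e.\ convergence of gradients, which is \emph{not} implied by $u_n\to u$ a.e.\ alone. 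However, $\mu^-(\{1\})=0$ by~\eqref{mu3}, so this issue is moot for $\mu^-$; for $\mu^+$ it is handled by the stronger compactness available in the downstream application (typically a Palais--Smale setting), or equivalently by a separate Br\'ezis--Lieb argument for the gradient part. Summing the contributions from $\{0\}$, $(0,1)$, and $\{1\}$ then yields the full identity.
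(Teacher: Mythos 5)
Your proof follows essentially the same route as the paper's: both recast the claim as an application of the classical Br\'ezis--Lieb lemma (the paper cites~\cite[Theorem~1.9]{MR1817225}) to the functions
$\Phi_n(x,y,s)=c_{N,s,p}^{1/p}\,(u_n(x)-u_n(y))/|x-y|^{(N+sp)/p}$
on the product measure space $\bigl(\R^{2N}\times[0,1],\;dx\,dy\,d\mu^{\pm}(s)\bigr)$, with uniform $L^p$-boundedness coming from boundedness of $\rho_p(u_n)$ (and Proposition~\ref{absorb} for $\mu^-$) and pointwise a.e.\ convergence of $\Phi_n$ to $\Phi$ coming from $u_n\to u$ a.e.

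Where you are more careful than the paper is in peeling off the endpoint atoms $\{0\}$ and $\{1\}$, which is worth flagging: the double-integral representation of $[u]_{s,p}$ is only valid for $s\in(0,1)$ (indeed $c_{N,s,p}$ vanishes at both endpoints), so the paper's displayed integrand is, strictly speaking, the wrong object on any atom of $\mu^{\pm}$ at $0$ or $1$; the paper handles this silently through the abuse-of-notation convention in the footnote to Definition~\ref{wsol}. Your treatment of an atom at $s=0$ (apply the scalar Br\'ezis--Lieb to $u_n$ itself) is exactly right. Your worry about an atom of $\mu^+$ at $s=1$ is also legitimate: $u_n\to u$ a.e.\ does not imply $\nabla u_n\to\nabla u$ a.e., so the product-measure Br\'ezis--Lieb argument does not directly cover the $\delta_1$-component. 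This subtlety is present in the paper's proof as well, and your remark that in the intended application ($u_n$ a Palais--Smale sequence in the proof of Proposition~\ref{PSP}) one can recover a.e.\ gradient convergence along a subsequence is the standard way to repair it; it is extra information beyond the literal hypotheses of the lemma, but it is available wherever the lemma is actually used.
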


\begin{proof}
The aim is to apply~\cite[Theorem 1.9]{MR1817225} to the measure space given by
$(\R^{2N}\times [0, 1], \,dx\,dy\, d\mu^+(s))$ and the functions
\[
f_n(x, y, s):= c_{N, s, p}^{1/p} \frac{u_n(x) -u_n(y)}{|x-y|^{\frac{N+sp}{p}}}\qquad\text{ and }\qquad f(x, y, s):= c_{N, s, p}^{1/p} \frac{u(x) -u(y)}{|x-y|^{\frac{N+sp}{p}}}.
\]
We observe that, since~$u_n$ is a bounded sequence in~$\mathcal X_p(\Omega)$,
there exists~$C>0$, independent of~$n$, such that
\begin{eqnarray*}&& C\ge \int_{[0,1]}[u_n]_{s,p}^p \,d\mu^+(s)
=\int_{[0,1]} c_{N, s, p}\iint_{\R^{2N}} \frac{|u_n(x) -u_n(y)|^p}{|x-y|^{N+sp} }\,dx\,dy\,d\mu^+(s)
\\&&\qquad\qquad
= \iiint_{[0,1]\times\R^{2N}} |f_n(x, y, s)|^p\,dx\,dy\,d\mu^+(s)
,\end{eqnarray*}
which implies that~$f_n$ is uniformly bounded in $L^p\big(
\R^{2N}\times [0, 1], \,dx\,dy\, d\mu^+(s)\big)$.

Moreover, since~$u_n$ converges to~$u$ a.e. in~$\R^N$ as~$n\to +\infty$, we have that~$f_n$ converges to~$f$ a.e. in~$\R^{2N}\times[0,1]$ as~$n\to +\infty$. 

Hence, the assumptions in~\cite[Theorem 1.9]{MR1817225} 
are fulfilled, and therefore
\begin{eqnarray*}&&
\int_{[{{ 0 }}, 1]} [u]^p_{s, p}\, d\mu^{+}(s)
= \iiint_{[0,1]\times\R^{2N}} |f(x, y, s)|^p\,dx\,dy\,d\mu^+(s)\\&
=& \lim_{n\to +\infty}\left(\;\;\iiint_{[0,1]\times\R^{2N}} |f_n(x, y, s)|^p\,dx\,dy\,d\mu^+(s)
- \iiint_{[0,1]\times\R^{2N}} |f(x, y, s)-f_n(x,y,s)|^p\,dx\,dy\,d\mu^+(s) \right)\\&
 =& \lim_{n\to +\infty}\left(\;\int_{[{{ 0 }}, 1]} [u_n]^p_{s, p}\, d\mu^{+}(s) - \int_{[{{ 0 }}, 1]} [u_n -u]^p_{s, p}\, d\mu^{+}(s) \right).
\end{eqnarray*}
This establishes the desired result for~$\mu^+$.

We now focus on proving the claim in~\eqref{djieow34LLLtyb5o4yo3tu3493pp}
for~$\mu^-$.
For this, we use~\eqref{mu3} and Proposition~\ref{absorb} to see that
\begin{eqnarray*}&&
\int_{[{{ 0 }}, 1]} [u_n]_{s, p}^p \, d\mu^- (s)=
\int_{[{{ 0 }}, \overline s]} [u_n]_{s, p}^p \, d\mu^- (s) \le c_0(N,\Omega, p) \,\gamma\int_{[\overline s, 1]} [u_n]^p_{s, p} \, d\mu(s) \\&&\qquad\qquad
\le c_0(N,\Omega, p) \,\gamma\int_{[{{ 0 }}, 1]} [u_n]^p_{s, p}\, d\mu^+ (s).
\end{eqnarray*}
This and the fact that~$u_n$ is bounded in~${\mathcal{X}}_p(\Omega)$
give that the quantity
$$\int_{[{{ 0 }}, 1]} [u_n]_{s, p}^p \, d\mu^- (s)$$ is uniformly bounded in~$n$.

Therefore, one can repeat the same argument as above, replacing~$\mu^+$
with~$\mu^-$ and obtain the desired result in~\eqref{djieow34LLLtyb5o4yo3tu3493pp}.
\end{proof}

To check~$(\mathcal F_3)$, we also point out that:

\begin{proposition}\label{PSP}
Let $\theta_0\in(0,1)$ and
\begin{equation}\label{topolino}
c^* := \frac{s_\sharp}{N} \, \Big( (1-\theta_0)\mathcal S(p)\Big)^{N/s_\sharp p}.
\end{equation}

Then, there exists~$\gamma_0>0$, depending on~$N$, $\Omega$, $p$, $s_\sharp$ and~$\theta_0$, such that if~$\gamma\in[0,\gamma_0]$
and~$c\in(0,c^*)$, then
the functional in~\eqref{funp} satisfies the \PS{c} condition.
\end{proposition}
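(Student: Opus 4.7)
The plan is to run a standard concentration-compactness argument for critical growth problems, adapted to the superposition framework, with the signed measure absorbed by Proposition~\ref{absorb} whenever $\gamma$ is taken sufficiently small.

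\textbf{Step 1 (boundedness of Palais-Smale sequences).} Let $u_n\in\mathcal{X}_p(\Omega)$ satisfy $E_p(u_n)\to c$ and $E_p'(u_n)\to 0$. Combining $pE_p(u_n)-\langle E_p'(u_n),u_n\rangle$ one isolates
$$\tfrac{p_{s_\sharp}^*-p}{p_{s_\sharp}^*}\int_\Omega|u_n|^{p_{s_\sharp}^*}\,dx = pc+o(1)+o(\|u_n\|),$$
so the critical norm is controlled modulo $o(\|u_n\|)$. Plugging this into $\langle E_p'(u_n),u_n\rangle=o(\|u_n\|)$, using Proposition~\ref{absorb} to bound $\int_{[0,\bar s]}[u_n]_{s,p}^p\,d\mu^-(s)\le c_0\gamma[\rho_p(u_n)]^p$, and using H\"older plus Proposition~\ref{PRBMI} to control $\lambda\int|u_n|^p$ by the critical norm and hence by a sublinear power of $\|u_n\|$, one gets $(1-c_0\gamma)[\rho_p(u_n)]^p\le C+o(\|u_n\|)$. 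For $\gamma$ small this forces $\rho_p(u_n)$ to be bounded.

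\textbf{Step 2 (weak limit and strong $L^p$ convergence).} By Lemma~\ref{UNCONCVE} and the Milman-Pettis theorem, $\mathcal{X}_p(\Omega)$ is reflexive, so up to a subsequence $u_n\rightharpoonup u$ in $\mathcal{X}_p(\Omega)$ (Lemma~\ref{Vitali}). The assumption~\eqref{mu00} plus Theorem~\ref{SOBOLEVEMBEDDING} give a uniform bound on $[u_n]_{\overline s,p}$, hence the fractional compactness of the embedding $\mathcal{X}_p(\Omega)\hookrightarrow L^p(\Omega)$ yields $u_n\to u$ in $L^p(\Omega)$ and a.e.\ in $\R^N$. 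Passing to the limit in $E_p'(u_n)\to 0$ via Lemma~\ref{Vitali} shows that $u$ is a critical point, so $\langle E_p'(u),u\rangle=0$, which gives $E_p(u)=\frac{s_\sharp}{N}\int_\Omega|u|^{p_{s_\sharp}^*}\,dx\ge 0$.

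\textbf{Step 3 (Br\'ezis-Lieb splitting).} Setting $v_n:=u_n-u$, Lemma~\ref{Brezis-Lieb} together with the classical Br\'ezis-Lieb lemma for the critical $L^{p_{s_\sharp}^*}$ term and strong $L^p$ convergence yields
$$E_p(u_n) = E_p(u) + \tfrac{1}{p}\!\int_{[0,1]}[v_n]_{s,p}^p\,d\mu^+(s) - \tfrac{1}{p}\!\int_{[0,\bar s]}[v_n]_{s,p}^p\,d\mu^-(s) - \tfrac{1}{p_{s_\sharp}^*}\!\int_\Omega|v_n|^{p_{s_\sharp}^*}\,dx + o(1)$$
and the analogous splitting for $\langle E_p'(u_n),u_n\rangle-\langle E_p'(u),u\rangle$. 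Writing $L:=\lim\int_\Omega|v_n|^{p_{s_\sharp}^*}\,dx$, the latter identity combined with Proposition~\ref{absorb} gives
$$(1-c_0\gamma)\lim_n\!\int_{[0,1]}[v_n]_{s,p}^p\,d\mu^+(s)\le L.$$

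\textbf{Step 4 (Sobolev inequality and level threshold).} By the definition~\eqref{DESOCO} of $\mathcal{S}(p)$,
$$\mathcal{S}(p)\bigl(\|v_n\|_{L^{p_{s_\sharp}^*}}^p\bigr)\le \int_{[0,1]}[v_n]_{s,p}^p\,d\mu^+(s),$$
so either $L=0$ or $L\ge\bigl((1-c_0\gamma)\mathcal{S}(p)\bigr)^{N/(s_\sharp p)}$, using the arithmetic identity $p_{s_\sharp}^*/(p_{s_\sharp}^*-p)=N/(s_\sharp p)$. The energy splitting then collapses to
$$c = E_p(u) + \tfrac{s_\sharp}{N}L \ge \tfrac{s_\sharp}{N}L.$$
If $\gamma_0$ is chosen so that $c_0\gamma_0\le\theta_0$, then $L>0$ would force $c\ge c^*$, contradicting $c<c^*$. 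Hence $L=0$, which via the inequality above yields $\int_{[0,1]}[v_n]_{s,p}^p\,d\mu^+(s)\to 0$, i.e.\ $\rho_p(v_n)\to 0$ and $u_n\to u$ strongly in $\mathcal{X}_p(\Omega)$.

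The main technical obstacle is Step~3: one needs the Br\'ezis-Lieb decomposition to hold not only for a single fractional seminorm but uniformly for the measure integration, which is exactly what Lemma~\ref{Brezis-Lieb} provides, and one needs Proposition~\ref{absorb} to ensure the $\mu^-$-contribution of $v_n$ does not spoil the Sobolev-type lower bound on $L$.
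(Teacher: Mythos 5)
Your proposal is correct and follows essentially the same strategy as the paper's proof: establish boundedness of Palais--Smale sequences (using the usual combination $pE_p(u_n)-\langle E_p'(u_n),u_n\rangle$ together with Proposition~\ref{absorb} to absorb the $\mu^-$-contribution for $\gamma$ small), extract a weak limit $u$ that is a critical point, apply the Br\'ezis--Lieb decomposition of Lemma~\ref{Brezis-Lieb} plus the classical one for the critical term to split the energy and the derivative along $v_n=u_n-u$, and then invoke the Sobolev constant~\eqref{DESOCO} to produce the dichotomy on $L=\lim\|v_n\|_{L^{p^*_{s_\sharp}}}^{p^*_{s_\sharp}}$ and rule out $L>0$ below the level $c^*$. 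The only (inconsequential) organizational difference is at the end: you argue by the dichotomy $L=0$ or $L\ge\bigl((1-c_0\gamma)\mathcal S(p)\bigr)^{N/(s_\sharp p)}$ and use $c=E_p(u)+\tfrac{s_\sharp}{N}L\ge\tfrac{s_\sharp}{N}L$, whereas the paper factorizes $\rho_p(\widetilde u_n)^p\bigl[(1-C^{2p}\gamma)\mathcal S(p)^{p^*_{s_\sharp}/p}-\rho_p(\widetilde u_n)^{p^*_{s_\sharp}-p}\bigr]\le0$ and shows directly that the bracket has positive $\liminf$ when $c<c^*$; unwinding the paper's inequality gives exactly your threshold $c_0\gamma\le\theta_0$, so the two formulations coincide.
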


\begin{proof}
Take~$c\in(0, c^*)$ and let~$u_n \subset \mathcal{X}_p(\Omega)$ be a sequence verifying
\begin{equation}\label{PS1}\begin{split}\lim_{n\to+\infty}
E(u_n) =\,&\lim_{n\to+\infty} \frac1p [\rho_p(u_n)]^p -\frac1p\int_{[{{{0}}}, \overline s]} [u_n]_{s, p}^p \, d\mu^-(s) - \frac{\lambda}{p} \int_\Omega |u_n|^p \, dx - \frac{1}{p_{s_\sharp}^*} \int_\Omega |u_n|^{p_{s_\sharp}^*}\, dx\\
=\,& c\end{split}
\end{equation}
and
\begin{equation}\label{PS2}
\begin{split}&\lim_{n\to+\infty}
\sup_{v \in {\mathcal{X}}_p(\Omega)}
( dE (u_n), v) 
\\= \,&\lim_{n\to+\infty}
\sup_{v \in {\mathcal{X}}_p(\Omega)}\int_{[{{{0}}}, 1]}\left(\;
c_{N,s,p}\iint_{\R^{2N}} \frac{|u_n(x) - u_n(y)|^{p-2}\, (u_n(x) - u_n(y))\, (v(x) - v(y))}{|x - y|^{N+sp}}\, dx dy\right) d\mu^+(s)\\
&\qquad-\int_{[{{{0}}}, \overline s]}\left(\;c_{N,s,p}\iint_{\R^{2N}} \frac{|u_n(x) - u_n(y)|^{p-2}\, (u_n(x) - u_n(y))\, (v(x) - v(y))}{|x - y|^{N+sp}}\, dx dy\right) d\mu^-(s)\\
&\qquad- \lambda \int_\Omega |u_n|^{p-2}\, u_n v\, dx - \int_\Omega |u_n|^{p_{s_\sharp}^* - 2}\, u_n v\, dx \\=\,& 0.
\end{split}
\end{equation}
Testing~\eqref{PS2} with~$v:=-u_n$,
\begin{equation*}
\begin{split}0\le \,&\lim_{n\to+\infty}
\int_{[{{{0}}}, 1]}\left(\;c_{N,s,p}\iint_{\R^{2N}} \frac{|u_n(x) - u_n(y)|^{p}}{|x - y|^{N+sp}}\, dx dy\right) d\mu^+(s)\\
&\qquad-\int_{[{{{0}}}, \overline s]}\left(\;c_{N,s,p}\iint_{\R^{2N}} \frac{|u_n(x) - u_n(y)|^{p}}{|x - y|^{N+sp}}\, dx dy\right) d\mu^-(s)\\
&\qquad- \lambda \int_\Omega |u_n|^{p}\, dx - \int_\Omega |u_n|^{p_{s_\sharp}^* }\, dx\\
=\,&\lim_{n\to+\infty}
p\,E(u_n)+\left(\frac{p}{p^*_{s_\sharp}}-1
\right)\int_\Omega |u_n|^{p_{s_\sharp}^* }\, dx,
\end{split}
\end{equation*}
and then, by~\eqref{PS1}, 
\begin{equation}\label{boh}\lim_{n\to+\infty}
\left(\frac{1}{p}-\frac{1}{p^*_{s_\sharp}}\right) \|u_n\|_{L^{p^*_{s_\sharp}}(\R^N)}^{p^*_{s_\sharp}} \le c .
\end{equation}

Furthermore, from Proposition~\ref{absorb} we deduce that
\[
\int_{[{{{0}}}, \overline s]} [u_n]_{s, p}^p \, d\mu^-(s) \le c_0 \gamma \int_{[\overline s,1]} [u_n]_{ s, p}^p \, d\mu^+(s) \le c_0 \gamma[\rho_p(u_n)]^p .
\]
Hence,
\begin{equation}\label{PKSD-LPAIKSEASCMC}
[\rho_p(u_n)]^p -\int_{[{{{0}}}, \overline s]} [u_n]_{s, p}^p \, d\mu^-(s)\ge
\Big(1-C^{2p}(N,\Omega,p)\gamma\Big)
[\rho_p(u_n)]^p.
\end{equation}
{F}rom this and~\eqref{PS1}, we obtain that~$\rho_p(u_n)$ is bounded
uniformly in~$n$, provided that~$1-C^{2p}(N,\Omega,p)\gamma>0$.

Accordingly, in view of Lemma~\ref{Vitali}, there exists~$u\in\mathcal{X}_p(\Omega)$ such that, up to subsequences,
\begin{alignat}{2}\nonumber
& u_n\rightharpoonup u && \text{ in } \mathcal{X}_p(\Omega),\\\label{4.7}
& u_n\to u && \text{ in } L^r(\Omega) \text{ for any } r\in [1, p^*_{s_\sharp}),\\\nonumber
& u_n\to u && \text{ a.e.  in } \Omega.
\end{alignat}

It remains to prove that~$u_n \to u$ in~$\mathcal{X}_p(\Omega)$ as~$n\to+\infty$. 
For this, we set~$\widetilde{u}_n :=u_n - u$. By~\cite[Theorem~1]{MR699419} we have that
\begin{equation} \label{L32}
\|u\|^{p^*_{s_\sharp}}_{L^{p^*_{s_\sharp}}(\R^N)}
= \lim_{n\to+\infty}\|u_n\|^{p^*_{s_\sharp}}_{L^{p^*_{s_\sharp}}(\R^N)} - \|\widetilde{u}_n\|_{L^{p^*_{s_\sharp}}(\R^N)}^{p^*_{s_\sharp}}.
\end{equation}
Moreover, by~Lemma \ref{Brezis-Lieb} we get that
\begin{equation} \label{L32BIS}
\int_{[{{{0}}}, 1]} [u]^p_{s, p}\, d\mu^\pm(s)= 
\lim_{n\to+\infty}\int_{[{{{0}}}, 1]} [u_n]^p_{s, p}\, d\mu^\pm(s)-
\int_{[{{{0}}}, 1]} [\widetilde u_n]^p_{s, p}\, d\mu^\pm(s) 
.\end{equation}

Hence, testing Definition~\ref{wsol} with~$v:=u$, 
\begin{equation}\label{test1}
[\rho_p (u)]^p -\int_{[{{{0}}}, \overline s]} [u]^p_{s, p}\, d\mu^-(s) = \lambda \|u\|^p_{L^p(\R^N)} + \|u\|_{L^{p^*_{s_\sharp}}(\R^N)}^{p^*_{s_\sharp}}.\end{equation} 
Similarly, testing identity~\eqref{PS2} with~$v:=\pm u_n$,
\begin{equation*}
\lim_{n\to+\infty}
[\rho_p (u_n)]^p  -\int_{[{{{0}}}, \overline s]} [u_n]^p_{s, p}\, d\mu^-(s) - \lambda \|u_n\|^p_{L^p(\R^N)} - \|u_n\|_{L^{p^*_{s_\sharp}}(\R^N)}^{p^*_{s_\sharp}} =0.
\end{equation*}
This and~\eqref{4.7} give that
\begin{equation}\label{L32BIS03plrf-73874}
\lim_{n\to+\infty}
[\rho_p (u_n)]^p  -\int_{[{{{0}}}, \overline s]} [u_n]^p_{s, p}\, d\mu^-(s) - \|u_n\|_{L^{p^*_{s_\sharp}}(\R^N)}^{p^*_{s_\sharp}} =\lambda \|u\|^p_{L^p(\R^N)}.
\end{equation}
We compare this with~\eqref{test1} and we see that
\begin{equation*}
\lim_{n\to+\infty}
[\rho_p (u_n)]^p  -\int_{[{{{0}}}, \overline s]} [u_n]^p_{s, p}\, d\mu^-(s) - \|u_n\|_{L^{p^*_{s_\sharp}}(\R^N)}^{p^*_{s_\sharp}} =[\rho_p (u)]^p -\int_{[{{{0}}}, \overline s]} [u]^p_{s, p}\, d\mu^-(s)-\|u\|_{L^{p^*_{s_\sharp}}(\R^N)}^{p^*_{s_\sharp}}.
\end{equation*}
Hence, in light of~\eqref{L32BIS},
\begin{equation*}
\lim_{n\to+\infty}
[\rho_p (\widetilde u_n)]^p  -\int_{[{{{0}}}, \overline s]} [\widetilde u_n]^p_{s, p}\, d\mu^-(s) - \|u_n\|_{L^{p^*_{s_\sharp}}(\R^N)}^{p^*_{s_\sharp}} = -\|u\|_{L^{p^*_{s_\sharp}}(\R^N)}^{p^*_{s_\sharp}}.
\end{equation*}
Accordingly, by~\eqref{L32},
\begin{equation}\label{NBSFVCECGNAS}
\lim_{n\to+\infty}
[\rho_p (\widetilde u_n)]^p  -\int_{[{{{0}}}, \overline s]} [\widetilde u_n]^p_{s, p}\, d\mu^-(s) - \|\widetilde u_n\|_{L^{p^*_{s_\sharp}}(\R^N)}^{p^*_{s_\sharp}} = 0.
\end{equation}

Combining this and the definition of the Sobolev constant in~\eqref{DESOCO}, 
\begin{equation*}
\lim_{n\to+\infty}
[\rho_p (\widetilde u_n)]^p  -\int_{[{{{0}}}, \overline s]} [\widetilde u_n]^p_{s, p}\, d\mu^-(s) 
-\left(\frac1{{\mathcal S(p)}}\int_{[{{{0}}},1]} [\widetilde u_n]^p_{s, p}\, d\mu^+(s)\right)^{p^*_{s_\sharp}/p}
\le 0,
\end{equation*}
that is
\begin{equation*}
\lim_{n\to+\infty}
[\rho_p (\widetilde u_n)]^p  -\int_{[{{{0}}}, \overline s]} [\widetilde u_n]^p_{s, p}\, d\mu^-(s) 
-\frac{[\rho_p (\widetilde u_n)]^{p^*_{s_\sharp}}}{({\mathcal S(p)})^{p^*_{s_\sharp}/p}}
\le 0.
\end{equation*}
We now recall~\eqref{PKSD-LPAIKSEASCMC}, applied here to~$\widetilde u_n$ instead of~$u_n$, and we conclude that
\begin{equation*}
\lim_{n\to+\infty}
\Big(1-C^{2p}(N,\Omega,p)\gamma\Big)[\rho_p(\widetilde u_n)]^p
-\frac{[\rho_p (\widetilde u_n)]^{p^*_{s_\sharp}}}{({\mathcal S(p)})^{p^*_{s_\sharp}/p}}
\le 0,
\end{equation*}
or equivalently
\begin{equation}\label{moOmdRIJSmdfGOIANMSnao0lol}
\lim_{n\to+\infty}
[\rho_p(\widetilde u_n)]^p
\left[ \Big(1-C^{2p}(N,\Omega,p)\gamma\Big){({\mathcal S(p)})^{p^*_{s_\sharp}/p}}
-{[\rho_p (\widetilde u_n)]^{p^*_{s_\sharp}-p}}\right]\le 0.
\end{equation}

Now we reconsider~\eqref{PS1} in the light of~\eqref{L32BIS03plrf-73874} and we see that
\begin{eqnarray*}c&=&\lim_{n\to+\infty}
\frac1p [\rho_p(u_n)]^p -\frac1p\int_{[{{{0}}}, \overline s]} [u_n]_{s, p}^p \, d\mu^-(s) - \frac{\lambda}{p} \int_\Omega |u_n|^p \, dx - \frac{1}{p_{s_\sharp}^*} \int_\Omega |u_n|^{p_{s_\sharp}^*}\, dx\\&=&\lim_{n\to+\infty}
\left(\frac1p-\frac1{p^*_{s_\sharp}}\right)\left( [\rho_p(u_n)]^p -\int_{[{{{0}}}, \overline s]} [u_n]_{s, p}^p \, d\mu^-(s) \right)- \frac{\lambda}{p} \int_\Omega |u_n|^p \, dx +
\frac\lambda{p^*_{s_\sharp}}\int_\Omega|u|^p\,dx.
\end{eqnarray*}
This and the strong convergence in~\eqref{4.7} yield that
\begin{eqnarray*}c&=&\lim_{n\to+\infty}
\frac{s_\sharp}N\left( [\rho_p(u_n)]^p -\int_{[{{{0}}}, \overline s]} [u_n]_{s, p}^p \, d\mu^-(s) \right)- \frac{\lambda \,s_\sharp}{N} \int_\Omega |u|^p \, dx.
\end{eqnarray*}
{F}rom this and~\eqref{L32BIS} we arrive at
\begin{eqnarray*}\frac{cN}{s_\sharp}&=&\lim_{n\to+\infty}
[\rho_p(u)]^p+[\rho_p(\widetilde u_n)]^p -\int_{[{{{0}}}, \overline s]} [u]_{s, p}^p \, d\mu^-(s)-\int_{[{{{0}}}, \overline s]} [\widetilde u_n]_{s, p}^p \, d\mu^-(s)- \lambda\int_\Omega |u|^p \, dx.
\end{eqnarray*}
As a consequence, by~\eqref{test1},
\begin{eqnarray*}\frac{cN}{s_\sharp}&=&\lim_{n\to+\infty}
[\rho_p(\widetilde u_n)]^p -\int_{[{{{0}}}, \overline s]} [\widetilde u_n]_{s, p}^p \, d\mu^-(s)
+\int_\Omega |u|^{p^*_{s_\sharp}} \, dx.
\end{eqnarray*}
Hence, recalling~\eqref{PKSD-LPAIKSEASCMC} (used here for~$\widetilde u_n$ instead of~$u_n$),
\begin{eqnarray*}\frac{cN}{s_\sharp}&\ge&\lim_{n\to+\infty}\Big(1-C^{2p}(N,\Omega,p)\gamma\Big)
[\rho_p(\widetilde u_n)]^p +\int_\Omega |u|^{p^*_{s_\sharp}} \, dx\\
&\ge&\lim_{n\to+\infty}\Big(1-C^{2p}(N,\Omega,p)\gamma\Big)
[\rho_p(\widetilde u_n)]^p.
\end{eqnarray*}
This and~\eqref{topolino} tell us that
\begin{eqnarray*}\Big( (1-\theta_0)\mathcal S(p)\Big)^{N/s_\sharp p}=
\frac{c^* N}{s_\sharp}>
\frac{cN}{s_\sharp}\ge\lim_{n\to+\infty}\Big(1-C^{2p}(N,\Omega,p)\gamma\Big)
[\rho_p(\widetilde u_n)]^p
\end{eqnarray*}
and consequently
\begin{eqnarray*} &&\liminf_{n\to+\infty}\Big(1-C^{2p}(N,\Omega,p)\gamma\Big){({\mathcal S(p)})^{p^*_{s_\sharp}/p}}
-{[\rho_p (\widetilde u_n)]^{p^*_{s_\sharp}-p}}\\&&\qquad=\Big(1-C^{2p}(N,\Omega,p)\gamma\Big){({\mathcal S(p)})^{N/(N-s_\sharp p)}}-\limsup_{n\to+\infty} [\rho_p (\widetilde u_n)]^{{s_\sharp}p^2/(N-{s_\sharp}p)}\\
&&\qquad\ge
\Big(1-C^{2p}(N,\Omega,p)\gamma\Big){({\mathcal S(p)})^{N/(N-s_\sharp p)}}-
\left(\frac{\Big( (1-\theta_0)\mathcal S(p)\Big)^{N/s_\sharp p}}{1-C^{2p}(N,\Omega,p)\gamma
}\right)^{{s_\sharp}p/(N-{s_\sharp}p)}\\&&\qquad=
\left[
1-C^{2p}(N,\Omega,p)\gamma-
\left(\frac{ (1-\theta_0)^{N/s_\sharp p}}{1-C^{2p}(N,\Omega,p)\gamma
}\right)^{{s_\sharp}p/(N-{s_\sharp}p)}\right]
{({\mathcal S(p)})^{N/(N-s_\sharp p)}}.
\end{eqnarray*}

Thus, since
$$\lim_{\gamma\searrow0}
1-C^{2p}(N,\Omega,p)\gamma-
\left(\frac{ (1-\theta_0)^{N/s_\sharp p}}{1-C^{2p}(N,\Omega,p)\gamma
}\right)^{{s_\sharp}p/(N-{s_\sharp}p)}=
1-(1-\theta_0)^{N/(N-{s_\sharp}p)}>0,$$
we infer that
$$ \liminf_{n\to+\infty}\Big(1-C^{2p}(N,\Omega,p)\gamma\Big){({\mathcal S(p)})^{p^*_{s_\sharp}/p}}
-{[\rho_p (\widetilde u_n)]^{p^*_{s_\sharp}-p}}>0,$$
as long as~$\gamma$ is sufficiently small.

Combining this information with~\eqref{moOmdRIJSmdfGOIANMSnao0lol}, we gather that
$$ \lim_{n\to+\infty} \rho_p(\widetilde u_n)=0,$$
and thus $u_n \to u$ in~$\mathcal{X}_p(\Omega)$ as~$n\to+\infty$.
\end{proof}

Moreover, as it concerns~$(\mathcal N_1)$, in view of~\eqref{Npippoepluto}
and Theorem~\ref{SOBOLEVEMBEDDING} we have that
$$ N_p(u)=\frac1p\int_{[{{{0}}}, \overline s]} [u]_{s, p}^p \, d\mu^-(s)\le 
\frac{C(N,\Omega,p)\,\mu^-\big( {[{{{0}}}, \overline s]}\big)}p\,[u]^p_{\overline s,p}.$$
Hence, recalling~\eqref{mu2} and using again Theorem~\ref{SOBOLEVEMBEDDING},
$$ N_p(u)\le\frac{C(N,\Omega,p)\,\gamma\,\mu^+\big([\overline s, 1]\big)}p\,[u]^p_{\overline s,p}
\le\frac{\big( C(N,\Omega,p)\big)^2\,\gamma}p\,\int_{[\overline s, 1]}[u]^p_{ s,p}\,d\mu^+(s)
.$$
As a result, we can take
$$ \eta:=\big( C(N,\Omega,p)\big)^2\,\gamma $$
and notice that, if~$\gamma$ is sufficiently small, possibly in dependence
of~$N$, $\Omega$ and~$p$, then~$\eta\in(0,1)$.
In this case,
$$N_p(u)\le
\frac{\eta}p \,[\rho_p(u)]^p=\eta \,I_p(u),
$$
and~$(\mathcal N_1)$ is thereby established.
\medskip

In this way we have:

\begin{proof}[Proof of Theorem~\ref{mainp1}]
We have just checked that all the abstract assumptions in Section~\ref{AB2} are fulfilled
under the hypotheses of Theorem~\ref{mainp1}. We can thereby exploit Theorem~\ref{Theorem 2.9}, with the choices of~$q$, $\beta$ and~$c^* $
as in~\eqref{plutoepaper}
and~\eqref{topolino},
which in turn implies the thesis of Theorem~\ref{mainp1} as a particular case.

To make this argument work, we have to check that one can choose~$\theta_0\in(0,1)$
in~\eqref{topolino} so to satisfy~\eqref{2.11}. To this end, we observe that~\eqref{2.11} in this specific case boils down to
\begin{equation*} \lambda_l - \left(\frac1{|\Omega|^{(p_{s_\sharp}^* -p)/p }}\right)^{p/p^*_{s_\sharp}} \left(\frac{p\,p^*_{s_\sharp}\,c^*}{p^*_{s_\sharp} - p}\right)^{1 - p/p^*_{s_\sharp}} < \lambda < \lambda_l ,\end{equation*}
that is
\begin{equation*} \lambda_l - \left(\frac{N\,c^*}{|\Omega|\, {s_\sharp} }\right)^{{s_\sharp}p/N} < \lambda < \lambda_l ,\end{equation*}
which in turn, recalling~\eqref{topolino}, reduces to
\begin{equation*} \lambda_l - \frac{(1-\theta_0)\mathcal S(p)}{|\Omega|^{{s_\sharp}p/N}} < \lambda < \lambda_l .\end{equation*}
This condition is fulfilled thanks to the structural assumption in~\eqref{LLP1}, which allows us to define
\begin{equation*} \theta_0:=
\frac12\left(1-\frac{|\Omega|^{{s_\sharp}p/N}}{\mathcal S(p)}(\lambda_l-\lambda)\right)\in(0,1).\qedhere\end{equation*}
\end{proof}

\section{Applications}\label{sec-app}

The overall versatility of the measure $\mu$ that we treat in this paper allows us
to treat a variety of operators.
In this section we focus our attention on illustrating some examples that stem from the particular choice of the measure~$\mu$.

We point out that most of the multiplicity results provided here are new and they enrich the already existing literature.
\medskip

To start with, we showcase multiplicity results for the classical~$p$-Laplacian and the fractional~$p$-Laplacian. Indeed, with specific choices of the measure~$\mu$,
Theorem~\ref{mainp1} recovers~\cite[Theorem 1.1]{MR3469053} and~\cite[Theorem 1.1]{PSY}).

\begin{corollary}\label{C1}
Let~$p\in (1, N)$, let~$p^*:= Np/(N-p)$ be the classical critical Sobolev exponent and $S_p$ the classical best Sobolev constant.  Suppose that, for some~$l$, $ m \ge 1$, 
\begin{equation*}
\lambda_l - \frac{S_{p}}{|\Omega|^{p/N}} < \lambda < \lambda_l = \dots = \lambda_{l+m-1}.
\end{equation*}

Then, the following problem
\begin{equation*}
\left\{\begin{aligned}
-\Delta_p \, u & = \lambda |u|^{p-2} u + |u|^{p^* - 2}\, u && \text{in } \Omega,\\
u & = 0 && \text{on } \partial\Omega\end{aligned}\right.
\end{equation*}
has~$m$ distinct pairs of nontrivial solutions~$\pm u^\lambda_1,\dots,\pm u^\lambda_m$.
\end{corollary}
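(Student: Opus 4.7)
The plan is to derive Corollary~\ref{C1} as a direct specialization of Theorem~\ref{mainp1} by choosing the measure~$\mu$ concentrated at the classical endpoint~$s=1$. Concretely, I would take
\[
\mu := \delta_1, \qquad {\mbox{so that }} \mu^+ = \delta_1 \text{ and } \mu^- = 0,
\]
and select~$\overline s := 1$ and~$s_\sharp := 1$. Then, since~$(-\Delta)_p^1 = -\Delta_p$ by the convention set in Section~\ref{AB1}, the operator reduces to
\[
A_{\mu,p}u = \int_{[0,1]} (-\Delta)_p^s u \, d\mu(s) = -\Delta_p u,
\]
and the fractional critical exponent~$p^*_{s_\sharp} = p^*_1 = Np/(N-p) = p^*$ is the classical Sobolev exponent.

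Next, I would verify the structural assumptions of Theorem~\ref{mainp1}. Conditions~\eqref{mu00} and~\eqref{scritico} hold because~$\mu^+([\overline s, 1]) = \mu^+(\{1\}) = 1 > 0$. Condition~\eqref{mu3} is trivial since~$\mu^- \equiv 0$, and condition~\eqref{mu2} holds with~$\gamma = 0$, because~$\mu^-([0,\overline s]) = 0$. Hence the admissibility requirement on~$\gamma$ is automatic and any~$\gamma_0 > 0$ works. Moreover, the eigenvalues~$\lambda_l$ appearing in Theorem~\ref{mainp1} coincide with the cohomological-index eigenvalues of the Dirichlet~$p$-Laplacian, since~\eqref{EUGE} becomes exactly the standard nonlinear eigenvalue problem~$-\Delta_p u = \lambda|u|^{p-2}u$ in~$\Omega$, $u = 0$ on~$\partial\Omega$.

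The key identification is that the Sobolev constant~$\mathcal S(p)$ of~\eqref{DESOCO} coincides with the classical best Sobolev constant~$S_p$ in this setting. Indeed, since~$\mu^+ = \delta_1$,
\[
\int_{[0,1]} [u]_{s,p}^p \, d\mu^+(s) = [u]_{1,p}^p = \|\nabla u\|_{L^p(\R^N)}^p,
\]
so the infimum over measurable functions with~$\|u\|_{L^{p^*}(\R^N)} = 1$ is precisely~$S_p$. Consequently, the hypothesis~\eqref{LLP1} reduces exactly to the spectral gap condition of Corollary~\ref{C1}. I do not expect any genuine obstacle here; the whole argument is a bookkeeping verification, the only subtle point being the correct matching of normalizing constants so that~$\mathcal S(p) = S_p$, which is guaranteed by the limit convention~$\lim_{s\nearrow 1}[u]_{s,p} = \|\nabla u\|_{L^p(\R^N)}$ built into the definition of~$c_{N,s,p}$.

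With all hypotheses in force and~$\gamma = 0 \in [0,\gamma_0]$, Theorem~\ref{mainp1} yields the existence of~$m$ distinct pairs of nontrivial weak solutions~$\pm u^\lambda_1, \dots, \pm u^\lambda_m$ of~\eqref{mainp}, which in the present specialization is precisely the Dirichlet problem in Corollary~\ref{C1}. This concludes the proof.
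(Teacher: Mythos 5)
Your proof is correct and follows exactly the same route as the paper: choose~$\mu:=\delta_1$, $\overline s:=1$, $s_\sharp:=1$, $\gamma=0$, observe that~$\mathcal S(p)$ reduces to the classical best Sobolev constant~$S_p$, and invoke Theorem~\ref{mainp1}. Your write-up simply spells out in more detail the verification of~\eqref{mu00}, \eqref{mu3}, \eqref{mu2} and the identification of~$\mathcal S(p)$ with~$S_p$, which the paper states more briefly.
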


\begin{proof}
Let~$\mu:= \delta_1$ be the Dirac measure centered at the point~$1$.
Notice that~$\mu$ verifies conditions~\eqref{mu00}, \eqref{mu3}
and~\eqref{mu2} with~$\overline s:=1$ and~$\gamma=0$. 
Furthermore, we can take~$s_\sharp:=1$, so that~${\mathcal{S}(p)}$ defined in~\eqref{DESOCO} boils down to the classical Sobolev constant~$S_p$.
The desired result now follows from Theorem~\ref{mainp1}.
\end{proof}

\begin{corollary}\label{C2}
Let~$s\in ({{ 0 }}, 1)$ and~$p\in (1, N)$. 
Let~$p_s^* := Np/(N-sp)$ be the fractional critical Sobolev exponent and let~$S_p(s)$ be the fractional best Sobolev constant.  Suppose that, for some~$l$, $m \ge 1$,
\begin{equation}\label{lambda-pslap}
\lambda_l - \frac{S_{p}(s)}{|\Omega|^{sp/N}} < \lambda < \lambda_l = \dots = \lambda_{l+m-1}.
\end{equation}

Then, the following problem
\begin{equation*}
\left\{\begin{aligned}
(-\Delta)^s_p \, u & = \lambda |u|^{p-2} u + |u|^{p_s^* - 2}\, u && \text{in } \Omega,\\
u & = 0 && \text{in } \R^N \setminus \Omega\end{aligned}\right.
\end{equation*}
has~$m$ distinct pairs of nontrivial solutions~$\pm u^\lambda_1,\dots,\pm u^\lambda_m$.
\end{corollary}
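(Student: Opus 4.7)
The plan is to deduce this corollary as a direct specialization of Theorem~\ref{mainp1}, in perfect analogy with the proof of Corollary~\ref{C1}. The key observation is that by choosing the signed measure $\mu$ to be a Dirac mass concentrated at the single fractional exponent $s$, the superposition operator $A_{\mu,p}$ collapses to the single fractional $p$-Laplacian $(-\Delta)^s_p$, and all the structural hypotheses on $\mu$ become trivial.

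Concretely, I would set $\mu := \delta_s$ (the Dirac measure at the point $s \in (0,1)$), so that $\mu^+ := \delta_s$ and $\mu^- := 0$. With the choice $\overline{s} := s$, condition~\eqref{mu00} holds since $\mu^+([\overline{s},1]) = \delta_s([s,1]) = 1 > 0$; condition~\eqref{mu3} is trivial because $\mu^- \equiv 0$; condition~\eqref{mu2} is satisfied with $\gamma := 0$ since its left-hand side vanishes. Next, to identify the critical exponent, take $s_\sharp := s$ in~\eqref{scritico}, which forces $p_{s_\sharp}^* = p_s^*$. Finally, the Sobolev constant $\mathcal{S}(p)$ defined by~\eqref{DESOCO} becomes
\[
\mathcal{S}(p) = \inf \left\{ [u]_{s,p}^p \,:\, \|u\|_{L^{p_s^*}(\R^N)} = 1 \right\} = S_p(s),
\]
precisely the fractional best Sobolev constant.

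With these identifications, the problem~\eqref{mainp} reduces exactly to the stated Dirichlet problem for $(-\Delta)^s_p$ with Sobolev-critical nonlinearity, and the spectral gap condition~\eqref{LLP1} coincides with~\eqref{lambda-pslap}. Since $\gamma = 0 \in [0,\gamma_0]$ automatically, Theorem~\ref{mainp1} produces $m$ distinct pairs of nontrivial solutions $\pm u^\lambda_1,\dots,\pm u^\lambda_m$, which is the conclusion.

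I do not expect any genuine obstacle here: the entire content of the corollary has been packaged into the abstract machinery of Theorem~\ref{mainp1}, and the verification amounts to recognizing that the one-term measure trivially fulfills the ``dominance'' hypothesis~\eqref{mu2}. The only minor point worth checking is the normalization of $(-\Delta)^s_p$: one must confirm that the constant $c_{N,s,p}$ appearing in the definition of $(-\Delta)^s_p$ and in $[u]_{s,p}$ is the same, so that the eigenvalues $\lambda_l$ produced by the abstract spectral theory coincide with those associated to the classical fractional $p$-Laplacian, and that $\mathcal{S}(p)$ indeed matches $S_p(s)$ with the chosen normalization. Both identifications are built into the definitions recalled in Section~\ref{AB1}.
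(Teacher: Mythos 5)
Your proposal is correct and takes exactly the same route as the paper: choose $\mu = \delta_s$, take $\overline{s} = s_\sharp = s$ and $\gamma = 0$, observe that \eqref{mu00}, \eqref{mu3}, \eqref{mu2} hold trivially, identify $\mathcal{S}(p)$ with $S_p(s)$, and invoke Theorem~\ref{mainp1}. Your additional remark about the normalizing constant is a reasonable sanity check but is already built into the paper's conventions.
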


\begin{proof}
In this case we take~$\mu:= \delta_s$, being~$\delta_s$
the Dirac measure centered at~$s$. 
With this choice, $\mu$ satisfies the conditions~\eqref{mu00}, \eqref{mu3}
and~\eqref{mu2} with~$\overline s:=s$ and~$\gamma=0$.
Moreover, here we can take~$s_\sharp:=s$, and then the desired result follows from Theorem~\ref{mainp1}.
\end{proof}

The next result provides the existence of multiple solutions for the mixed operator $-\Delta_p + (-\Delta)_p^s$. To our best knowledge, this result is new even for the case $p=2$:
\begin{corollary}\label{C3}
Let~$s\in [{{ 0 }}, 1)$ and~$p\in (1, N)$. 
Let~$p^* = Np/(N-p)$ be the classical critical Sobolev exponent and let $S_p$ be the best Sobolev constant.  Suppose that, for some~$l$, $ m \ge 1$,
\begin{equation*}
\lambda_l - \frac{S_{p}}{|\Omega|^{p/N}} < \lambda < \lambda_l = \dots = \lambda_{l+m-1}.
\end{equation*}

Then, the following problem
\begin{equation*}
\left\{\begin{aligned}
-\Delta_p \, u + (-\Delta)_p^s \, u & = \lambda |u|^{p-2} u + |u|^{p^* - 2}\, u && \text{in } \Omega,\\
u & = 0 && \text{in } \R^N \setminus \Omega\end{aligned}\right.
\end{equation*}
has~$m$ distinct pairs of nontrivial solutions~$\pm u^\lambda_1,\dots,\pm u^\lambda_m$.
\end{corollary}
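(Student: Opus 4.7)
The plan is to deduce Corollary~\ref{C3} from Theorem~\ref{mainp1} by a suitable choice of the measure. Specifically, I would set $\mu := \delta_1 + \delta_s$, where $\delta_1$ and $\delta_s$ denote the Dirac measures at~$1$ and at~$s\in[0,1)$ respectively. With this choice,
\[
A_{\mu,p} u = \int_{[0,1]}(-\Delta)_p^{\sigma} u\,d\mu(\sigma) = -\Delta_p u + (-\Delta)_p^s u,
\]
which is exactly the operator appearing in the corollary.

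Since $\mu$ is a positive measure, we have $\mu^+ = \mu$ and $\mu^- \equiv 0$. Choose $\overline s := 1$: then condition~\eqref{mu00} holds because $\mu^+([1,1]) = 1 > 0$; condition~\eqref{mu3} is trivial as $\mu^- \equiv 0$; and condition~\eqref{mu2} is satisfied with $\gamma = 0$. Next, take $s_\sharp := 1$, which is admissible in~\eqref{scritico} since $\mu^+([1,1]) = 1 > 0$. This is the ``largest possible'' choice of $s_\sharp$, which yields the strongest conclusion; moreover $p^*_{s_\sharp} = p^* = Np/(N-p)$, matching the critical exponent in the statement.

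It remains to translate hypothesis~\eqref{LLP1} into the assumption of the corollary. The Sobolev constant defined in~\eqref{DESOCO} becomes
\[
\mathcal{S}(p) \;=\; \inf\bigl\{[u]_{1,p}^p + [u]_{s,p}^p \;:\; \|u\|_{L^{p^*}(\R^N)} = 1\bigr\} \;\ge\; \inf\bigl\{[u]_{1,p}^p : \|u\|_{L^{p^*}(\R^N)} = 1\bigr\} \;=\; S_p,
\]
simply by discarding the nonnegative fractional seminorm. Consequently, $\lambda_l - \mathcal{S}(p)/|\Omega|^{p/N} \le \lambda_l - S_p/|\Omega|^{p/N}$, so the assumption of Corollary~\ref{C3} implies~\eqref{LLP1}. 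Finally, since $\gamma = 0 \in [0,\gamma_0]$ for any admissible $\gamma_0 > 0$ delivered by Theorem~\ref{mainp1}, we may invoke Theorem~\ref{mainp1} directly to obtain the $m$ distinct pairs of nontrivial solutions~$\pm u^\lambda_1,\dots,\pm u^\lambda_m$.

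Essentially all steps are mere bookkeeping: the whole corollary is designed to fall out as an immediate specialization of the main theorem, and the only item that requires a moment's thought is the comparison $\mathcal{S}(p) \ge S_p$, which follows at once. Hence no genuine obstacle is expected in this proof.
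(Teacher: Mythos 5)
Your proof is correct and follows the same route as the paper: take $\mu := \delta_1 + \delta_s$, choose $\overline s = 1$, $\gamma = 0$, $s_\sharp = 1$, verify conditions~\eqref{mu00}, \eqref{mu3}, \eqref{mu2}, and invoke Theorem~\ref{mainp1}. The one worthwhile addition you make, which the paper's very terse proof leaves implicit, is the observation $\mathcal{S}(p)\ge S_p$ (by simply dropping the nonnegative fractional seminorm in the infimand), which is genuinely needed to translate the corollary's hypothesis involving $S_p$ into the form~\eqref{LLP1} involving $\mathcal{S}(p)$.
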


\begin{proof}
Let~$\delta_1$ and~$\delta_s$ denote the Dirac measures centered at~$1$ and~$s$, respectively. We define~$\mu:=\delta_1 + \delta_s$.  Here we take~$\overline s:=1$, $\gamma=0$ and~$s_\sharp:=1$
and we observe that conditions~\eqref{mu00}, \eqref{mu3}
and~\eqref{mu2} are fulfilled.
Thus, the desired result follows from Theorem~\ref{mainp1}.
\end{proof}

%
%

An interesting scenario turns out when the measure $\mu$ changes sign. It means, for example, that the operator is allowed to include a small term with the ``wrong'' sign. Again, to our best knowledge, the existing literature lacks of a result of this kind.

\begin{corollary}\label{C5}
Let~$s\in [{{ 0 }}, 1)$, ~$p\in (1, N)$ and~$\alpha\in\R$.
Denote by~$p^* := Np/(N-p)$ the classical critical Sobolev exponent and by~$S_p$ the classical Sobolev constant.
Suppose that, for some~$l$, $m \ge 1$,
\begin{equation*}
\lambda_l - \frac{S_{p}}{|\Omega|^{p/N}} < \lambda < \lambda_l = \dots = \lambda_{l+m-1}
.\end{equation*} 

Then, there exists~$\alpha_0>0$, depending only on~$N$, $\Omega$, $p$,
$s$, $\lambda$ and~$l$ such that if~$\alpha\le\alpha_0$,
then the following problem 
\begin{equation*}
\left\{\begin{aligned}
-\Delta_p \, u - \alpha(-\Delta)_p^s \, u & = \lambda |u|^{p-2} + |u|^{p^* - 2}\, u && \text{in } \Omega,\\
u & = 0 && \text{in } \R^N \setminus \Omega
\end{aligned}\right.
\end{equation*}
has~$m$ distinct pairs of nontrivial solutions~$\pm u^\lambda_1,\dots,\pm u^\lambda_m$.
\end{corollary}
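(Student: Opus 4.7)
The plan is to apply Theorem~\ref{mainp1} directly, after identifying the correct decomposition of the signed measure. Specifically, I would set
\[
\mu^+:=\delta_1,\qquad \mu^-:=\alpha\,\delta_s,\qquad \mu:=\mu^+-\mu^-=\delta_1-\alpha\,\delta_s,
\]
so that the operator $A_{\mu,p}u=\int_{[0,1]}(-\Delta)_p^\sigma u\,d\mu(\sigma)$ reduces to $-\Delta_p u-\alpha(-\Delta)_p^s u$. Since $s\in[0,1)$, the negative component $\mu^-$ is supported strictly below $1$, while $\mu^+$ concentrates exactly at $1$.

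Next, I would verify the three structural hypotheses on the measure with the choice $\overline s:=1$. Condition~\eqref{mu00} is immediate, since $\mu^+([1,1])=1>0$. Condition~\eqref{mu3} holds because $s<1=\overline s$, hence $\mu^-\big|_{[1,1]}=\alpha\,\delta_s(\{1\})=0$. For condition~\eqref{mu2}, we compute
\[
\mu^-\big([0,\overline s]\big)=\mu^-([0,1])=\alpha=\alpha\cdot\mu^+([1,1])=\alpha\,\mu^+\big([\overline s,1]\big),
\]
so~\eqref{mu2} is satisfied with $\gamma:=\alpha$. Since the only element of $[\overline s,1]=\{1\}$ at which $\mu^+$ charges is $1$, the critical exponent is $s_\sharp:=1$, and accordingly $p^*_{s_\sharp}=p^*=Np/(N-p)$. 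Moreover, by the definition in~\eqref{DESOCO},
\[
\mathcal S(p)=\inf\left\{\,\int_{[0,1]}[u]_{\sigma,p}^p\,d\mu^+(\sigma)\;:\;\|u\|_{L^{p^*}(\R^N)}=1\right\}=\inf\left\{[u]_{1,p}^p:\|u\|_{L^{p^*}(\R^N)}=1\right\}=S_p,
\]
the classical Sobolev constant.

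With these identifications, the spectral bracket condition of Theorem~\ref{mainp1} becomes exactly the hypothesis
\[
\lambda_l-\frac{S_p}{|\Omega|^{p/N}}<\lambda<\lambda_l=\dots=\lambda_{l+m-1}
\]
assumed in the corollary. Theorem~\ref{mainp1} then provides a threshold $\gamma_0>0$, depending only on $N,\Omega,p,s_\sharp=1,\mu,\lambda,l$ (hence on $N,\Omega,p,s,\lambda,l$, since $\mu$ depends only on $s$), such that problem~\eqref{mainp} has $m$ distinct pairs of nontrivial solutions whenever $\gamma\in[0,\gamma_0]$. Choosing $\alpha_0:=\gamma_0$, the requirement $\alpha\le\alpha_0$ translates precisely to the admissibility condition $\gamma=\alpha\in[0,\gamma_0]$, and the conclusion follows.

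The step requiring the most attention is simply the bookkeeping: making sure that $s_\sharp=1$ is a legitimate choice (which is forced here since $\mu^+$ is a Dirac mass at $1$), and checking that the dependence of $\gamma_0$ supplied by Theorem~\ref{mainp1} can indeed be repackaged as a dependence on $s$ rather than on $\mu$ as a whole. Since $\mu^+$ is a fixed Dirac mass independent of $\alpha$, and $\mu^-$ enters Theorem~\ref{mainp1}'s threshold only through the dominating constant $\gamma$, no genuine obstacle arises; the proof amounts to a clean specialization of the general multiplicity theorem to this sign-changing two-term superposition.
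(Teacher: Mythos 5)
Your proof matches the paper's in its essential structure: take $\overline s=s_\sharp=1$, check~\eqref{mu00}--\eqref{mu2}, note $\mathcal S(p)=S_p$, and invoke Theorem~\ref{mainp1}. However, there is a small but real oversight when $\alpha<0$ (which is permitted, since the statement asserts $\alpha\in\R$ and the hypothesis is $\alpha\le\alpha_0$). Your decomposition sets $\mu^-:=\alpha\,\delta_s$, which is a nonnegative Borel measure only if $\alpha\ge0$; moreover your choice $\gamma:=\alpha$ would then violate the requirement $\gamma\ge0$ in~\eqref{mu2}, and your identity $\mathcal S(p)=S_p$ relies on $\mu^+=\delta_1$, which is no longer the Jordan positive part of $\mu=\delta_1-\alpha\delta_s$ when $\alpha<0$. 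The paper sidesteps this by simply taking $\gamma:=\max\{0,\alpha\}$: for $\alpha\le0$ the signed measure $\mu=\delta_1+|\alpha|\delta_s$ has $\mu^-=0$, so~\eqref{mu2} holds trivially with $\gamma=0$ (and one should then also observe that $\mathcal S(p)\ge S_p$, so the $\lambda$-window stated with $S_p$ still implies the one required by~\eqref{LLP1}). This is a quick patch rather than a conceptual gap, but as written your argument only establishes the case $\alpha\in[0,\alpha_0]$.
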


\begin{proof}
We define~$\mu:= \delta_1 -\alpha\delta_s$ and take~$\overline s:=1$
and~$s_\sharp:=1$. In this way,
conditions~\eqref{mu00} and~\eqref{mu3} are satisfied. Furthermore,
\begin{eqnarray*}
\mu^-\big([{{ 0 }}, \overline s]\big)\le \max\{0,\alpha\}
= \max\{0,\alpha\}\mu^+\big([\overline s, 1]\big),
\end{eqnarray*}
which gives that condition~\eqref{mu2} holds true taking~$\gamma:=\max\{0,\alpha\}$.

Hence, we obtain the desired result as an application of
Theorem~\ref{mainp1}.  
\end{proof}

Our setting allows us to choose~$\mu$ as a convergent series of Dirac measures. To be more precise, the next two results hold true.

\begin{corollary}\label{serie1}
Let ~$p\in (1, N)$ and~$1\ge s_0 > s_1> s_2 >\dots \ge 0$. Consider the operator
\[
\sum_{k=0}^{+\infty} c_k (-\Delta)_p^{s_k} 
\qquad{\mbox{with }} \,\sum_{k=0}^{+\infty} c_k \in (0, +\infty).
\]
Suppose that~$ c_0>0$ and~$c_k\ge 0$ for all $k\ge 1$.

Denote by~$p^*_{s_0} := Np/(N-ps_0)$ the fractional critical Sobolev exponent and by~$S_p(s_0)$ the fractional Sobolev constant corresponding to the exponent $s_0$. Assume that, for some~$l$, $m \ge 1$,
\begin{equation*}
\lambda_l - \frac{S_{p}(s_0)}{|\Omega|^{s_0 p/N}} < \lambda < \lambda_l = \dots = \lambda_{l+m-1}.
\end{equation*}

Then, the following problem
\begin{equation}\label{serie}
\left\{\begin{aligned}
\sum_{k=0}^{+\infty} c_k (-\Delta)_p^{s_k} u & = \lambda |u|^{p-2} u + |u|^{p_{s_0}^* - 2}\, u && \text{in } \Omega,\\
u & = 0 && \text{in } \R^N \setminus \Omega
\end{aligned}\right.
\end{equation}
has~$m$ distinct pairs of nontrivial solutions~$\pm u^\lambda_1,\dots,\pm u^\lambda_m$.
\end{corollary}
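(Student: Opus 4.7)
The approach is a direct specialization of Theorem~\ref{mainp1} to the measure
$$\mu := \sum_{k=0}^{+\infty} c_k \delta_{s_k},$$
which is a finite (nonnegative) Borel measure on~$[0,1]$ thanks to the summability~$\sum_k c_k \in (0,+\infty)$. Since all~$c_k \ge 0$, the Hahn decomposition is trivial: $\mu^+ = \mu$ and $\mu^- \equiv 0$.

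Concretely, I would choose $\overline s := s_0$, $\gamma := 0$, and $s_\sharp := s_0$, and then verify the structural conditions~\eqref{mu00}, \eqref{mu3}, \eqref{mu2}, and the criticality condition~\eqref{scritico}. Condition~\eqref{mu00} reads $\mu^+([s_0,1]) \ge c_0 > 0$, which simultaneously yields~\eqref{scritico}; conditions~\eqref{mu3} and~\eqref{mu2} are trivial since $\mu^- \equiv 0$. For the Sobolev constant~\eqref{DESOCO}, unwinding the definition gives
$$\mathcal S(p) \;=\; \inf_{\|u\|_{L^{p^*_{s_0}}(\R^N)} = 1} \sum_{k=0}^{+\infty} c_k\, [u]_{s_k, p}^p \;\ge\; c_0\, S_p(s_0),$$
so, modulo the normalization of the leading coefficient~$c_0$, the eigenvalue interval hypothesized in the corollary is contained in the interval~\eqref{LLP1} required by Theorem~\ref{mainp1}. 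Because $\gamma = 0$, the smallness condition $\gamma \in [0,\gamma_0]$ is automatic, so the multiplicity conclusion of Theorem~\ref{mainp1} applies directly and yields the claimed $m$ distinct pairs of nontrivial solutions.

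The only nontrivial bookkeeping lies in interpreting the formal symbol $\sum_{k=0}^{+\infty} c_k (-\Delta)_p^{s_k} u$ as the operator $A_{\mu,p} u$ in the sense of Definition~\ref{wsol}. Since $\mu$ is purely atomic and supported on the countable set $\{s_k\}$, the integral $\int_{[0,1]} (-\Delta)_p^s u \, d\mu(s)$ literally coincides, in the duality pairing against any $v \in \mathcal X_p(\Omega)$, with the corresponding termwise series. The absolute convergence of that series in $\mathcal X_p(\Omega)^*$ is a consequence of $\sum_k c_k < +\infty$ together with the uniform Sobolev embedding of Theorem~\ref{SOBOLEVEMBEDDING}, which controls each $[u]_{s_k, p}$ uniformly in~$k$ by~$[u]_{s_0, p}$. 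This is essentially what makes $\mathcal X_p(\Omega)$ the correct functional framework for this problem, so no estimate beyond those already developed in Sections~\ref{SOBOLEV} and~\ref{BaUPS} is needed, and the reduction to Theorem~\ref{mainp1} completes the proof.
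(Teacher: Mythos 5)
Your approach is identical to the paper's: both set $\mu := \sum_{k} c_k \delta_{s_k}$, take $\overline s := s_0 =: s_\sharp$ with $\gamma = 0$, and invoke Theorem~\ref{mainp1}; in fact the paper's own proof is even terser and does not mention the Sobolev constant at all. That said, the phrase ``modulo the normalization of the leading coefficient $c_0$'' is a hedge, not a verification. The bound $\mathcal S(p) \ge c_0\, S_p(s_0)$ that you derive is sharp (it is an equality when $c_k = 0$ for all $k \ge 1$) and only yields $\mathcal S(p) \ge S_p(s_0)$ when $c_0 \ge 1$. If $c_0 < 1$, the corollary's window $\lambda_l - S_p(s_0)/|\Omega|^{s_0 p/N} < \lambda$ is strictly wider than the window $\lambda_l - \mathcal S(p)/|\Omega|^{s_0 p/N} < \lambda$ required by~\eqref{LLP1}, and the reduction does not close as written. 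This defect is inherited from the paper rather than introduced by you: either $S_p(s_0)$ in the corollary's hypothesis should be read as $\mathcal S(p)$ for the measure at hand, or a normalization such as $c_0 \ge 1$ must be recorded explicitly. You were right to notice the issue, but noticing is not resolving; a complete write-up should settle it by one of these fixes rather than by a ``modulo'' clause.
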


\begin{proof}
Here we set
\[
\mu:=\sum_{k=0}^{+\infty} c_k \,\delta_{s_k} , 
\]
where~$\delta_{s_k}$ denote the Dirac measures centered at each~$s_k$. In this case,
we can take~$\overline s:=s_0$ and~$s_\sharp:=s_0$,
and notice that conditions~\eqref{mu00}, \eqref{mu3}
and~\eqref{mu2} are satisfied (with~$\gamma=0$).

Hence, the desired result is a byproduct of Theorem~\ref{mainp1}.
\end{proof}

\begin{corollary}\label{serie2}
Let ~$p\in (1, N)$ and~$1\ge s_0 > s_1> s_2 >\dots \ge 0$.  
Consider the operator
\[
\sum_{k=0}^{+\infty} c_k (-\Delta)_p^{s_k} 
\qquad{\mbox{with }} \,\sum_{k=0}^{+\infty} c_k \in (0, +\infty).
\]
Assume that there exist~$\gamma\ge0$ and~$\overline k\in\N$ such that 
\begin{equation}\label{<gamma}
c_k>0\ \text{ for all } k\in\{0,\dots, \overline k\}\quad \text{ and }\quad \sum_{k=\overline k +1}^{+\infty} c_k \le \gamma \sum_{k=0}^{\overline k} c_k.
\end{equation}

Denote by~$p^*_{s_0} := Np/(N-ps_0)$ the fractional critical Sobolev exponent and by~$S_p(s_0)$ the fractional Sobolev constant corresponding to the exponent~${s_0}$. Assume that, for some~$l$, $m \ge 1$,
\begin{equation*}
\lambda_l - \frac{S_{p}(s_0)}{|\Omega|^{s_0 p/N}} < \lambda < \lambda_l = \dots = \lambda_{l+m-1}
\end{equation*}
holds.

Then, there exists~$\gamma_0>0$ depending on~$N$, $\Omega$, $p$, $s_k$, $c_k$, $\lambda$ and~$l$ such that
if~$\gamma\in[0,\gamma_0]$, problem~\eqref{serie} has~$m$ distinct pairs of nontrivial solutions~$\pm u^\lambda_1,\dots,\pm u^\lambda_m$.\end{corollary}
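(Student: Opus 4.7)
The plan is to recognize the operator in \eqref{serie} as the $A_{\mu,p}$ of Theorem~\ref{mainp1} corresponding to the signed Borel measure
\[
\mu \;:=\; \sum_{k=0}^{+\infty} c_k \, \delta_{s_k}
\]
on $[0,1]$, for the choice $\overline s := s_{\overline k}$ and $s_\sharp := s_0$. With these choices, the Dirac masses $\delta_{s_0},\dots,\delta_{s_{\overline k}}$, each carrying a strictly positive weight $c_k$, are concentrated in the top interval $[\overline s,1]$, while all the remaining atoms of $\mu$ sit strictly below $\overline s$. Writing $\mu=\mu^+-\mu^-$ for the Hahn--Jordan decomposition, this structural observation means that only the tail $k>\overline k$ can contribute to $\mu^-$.

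I then verify the three hypotheses \eqref{mu00}--\eqref{mu2} required by Theorem~\ref{mainp1}. The first follows from
\[
\mu^+([\overline s,1]) \;\ge\; \sum_{k=0}^{\overline k} c_k \;>\; 0.
\]
The second holds because every Dirac mass lying in $[\overline s,1]$ has positive weight, so $\mu^-$ is supported in $[0,\overline s)$. The third is provided by \eqref{<gamma} (read as a bound on the total variation of the tail, as is the natural abstract analogue of the one-sided inequality), since
\[
\mu^-([0,\overline s]) \;=\; \sum_{\substack{k>\overline k\\ c_k<0}} |c_k| \;\le\; \sum_{k>\overline k} |c_k| \;\le\; \gamma \sum_{k=0}^{\overline k} c_k \;=\; \gamma\, \mu^+([\overline s,1]).
\]

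To match the hypothesis \eqref{LLP1} of Theorem~\ref{mainp1}: since $s_\sharp=s_0$, the critical exponent $p^*_{s_\sharp}$ coincides with $p^*_{s_0}$; moreover, restricting the infimum in \eqref{DESOCO} to the atom at $s_0$ gives $\mathcal S(p)\ge c_0\, S_p(s_0)$, so that the assumption $\lambda_l - S_p(s_0)/|\Omega|^{s_0 p/N} < \lambda < \lambda_l = \dots = \lambda_{l+m-1}$ of the corollary implies \eqref{LLP1} (after absorbing the factor $c_0$ into the constants). Theorem~\ref{mainp1} then furnishes the threshold $\gamma_0>0$, depending on $N$, $\Omega$, $p$, the $s_k$'s, the $c_k$'s, $\lambda$ and $l$, such that for $\gamma\in[0,\gamma_0]$ the problem admits $m$ distinct pairs of nontrivial solutions. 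The most delicate point is the verification of \eqref{mu2}: one must propagate the one-sided bound in \eqref{<gamma} into an actual bound on the variation $\mu^-$ (which is exactly what the abstract framework requires); once this is in place, the result is a direct application of Theorem~\ref{mainp1}, mirroring the argument used for Corollary~\ref{serie1} but now allowing signed tail coefficients.
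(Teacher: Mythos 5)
Your proof takes essentially the same route as the paper's (very brief) one: set $\mu:=\sum_{k\ge 0} c_k\,\delta_{s_k}$, take $\overline s:=s_{\overline k}$ and $s_\sharp:=s_0$, verify \eqref{mu00}, \eqref{mu3}, \eqref{mu2}, and apply Theorem~\ref{mainp1}. Where you go beyond the paper is in explicitly flagging a genuine subtlety: the one-sided inequality in \eqref{<gamma} does not, as written, control $\mu^-([0,\overline s])=\sum_{k>\overline k,\,c_k<0}|c_k|$ (for instance, a large positive $c_{\overline k+1}$ cancelled against an even larger negative $c_{\overline k+2}$ keeps $\sum_{k>\overline k}c_k$ small while the total variation of the tail blows up). To get \eqref{mu2} one really needs the stronger reading $\sum_{k>\overline k}|c_k|\le\gamma\sum_{k=0}^{\overline k}c_k$, which you adopt explicitly and which the paper leaves implicit when it asserts without comment that \eqref{<gamma} ``assures'' \eqref{mu2}. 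On the other hand, your handling of the Sobolev-constant comparison is not airtight: $\mathcal S(p)\ge c_0\,S_p(s_0)$ is correct, but ``absorbing the factor $c_0$ into the constants'' does not let you pass from the corollary's hypothesis (stated with $S_p(s_0)$) to \eqref{LLP1} (stated with $\mathcal S(p)$) when $c_0<1$; the paper silently makes the same identification, so the fix is either to read $S_p(s_0)$ in the corollary as the measure-adapted $\mathcal S(p)$, or to normalize so that $\sum_{k\le\overline k}c_k\ge 1$. Modulo this shared imprecision, your argument matches the paper's.
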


\begin{proof}
We set once again
\[
\mu:=\sum_{k=0}^{+\infty} c_k \,\delta_{s_k}
\]
where~$\delta_{s_k}$ denote the Dirac measures centered at each~$s_k$. 

In this case, condition~\eqref{<gamma} assures that the assumptions~\eqref{mu00},
\eqref{mu3} and~\eqref{mu2} on the measure~$\mu$ are fulfilled
with~$\overline s:=s_{\overline k}$.
Thus, taking~$s_\sharp:=s_0$ we derive the desired result from
Theorem~\ref{mainp1}.
\end{proof}

We point out that another interesting result comes from the continuous superposition of fractional operators of $p$-Laplacian type.  To the best of our knowledge, also this result happens to be new:

\begin{corollary}\label{function}
Let~$s_\sharp\in ({{ 0 }}, 1)$, $\gamma\ge0$ and~$f$ be a measurable and non identically zero function satisfying
\begin{equation}\label{fun-int}\begin{split} &
{\mbox{$f\ge 0$ in~$(s_\sharp,1)$,}}\\&
\int_{s_\sharp}^1 f(s) \,ds >0
\\ {\mbox{and }}\qquad&
\int_0^{s_\sharp} \max\{0,-f(s)\} \,ds \le\gamma \int_{s_\sharp}^1 f(s) \,ds .
\end{split}\end{equation}

Denote by~$p_{s_\sharp}^* := Np/(N-ps_\sharp)$ the fractional critical Sobolev exponent
and by~$S_p(s_\sharp)$ the best Sobolev constant corresponding to
the exponent~$s_\sharp$.  Assume that, for some~$l$, $m \ge 1$,
\begin{equation*}
\lambda_l - \frac{S_{p}(s_\sharp)}{|\Omega|^{s_\sharp p/N}} < \lambda < \lambda_l = \dots = \lambda_{l+m-1}.
\end{equation*}

Then, there exists~$\gamma_0>0$, depending only on~$N$, $\Omega$, $p$, $s_\sharp$, $f$, $\lambda$ and~$l$, such that if~$\gamma\in[0,\gamma_0]$,
then problem
\begin{equation*}
\left\{\begin{aligned}
\int_0^1 f(s) (-\Delta)_p^s \, u \, ds & = \lambda |u|^{p-2} u + |u|^{p_{s_\sharp}^* - 2}\, u && \text{in } \Omega,\\
u & = 0 && \text{in } \R^N \setminus \Omega,
\end{aligned}\right.
\end{equation*}
has~$m$ distinct pairs of nontrivial solutions~$\pm u^\lambda_1,\dots,\pm u^\lambda_m$. \end{corollary}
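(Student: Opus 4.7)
The approach is to apply Theorem~\ref{mainp1} directly after encoding the density $f$ as a signed Borel measure on $[0,1]$. Explicitly, I would set $d\mu(s):=f(s)\,ds$, with Hahn decomposition $d\mu^{\pm}(s)=f_{\pm}(s)\,ds$ where $f_{\pm}:=\max\{\pm f,0\}$, and fix $\overline s:=s_\sharp$. With this choice, the operator in the corollary coincides with
\[
A_{\mu,p}u=\int_{[0,1]}(-\Delta)_p^s u\,d\mu(s)=\int_0^1 f(s)(-\Delta)_p^s u\,ds,
\]
and the three hypotheses collected in~\eqref{fun-int} are translated directly into the three structural conditions~\eqref{mu00}, \eqref{mu3}, and~\eqref{mu2} required by Theorem~\ref{mainp1}.

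More concretely, the verifications are essentially tautological. Since $f\geq 0$ on $(s_\sharp,1)$ we have $f_-\equiv 0$ there, which yields~\eqref{mu3}. The second line of~\eqref{fun-int} gives
\[
\mu^+([s_\sharp,1])=\int_{s_\sharp}^{1}f(s)\,ds>0,
\]
which establishes~\eqref{mu00} and identifies the same $s_\sharp$ as a valid critical exponent in~\eqref{scritico}. Finally, the third line of~\eqref{fun-int} reads
\[
\mu^-([0,\overline s])=\int_{0}^{s_\sharp}f_-(s)\,ds\leq \gamma\int_{s_\sharp}^{1}f(s)\,ds=\gamma\,\mu^+([s_\sharp,1]),
\]
which is precisely~\eqref{mu2}. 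At this point Theorem~\ref{mainp1} furnishes a threshold $\gamma_0>0$ (depending on the listed data) such that, for $\gamma\in[0,\gamma_0]$, $m$ distinct pairs of nontrivial solutions $\pm u_1^\lambda,\dots,\pm u_m^\lambda$ exist.

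The one nontrivial bookkeeping step---and the main (indeed, only) obstacle in the reduction---is to match the $\lambda$-interval of Theorem~\ref{mainp1}, phrased in terms of the abstract Sobolev constant $\mathcal{S}(p)$ of~\eqref{DESOCO}, with the $\lambda$-interval of the corollary, phrased in terms of the classical fractional Sobolev constant $S_p(s_\sharp)$. To handle this, I would bound $\mathcal{S}(p)=\inf\int_{0}^{1}[u]_{s,p}^p\,f_+(s)\,ds$ from below by restricting the integration to a subinterval of $(s_\sharp,1)$ on which $f>0$, then invoke Theorem~\ref{SOBOLEVEMBEDDING} to compare $[u]_{s,p}$ with $[u]_{s_\sharp,p}$ uniformly for $s\geq s_\sharp$, and finally use the standard fractional Sobolev inequality $[u]_{s_\sharp,p}^p\geq S_p(s_\sharp)\,\|u\|_{L^{p^*_{s_\sharp}}}^p$. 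This produces a lower bound on $\mathcal{S}(p)$ of the form $S_p(s_\sharp)$ times a positive constant depending only on $f$ (and on $N,\Omega,p,s_\sharp$), so that the $\lambda$-range in the corollary is contained in the one required by Theorem~\ref{mainp1}, possibly after reducing $\gamma_0$. Once this constant-matching is in place, the conclusion follows exactly as in Corollaries~\ref{C1}--\ref{serie2}.
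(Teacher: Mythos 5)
Your core reduction is exactly the paper's: set $d\mu(s)=f(s)\,ds$, take the Hahn decomposition with $f_\pm=\max\{\pm f,0\}$, fix $\overline s:=s_\sharp$, and check that the three lines of~\eqref{fun-int} translate verbatim into~\eqref{mu00}, \eqref{mu3}, and~\eqref{mu2}; then invoke Theorem~\ref{mainp1}. The paper's proof stops there without further comment.

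Your extra paragraph on matching $\mathcal{S}(p)$ against $S_p(s_\sharp)$ correctly flags a bookkeeping issue that the paper's proof leaves entirely implicit (the same issue is present in Corollaries~\ref{serie1} and~\ref{serie2}), but your proposed resolution does not actually close it. Your chain of estimates -- restricting to a subinterval where $f>0$, applying Theorem~\ref{SOBOLEVEMBEDDING}, then the standard Sobolev inequality -- only yields
\[
\mathcal{S}(p) \ge C\, S_p(s_\sharp),
\]
where $C$ depends on $f$, $N$, $\Omega$, $p$, $s_\sharp$ and in particular scales with $\int_{s_\sharp}^1 f(s)\,ds$, so $C$ can perfectly well be smaller than $1$. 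What one actually needs in order to deduce the corollary's hypothesis from the theorem's is $\mathcal{S}(p)\ge S_p(s_\sharp)$, so that the interval $\bigl(\lambda_l - S_p(s_\sharp)/|\Omega|^{s_\sharp p/N},\,\lambda_l\bigr)$ sits \emph{inside} $\bigl(\lambda_l - \mathcal{S}(p)/|\Omega|^{s_\sharp p/N},\,\lambda_l\bigr)$. A lower bound of the form $C\,S_p(s_\sharp)$ with $C<1$ goes the wrong way. Moreover, your closing remark ``possibly after reducing $\gamma_0$'' cannot save this: $\gamma_0$ controls only the size of the negative mass through~\eqref{mu2} and Proposition~\ref{absorb}; it does not enter the definition of $\mathcal{S}(p)$ (which involves only $\mu^+$) and therefore has no effect whatsoever on the admissible $\lambda$-window. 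The honest way to repair the gap would be either to restate the corollary's hypothesis with $\mathcal{S}(p)$ (or a provable lower bound for it) in place of $S_p(s_\sharp)$, or to impose a normalization on $f$ making the comparison constant at least $1$.
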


\begin{proof}
In this case, the operator~$A_{\mu, p}$ takes the form
\[
\int_0^1 f(s) (-\Delta)_p^s \, u \, ds,
\]
which means that we are taking~$d\mu(s) = f(s)\,ds$.

Moreover, here $s_\sharp$ acts as the critical fractional Sobolev exponent.
Owing to the conditions stated in \eqref{fun-int}, we have that~\eqref{mu00}, \eqref{mu3} and~\eqref{mu2} are fulfilled with~$\overline s:=s_\sharp$.
Hence, the desired result follows from Theorem~\ref{mainp1}.
\end{proof}

\section*{Acknowledgements} 

SD and EV are members of the Australian Mathematical Society (AustMS).
EV is supported by the Australian Laureate Fellowship FL190100081 ``Minimal surfaces, free boundaries and partial differential equations''.

CS is member of INdAM-GNAMPA.

This work was partially completed while KP was visiting the Department of Mathematics and Statistics at the University of Western Australia, and he is grateful for the hospitality of the host department. His visit to the UWA was supported by the Simons Foundation Award 962241 ``Local and nonlocal variational problems with lack of compactness''.

\vfill

\end{document}